\newcommand{\defeq}{\stackrel{{\text{def}}}{=}} 
\newcommand{\pp}[2]{\frac{\partial #1}{\partial #2}} 
\newcommand{\ppp}[2]{\frac{\partial^2 #1}{\partial #2^2}} 
\newcommand{\pppp}[3]{\frac{\partial^2 #1}{\partial #2 \partial #3}} 
\newcommand{\jump}[1]{\left[#1\right]} 
\newcommand{\Landau}{\mathcal{O}} 
\newcommand{\cin}{{c_{\textrm{in}}}} 
\newcommand{\cout}{{c_{\textrm{out}}}} 
\newcommand{\lot}{\textrm{LOT}} 
\newcommand{\ls}[1]{\overset{\scriptscriptstyle{1}}{#1}} 
\newcommand{\rs}[1]{\overset{\scriptscriptstyle{2}}{#1}} 
\newcommand{\is}[1]{\overset{\scriptscriptstyle{i}}{#1}} 
\newtheorem{proposition}{Proposition}
\newtheorem{theorem}{Theorem}
\begin{document}

\title{Shock Interaction in Plane Symmetry}
\date\today
\author[A.~Lisibach]{Andr\'e Lisibach}
\address{Andr\'e Lisibach\\Bern University of Applied Sciences}
\email{andre.lisibach@bfh.ch}



\begin{abstract}
  We consider the problem of interaction of two oncoming shocks in plane symmetry for a barotropic fluid. We establish a local in time solution after the point of interaction, thereby determining the state behind the emerged shocks which originate at the interaction point. The location of these shocks in space time being unknown the problem constitutes a double free boundary problem, i.e.~complete data for the problem is only given in a single point, the interaction point. This article can be viewed as an extension of the article \cite{lisibach2021shock} which deals with the reflection of a shock on a wall.
\end{abstract}

\maketitle

\tableofcontents

\section{Introduction}
The interaction of two oncoming shocks is a classic problem in gas dynamics. We look at the situation in plane symmetry, i.e.~the quantities describing the fluid depend only on one space variable. A typical application is gas flow in a tube. The setup consists of two shock waves traveling towards each other. The domain between the two shocks shrinks to a point when the shocks collide. After the collision, two shocks emerge which move away from each other, leaving a growing zone behind them.

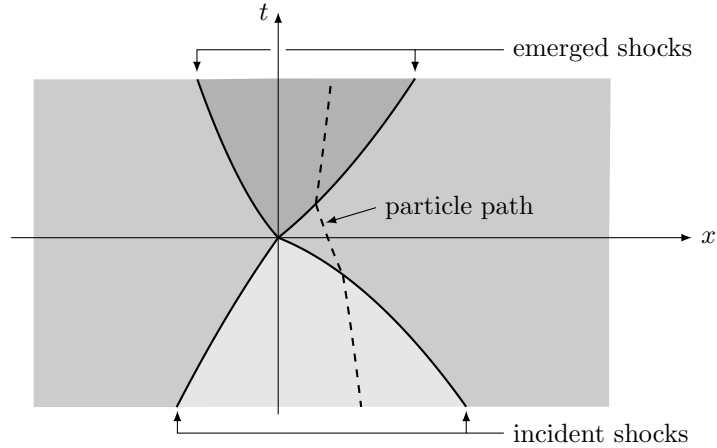
\begin{figure}[h]
  \centering
  \begin{tikzpicture}[scale=1]
  \fill [fill=gray!20, domain=0:2.5, variable=\x] (0,0) -- plot (\x,{-.4*\x-0.2*\x^2}) -- (0,-2.25) -- cycle;
  \fill [fill=gray!20, domain=0:1.35, variable=\x] (0,0) -- plot (-\x,{-1.4*\x-0.2*\x^2}) -- (0,-2.25) -- cycle;
  \fill [fill=gray!40, domain=0:2.5, variable=\x] (0,0) -- plot (\x,{-.4*\x-0.2*\x^2}) -- ++(1.9,0)-- (4.4,0) -- cycle;
  \fill [fill=gray!40, domain=0:1.35, variable=\x] (0,0) -- plot (-\x,{-1.4*\x-0.2*\x^2}) -- ++(-1.9,0)-- (-3.25,0) -- cycle;
  \fill [fill=gray!40, domain=0:1.82, variable=\x] (0,0) -- plot (\x,{.8*\x+0.2*\x^2}) -- ++(2.6,0)-- (4.4,0) -- cycle;
  \fill [fill=gray!40, domain=0:1.075, variable=\x] (0,0) -- plot (-\x,{1.1*\x+0.8*\x^2}) -- ++(-2.175,0)-- (-3.25,0) -- cycle;
  \fill [fill=gray!60, domain=0:1.82, variable=\x] (0,0) -- plot (\x,{.8*\x+0.2*\x^2}) -- (0,2.11848) -- cycle;
  \fill [fill=gray!60, domain=0:1.075, variable=\x] (0,0) -- plot (-\x,{1.1*\x+0.8*\x^2}) -- (0,2.11848) -- cycle;

  \draw[thick,domain=0:1.82,smooth,variable=\x] plot (\x,{.8*\x+0.2*\x^2});
  \draw[thick,domain=0:2.5,smooth,variable=\x] plot (\x,{-.4*\x-0.2*\x^2});

  \draw[thick,domain=0:1.075,smooth,variable=\x] plot (-\x,{1.1*\x+0.8*\x^2});
  \draw[thick,domain=0:1.35,smooth,variable=\x] plot (-\x,{-1.4*\x-0.2*\x^2});
  
  \draw[-latex] (-3.55,0) -- (5.5,0) node[anchor=west] {$x$};
  \draw[-latex] (0,-2.35) -- (0,3) node[anchor=east] {$t$};


  \draw[latex-](-1.08,2.2)--(-1.08,2.5)--(-0.1,2.5);
  \draw(.1,2.5)--(1.82,2.5);
  \draw[latex-](1.82,2.2)--(1.82,2.5)--(3,2.5)node[anchor=west]{emerged shocks};

  \draw[latex-](-1.34,-2.3)--(-1.34,-2.6)--(0.1,-2.6);
  \draw(.1,-2.6)--(2.5,-2.6);
  \draw[latex-](2.5,-2.3)--(2.5,-2.6)--(3,-2.6)node[anchor=west]{incident shocks};

   \begin{scope}[xshift=0.5cm,yshift=0.45cm]
     \draw[thick,dashed,domain=0:.208,smooth,variable=\x] plot (\x,{7*\x+5*\x^2});
     \draw[thick,dashed,domain=0:.355,smooth,variable=\x] plot (\x,{-3*\x+1*\x^2});    
   \end{scope}
   \begin{scope}[xshift=0.86cm,yshift=-0.5cm]
     \draw[thick,dashed,domain=0:.243,smooth,variable=\x] plot (\x,{-6*\x-5*\x^2});    
   \end{scope}
   \draw[latex-](.63,.2)--(1.3,.4)node[anchor=west] {particle path};
  
\end{tikzpicture}

\caption{Shock Interaction, depicting the incident and emerged shocks.}
\end{figure}
If in the states between and behind the incident shocks the gas possesses constant velocity and constant density, the situation can be mathematically represented by piecewise constant solutions of the corresponding differential equations, satisfying the shock conditions across the incident and emerging shocks. See for example the treatment in \cite{courant}. From a point of view of applications, this simple case is of great importance since it represents quite accurately the physical situation in a small neighborhood of an interaction point. However, from a mathematical point of view the existence of a solution for more general conditions is desirable.

We start our mathematical treatment of the problem at the time of interaction ($t=0$) and give two sets of data. Namely one for $x\leq 0$ and one for $x\geq 0$. These data sets correspond to the two states behind the incident shocks at $t=0$ which at the same time are to be the states ahead of the emerged shocks. Thereby we exclude any discussion of the formation, propagation and the impingement of the two incident shocks.
In any case, one can think of the incident shocks as being generated by two moving pistons (see \cite{courant}) and due to the formation and development results (see \cite{riemann}, \cite{ChristodoulouLisibach}) the setting up to $t=0$ can be rigorously established.

We assume that the two given data sets for $x\leq 0$ and for $x\geq 0$ are incompatible at $x=0$, i.e.~we assume that they do not correspond to the same physical state at the point of interaction. Instead we assume that the two data sets at the interaction point, through the jump conditions, yield shock speeds of the emerged shocks which are subsonic relative to the state behind (which is the state between the emerged shocks) and are supersonic relative to the respective states ahead which are given by the two data sets at the interaction point. These two conditions, while mathematically necessary for shocks to be determined in an evolutionary sense (see \cite{riemann}), correspond also to the only shocks observed naturally. In the following we call these conditions collectively the determinism condition. By an appropriate change of reference frame we can assume that the given data sets correspond to vanishing velocity in the state behind the emerged shocks at the interaction point.

Our assumptions consider the data in only one point, the interaction point. Therefore, our assumptions guarantee that the data at the interaction point yields a physical solution to the interaction problem in one point. The existence of such data is therefore justified exactly by the existence of the solution in the case where all the fluid states correspond to constant states as described above.

In addition to the given data we assume the existence of future developments corresponding to future domains of dependence of the data sets for $x\leq 0$ and $x\geq 0$ respectively. See figure \ref{future_development} on page \pageref{future_development}.

The present article can be viewed as an extension of \cite{lisibach2021shock} in which the reflection of a shock is treated. The major difference is that instead of a wall being present, the state in the future of the interaction point is bounded to the left also by a shock. As a consequence the boundary condition along the wall used in \cite{lisibach2021shock} is replaced by the boundary and jump conditions along the left moving shock. These conditions have been already introduced in \cite{lisibach2021shock} as the conditions along the reflected shock and are now used twice along the two emerging shocks. Since the left moving shock is represented, relative to the state behind, by a subsonic curve, the same geometric construction is used to set up the coordinate system but now placing the left moving shock at $u=v$. This set up of the characteristic coordinates, while being asymmetrical (the right moving shock being given in these coordinates by $u=av$), has the advantage to simplify the boundary and jump conditions along the left moving shock and at the same time be close to the presentation in \cite{lisibach2021shock}. While in \cite{lisibach2021shock}, to eliminate $t$ from the characteristic system and to work with the quantity $x$, was the natural choice due to the boundary condition along the wall, in the present article it is merely a choice in order for the presentation to be close to \cite{lisibach2021shock}.

Even though the present article is self contained, we recommend to read \cite{lisibach2021shock} first because it contains already many of the core ideas used in the present article but in a more elementary case.

\medskip

The present problem has been treated in a different way in \cite{LiTaTsin1}, \cite{LiTaTsin2}. While in these works the authors treat a more general set of equations, the proof is in stages of increasing difficulty, dealing with a linear system and fixed boundaries first and then extending the results to a nonlinear system and free boundaries. During this process fewer and fewer details of the proof are given. Also, the actual states ahead of the shocks are taken into account after the local existence has been established. Our approach and presentation gives additional insight into the problem and opens up the road for future progress in more involved situations.

\section{Notation\label{notation}}
We devote this section to a summary of the notation used, in order for the reader to have a single place to browse back to and look things up. This section is not supposed to introduce the notation, as usual the notation will be introduced step by step in the arguments to follow.

\begin{enumerate}
\item A $\ast$ is used to denote functions which correspond to the developments of initial data, i.e functions which describe the states ahead of the emerged shocks. These functions are given in terms of the coordinates $t$ and $x$.
\item Everything which is related to the left moving shock is denoted by a stacked index 1 and everything which is related to the right moving shock is denoted by a stacked index $2$. E.g.
  \begin{align}
    \label{eq:1}
    \ls{\alpha}^\ast(t,x)
  \end{align}
denotes the Riemann invariant $\alpha$ in the state ahead of the left moving shock.
\item In the state between the emerged shocks, which is the state behind these shocks, characteristic coordinates $u$ and $v$ are used (see \ref{u-v-coordinates}). E.g.
  \begin{align}
    \label{eq:2}
    \alpha(u,v)
  \end{align}
denotes the Riemann invariant $\alpha$ in the state behind the shocks.
\item Quantities evaluated along the shocks in the state behind are denoted by an index $+$, quantities evaluated along the shocks in the states ahead are denoted by an index $-$. E.g.
  \begin{align}
    \label{eq:3}
    \rs{\alpha}_+(v)&=\alpha(av,v),\\
    \rs{\alpha}_-(v)&=\rs{\alpha}^\ast(\rs{t}_+(v),\rs{x}_+(v))\label{eq:4}
  \end{align}
denote the Riemann invariant $\alpha$ along the right moving shock in the states behind and ahead respectively.
\end{enumerate}
For a compilation of the notation see figure \ref{notation_figure}.
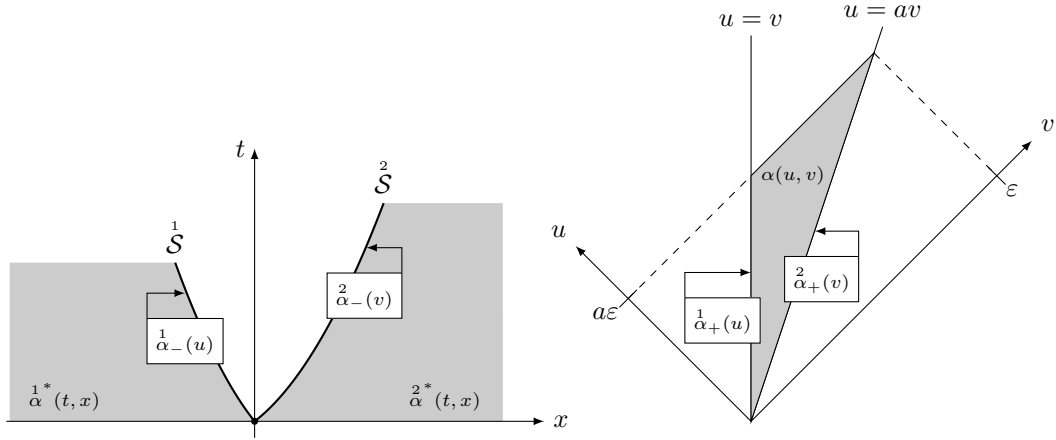
\begin{figure}[h!]
  \centering
  \begin{tikzpicture}[scale=1.1]
  
  \fill [fill=gray!40, domain=0:.6, variable=\x] (0,0) -- plot (-1.6*\x,{2*\x+2*\x^2}) -- ++(-2,0)-- (-2.96,0) -- cycle;
  \fill [fill=gray!40, domain=0:1.2, variable=\x] (0,0) -- plot (1.3*\x,{1*\x+1*\x^2}) -- ++(1.44,0)-- (3,0) -- cycle;
  \draw[thick,domain=0:.6,smooth,variable=\x] plot (-1.6*\x,{2*\x+2*\x^2})node[anchor=south]{$\overset{\scriptscriptstyle{1}}{\mathcal{S}}$};
  \draw[thick,domain=0:1.2,smooth,variable=\x] plot (1.3*\x,{1*\x+1*\x^2})node[anchor=south]{$\overset{\scriptscriptstyle{2}}{\mathcal{S}}$};
  \draw[-latex] (-3,0) -- (3.5,0) node[anchor=west] {$x$};
  \draw[-latex] (0,-.2) -- (0,3.3) node[anchor=east] {$t$};
  \fill (0,0) circle[radius=1.2pt];
  \node at (2.3,.3){\scriptsize$\rs{\alpha}^\ast(t,x)$};
  \node at (-2.3,.3){\scriptsize$\ls{\alpha}^\ast(t,x)$};
  \draw[-latex](1.78,1.8)node[xshift=0.2pt,rectangle,fill=white,draw,anchor=north east]{\scriptsize$\rs{\alpha}_-(v)$}--(1.78,2.1)--(1.35,2.1);
  \draw[-latex](-1.3,1.25)node[xshift=-0.2pt,rectangle,fill=white,draw,anchor=north west]{\scriptsize$\ls{\alpha}_-(u)$}--(-1.3,1.55)--(-.84,1.55);

  \begin{scope}[xshift=6cm,rotate=45,scale=3]
    \draw[-latex](0,0)--(1.6,0)node[anchor=south west]{$v$};
    \draw[-latex](0,0)--(0,1)node[anchor=south east]{$u$};
    \draw(0,0)--(1.1,1.1)node[anchor=south]{$u=v$};
    \draw(0,0)--(1.5,.75)node[anchor=south]{$u=av$};
    \draw[fill=gray!40](0,0)--(1.4,.7)--(0.7,0.7)--cycle;
    \node at(0,.7)[anchor=north east]{$a\varepsilon$};
    \draw[dashed](0,.7)--(.7,.7);
    \draw (-.05,.7)--(.05,.7);
    \draw[dashed](1.4,.7)--(1.4,0);
    \draw (1.4,.05)--(1.4,-.05);
    \node at(1.4,0)[anchor=north west]{$\varepsilon$};
    \node at(.82,0.58){\scriptsize$\alpha(u,v)$};
  \end{scope}
  \begin{scope}[xshift=6cm]
    \draw[-latex](1.3,2)node[xshift=0.2pt,rectangle,fill=white,draw,anchor=north east]{\scriptsize$\rs{\alpha}_+(v)$}--(1.3,2.3)--(.77,2.3);
    \draw[-latex](-.8,1.5)node[xshift=-0.2pt,rectangle,fill=white,draw,anchor=north west]{\scriptsize$\ls{\alpha}_+(u)$}--(-.8,1.8)--(0,1.8);
    
  \end{scope}

\end{tikzpicture}

  \caption{Summary of Notation. The states ahead on the left and the state behind on the right. Quantities in boxes denote evaluation along the shocks}
  \label{notation_figure}
\end{figure}

\section{Equations of Motion}
\subsection{Euler Equations}
We study one dimensional fluid flow without friction. We denote by $\rho$, $w$ and $p$ the density, velocity and pressure respectively. The equations of motion are
\begin{align}
  \label{eq:5}
  \partial_t\rho+\partial_x(\rho w)&=0,\\
  \label{eq:6}
  \partial_t w+w\partial_xw&=-\frac{1}{\rho}\partial_xp.
\end{align}
We assume $p=p(\rho)$ is a given smooth function which satisfies $\frac{dp}{d\rho}(\rho)>0$. We do not take into account entropy. Equations \eqref{eq:5}, \eqref{eq:6} follow from the conservation of mass and momentum once we assume that the quantities describing the fluid are continuously differentiable (see \cite{courant}).

\subsection{Riemann Invariants and Characteristic Equations}
The Riemann invariants are
\begin{align}
  \label{eq:7}
  \alpha\defeq\int^\rho\frac{\eta(\rho')}{\rho'}d\rho'+w,\qquad\beta\defeq\int^\rho\frac{\eta(\rho')}{\rho'}d\rho'-w,
\end{align}
where $\eta\defeq \sqrt{\frac{dp}{d\rho}}$ is the sound speed, see \cite{riemann} and \cite{Earnshaw1998}. As a consequence we have
\begin{align}
  \label{eq:8}
  w=\frac{\alpha-\beta}{2}.
\end{align}
Defining
\begin{alignat}{3}
  \label{eq:9}
  \cout&\defeq w+\eta,&\qquad\qquad\cin&\defeq w-\eta,\\
  \label{eq:10}
  L_{\textrm{out}}&\defeq\partial_t+\cout\partial_x,& L_{\textrm{in}}&\defeq\partial_t+\cin\partial_x,
\end{alignat}
we have
\begin{align}
  \label{eq:11}
  L_{\textrm{out}}\alpha=0=L_{\textrm{in}}\beta,
\end{align}
i.e.~the Riemann invariants $\alpha$, $\beta$ are invariant along the integral curves of $L_\textrm{out}$, $L_{\textrm{in}}$, respectively.

We have
\begin{align}
  \label{eq:12}
  \frac{\partial(\alpha,\beta)}{\partial(\rho,w)}&=
  \begin{pmatrix}
    \eta/\rho & 1\\\eta/\rho & -1
  \end{pmatrix}.\\
\intertext{Therefore,}
  \label{eq:13}
  \frac{\partial(\rho,w)}{\partial(\alpha,\beta)}&=
  \begin{pmatrix}
    \rho/2\eta & \rho/2\eta\\1/2 & -1/2
  \end{pmatrix}.
\end{align}

\subsection{Jump Conditions and the Determinism Condition}
If we assume that there exists a differentiable curve $t\mapsto(t,\xi(t))$ (a so called shock curve) across which the quantities describing the fluid suffer discontinuities but in the closure of both sides the differential equations are satisfied, the conservation of mass and momentum yield the following two conditions (jump conditions) on the discontinuities (see \cite{courant})
\begin{align}
  \label{eq:14}
  \jump{\rho}V&=\jump{\rho w},\\
    \label{eq:15}
  \jump{\rho w}V&=\jump{\rho w^2+p},
\end{align}
where we denote by $V$ the shock speed: $V=d\xi/dt$ and by $\jump{f}$ the difference of the function $f$ across $\xi$, i.e.
\begin{align}
  \label{eq:16}
  \jump{f}=f_+-f_-
\end{align}
where $f_+$ is the quantity evaluated behind the shock and $f_-$ is the quantity evaluated ahead of the shock\footnote{From an evolutionary point of view, looking at the time evolution of a portion of fluid, the state ahead corresponds to the part of the fluid flow line before the intersection with the shock and the state behind corresponds to the part of the fluid flow line after the intersection with the shock. Hence $\jump{f}$ corresponds to the jump in the quantity $f$ while crossing the shock curve.}:
\begin{figure}[h]
  \centering
  \begin{tikzpicture}[scale=1.2]

  \begin{scope}[yshift=7.87cm]
    \fill [fill=gray!20, domain=.5:1.9, variable=\x] (.5, -7.87) -- plot ({\x}, {.5*\x^2-8}) -- (1.9, -7.87) -- cycle;
    \fill [fill=gray!40, domain=.5:1.9, variable=\x] (.5, -6.2) -- plot ({\x}, {.5*\x^2-8}) -- (1.9, -6.2) -- cycle;
    \fill[fill=gray!20] (1.89,-7.87) rectangle (3.8,-6.2);
    \fill[fill=gray!40] (-1,-7.87) rectangle (.51,-6.2);
    \draw[thick,domain=.5:1.9,smooth,variable=\x] plot (\x,{.5*\x^2-8});
    \draw[dashed,thick](1.3,-7.155)--(1.2,-6.2);
    \draw[dashed,thick] (1.3,-7.155)--(1.2,-7.88);
  \end{scope}
  \node[anchor=west] at (1.75,1.5) {$\mathcal{S}$};
  \node[anchor=west] at (1.8,.5) {state ahead};
  \node[anchor=west] at (-1,1.4) {state behind};
  \begin{scope}[xshift=-1cm]
    \draw[-latex] (-0.2,0) -- (5.3,0) node[anchor=west] {$x$};
    \draw[-latex] (0,-0.2) -- (0,2.1) node[anchor=south] {$t$};
  \end{scope}
  \begin{scope}[xshift=3cm,yshift=2cm]
  \draw[dashed](2,-1.5)--(3,-1.5)node[anchor=west]{Fluid flow line};
  \draw[thick](2,-1)--(3,-1)node[anchor=west]{Shock curve $\mathcal{S}$};
  \end{scope}
\end{tikzpicture}

  \caption{The state ahead and behind of the shock.}
\end{figure}
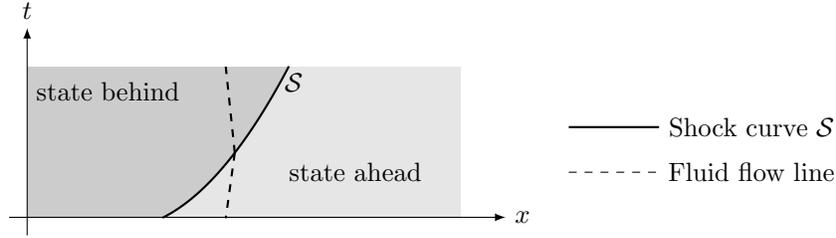

The determinism condition states that the speed of the shock is supersonic relative to the state ahead of the shock and subsonic relative to the state behind the shock. For a more general description of the determinism condition see the epilogue of \cite{christodoulouformation} or section 2.3 of \cite{ChristodoulouLisibach}.

\section{Setting the Scene}
\subsection{The States Ahead of the Emerged Shocks\label{state_ahead}}
We consider two sets of data for $\rho$, $w$ given at $t=0$ for $x\geq 0$ and $x\leq 0$. Each of these data sets possesses a future development, i.e.~there exist functions
\begin{align}
  \label{eq:17}
  \is{\rho}^\ast(t,x),\qquad\is{w}^\ast(t,x),\qquad i=1,2,
\end{align}
which solve the equations of motion and coincide with the respective data set for $t=0$. Here we use the notation that everything which is related to the left moving shock is denoted by a stacked index 1 and analogously for quantities related to the right moving shock we use the stacked index $2$. These developments of initial data are bounded in the past by $t=0$ and in the future by $t=\is{T}$, $i=1,2$. Furthermore, the development of the data given for $x\geq 0$ is bounded to the left by a right moving characteristic originating at the interaction point $(t,x)=(0,0)$ and vice versa for the development of the data given for $x\leq 0$. We are going to denote these characteristics by $\is{\mathcal{B}}$, $i=1,2$. We assume that the two given data sets correspond to a shock interaction point in the following way.

We assume that at the interaction point, the jump conditions \eqref{eq:14}, \eqref{eq:15}, applied to the jumps across both shocks (a set of four equations), with the quantities ahead of the shocks given by
\begin{align}
  \label{eq:18}
  \is{\rho}_-=\is{\rho}^\ast(0,0),\qquad \is{w}_-=\is{w}^\ast(0,0)\qquad i=1,2
\end{align}
and the conditions behind the shocks
\begin{align}
  \label{eq:19}
\ls{\rho}_+=\rs{\rho}_+,\qquad   \ls{w}_+=\rs{w}_+,
\end{align}
possesses the solution
\begin{align}
  \label{eq:20}
  \is{\rho}_+\defeq \rho_0,\qquad \is{w}_+\defeq w_0,\qquad \is{V}\defeq \is{V}_0,\qquad i=1,2,
\end{align}
such that the determinism conditions are satisfied for both shocks. By an appropriate change of reference frame we have $w_0=0$. This change of reference frame does not affect the determinism conditions across the shocks. The condition for the left moving shock is then
  \begin{align}
  \label{eq:21}
  -\eta_0<\ls{V}_0<(\ls{\cin}_0^\ast)_0,
  \end{align}
  where
  \begin{align}
    \label{eq:22}
    \eta_0=\eta(\rho_0),\qquad (\ls{\cin}_0^\ast)_0=\ls{w}^\ast(0,0)-\eta(\ls{\rho}^\ast(0,0)),
  \end{align}
  i.e.~$\eta_0$ is the sound speed in the state behind the shocks and $(\ls{\cin}_0^\ast)_0$ is the characteristic speed of the left moving characteristic in the state ahead of the left moving shock originating at the interaction point. The condition for the right moving shock is
  \begin{align}
    \label{eq:23}
    (\rs{\cout}_0^\ast)_0<\rs{V}_0<\eta_0,
  \end{align}
  where
\begin{align}
  \label{eq:24}
  (\rs{\cout}_0^\ast)_0=\rs{w}^\ast(0,0)+\eta(\rs{\rho}^\ast(0,0))
\end{align}
is the characteristic speed of the right moving characteristic in the state ahead of the right moving shock originating at the interaction point. In addition we assume
\begin{align}
  \label{eq:25}
  \ls{V}_0<0<\rs{V}_0.
\end{align}
The above described developments of the two data sets together with the solution of the jump conditions at the interaction point, satisfying the determinism conditions \eqref{eq:21}, \eqref{eq:23} and \eqref{eq:25}, set up the shock interaction problem.
\begin{figure}[h]
  \centering
  \begin{tikzpicture}[scale=1.3]
  \fill [fill=gray!40, domain=0:.5, variable=\x] (0,0) -- plot (-\x,{2*\x+2*\x^2}) -- ++(-4,0)-- (-4.5,0) -- cycle;
  \fill [fill=gray!40, domain=0:1, variable=\x] (0,0) -- plot (\x,{1*\x+1*\x^2}) -- ++(3.5,0)-- (4.5,0) -- cycle;

  \draw[thick,domain=0:.5,smooth,variable=\x] plot (-\x,{2*\x+2*\x^2})node[anchor=south]{$\overset{\scriptscriptstyle{1}}{\mathcal{B}}$};
  \draw[thick,domain=0:1,smooth,variable=\x] plot (\x,{1*\x+1*\x^2})node[anchor=south]{$\overset{\scriptscriptstyle{2}}{\mathcal{B}}$};
  
  \draw[-latex] (-4.5,0) -- (5,0) node[anchor=west] {$x$};
  \draw[-latex] (0,-.2) -- (0,2.5) node[anchor=east] {$t$};

  \draw[latex-](.8,1.3)--(1.05,1.3)node[anchor=west] {right moving charactertistic};
  \draw[latex-](-.5,1.3)--(-1.1,1.3)node[anchor=east] {left moving charactertistic};
  \node[anchor=north] at (3.2,.6) {future development};
  \node[anchor=north] at (-3.2,.6) {future development};
  \draw[latex-](3.5,0)--(3.5,-.5)node[anchor=north] {data for $x\geq 0$};
  \draw[latex-](-3.5,0)--(-3.5,-.5)node[anchor=north] {data for $x\leq 0$};
  \node[anchor=north] at (0,-.5){shock tangents};
  \draw[-latex](.5,-.5)--(.5,.2);
  \draw[-latex](-.5,-.5)--(-.5,.4);
  \draw[dashed,thick](0,0)--(-.8,.8);
  \draw[dashed,thick](0,0)--(1.4,.8);
  \fill (0,0) circle[radius=1.2pt];
\end{tikzpicture}

  \caption{Future developments of the two initial data sets.}
  \label{future_development}
\end{figure}
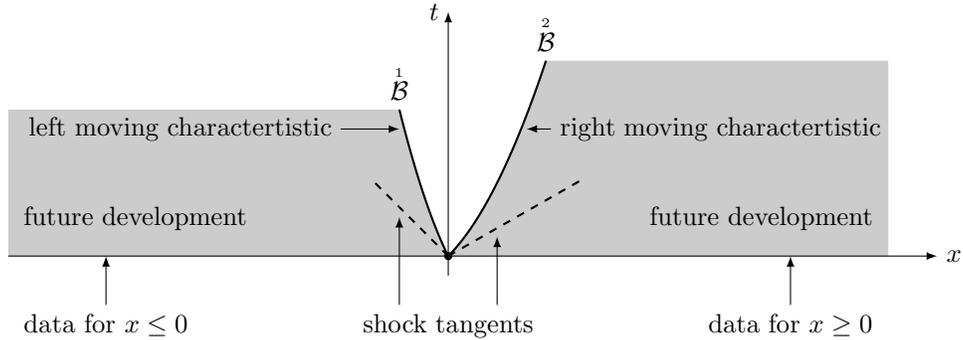

\subsection{The Shock Interaction Problem}
The shock interaction problem is the following:\\
Find two world lines $\is{\mathcal{S}}$, ($i=1,2$), lying inside the respective future developments, originating at the interaction point $(t,x)=(0,0)$, together with a solution of the equations of motion in a domain in spacetime bounded by the $\is{\mathcal{S}}$, such that across the $\is{\mathcal{S}}$ the new solution displays jumps relative to the solution in the future developments, jumps which satisfy the jump conditions. The domain to the left of $\ls{\mathcal{S}}$, where the solution in the future development of the data for $x\leq 0$ holds, is called the state ahead of $\ls{\mathcal{S}}$, the domain to the right of $\rs{\mathcal{S}}$, where the solution in the future development of the data for $x\geq 0$ holds, is called the state ahead of $\rs{\mathcal{S}}$ and the domain between the $\is{\mathcal{S}}$, where the new solution holds is called the state behind of $\is{\mathcal{S}}$. $\is{\mathcal{S}}$ are to be supersonic relative to the states ahead and subsonic relative to the state behind. The requirements in the last sentence are the determinism conditions across the shocks. We are going to bound the state behind also by a right moving characteristic. See figure \ref{interaction_problem}.
\begin{figure}[h!]
  \centering
  \begin{tikzpicture}[scale=1.3]

    \fill [fill=gray!60, domain=0:.4, variable=\x] (0,0) -- plot (-1.6*\x,{2*\x+2*\x^2}) --(0,1.56218) -- cycle;
  \fill [fill=gray!60, domain=0:1.2, variable=\x] (0,0) -- plot (1.3*\x,{1*\x+1*\x^2}) --(0,1.56218) -- cycle;

  \fill [fill=gray!40, domain=0:.4, variable=\x] (0,0) -- plot (-1.6*\x,{2*\x+2*\x^2}) -- ++(-3.36,0)-- (-4,0) -- cycle;
  \fill [fill=gray!40, domain=0:1.2, variable=\x] (0,0) -- plot (1.3*\x,{1*\x+1*\x^2}) -- ++(2.44,0)-- (4,0) -- cycle;

  \draw[dashed,domain=0:.45,smooth,variable=\x] plot (-1*\x,{2*\x+2*\x^2})node[anchor=south]{$\overset{\scriptscriptstyle{1}}{\mathcal{B}}$};
  \draw[dashed,domain=0:1.2,smooth,variable=\x] plot (1*\x,{1*\x+1*\x^2})node[anchor=south]{$\overset{\scriptscriptstyle{2}}{\mathcal{B}}$};

  \draw[thick,domain=0:.4,smooth,variable=\x] plot (-1.6*\x,{2*\x+2*\x^2})node[anchor=south]{$\overset{\scriptscriptstyle{1}}{\mathcal{S}}$};
  \draw[thick,domain=0:1.2,smooth,variable=\x] plot (1.3*\x,{1*\x+1*\x^2})node[anchor=south]{$\overset{\scriptscriptstyle{2}}{\mathcal{S}}$};


 \draw[dashed,thick](0,0)--(-.8,.8);
 \draw[dashed,thick](0,0)--(1.4,.8);
  \node[anchor=north] at (0,-.5){shock tangents};
  \draw[-latex](.5,-.5)--(.5,.2);
  \draw[-latex](-.5,-.5)--(-.5,.4);

  \node[anchor=north] at (2.8,.8) {state ahead of $\overset{\scriptscriptstyle{2}}{\mathcal{S}}$};
  \node[anchor=north] at (-2.8,.8) {state ahead of $\overset{\scriptscriptstyle{1}}{\mathcal{S}}$};
  \draw[latex-](3,0)--(3,-.5)node[anchor=north] {data for $x\geq 0$};
  \draw[latex-](-3,0)--(-3,-.5)node[anchor=north] {data for $x\leq 0$};

  \draw[-latex] (-4,0) -- (4.5,0) node[anchor=west] {$x$};
  \draw[-latex] (0,-.2) -- (0,3.3) node[anchor=east] {$t$};

  \fill (0,0) circle[radius=1.2pt];
\end{tikzpicture}

  \caption{Shock interaction Problem. The state behind the emerged shocks in dark shade.}
  \label{interaction_problem}
\end{figure}
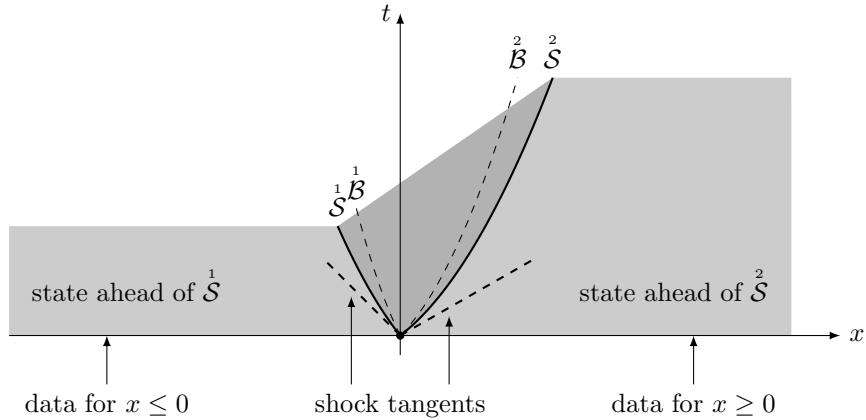

\section{Characteristic Coordinates}
\subsection{Choice of Coordinates}
Introducing coordinates $u$, $v$\label{u-v-coordinates}, such that $u$ is constant along integral curves of $L_{\textrm{out}}$ and $v$ is constant along integral curves of $L_{\textrm{in}}$ (see \eqref{eq:10}), \eqref{eq:11} becomes
\begin{align}
  \label{eq:26}
  \frac{\partial\alpha}{\partial v}=0=\frac{\partial\beta}{\partial u}.
\end{align}
In $u$-$v$-coordinates, $t$ and $x$ satisfy the characteristic equations
\begin{align}
  \label{eq:27}
  \frac{\partial x}{\partial v}=\cout\frac{\partial t}{\partial v},\qquad\frac{\partial x}{\partial u}=\cin\frac{\partial t}{\partial u}.
\end{align}
We choose the $u$-$v$-coordinates such that the following conditions hold:
\begin{enumerate}
\item The origin $(u,v)=(0,0)$ corresponds to the interaction point $(t,x)=(0,0)$.
\item The left moving shock $\ls{\mathcal{B}}$ corresponds to $u=v$.
\item The right moving shock $\rs{\mathcal{B}}$ corresponds to $u=av$, for a constant $a$ with $0<a<1$ (see \eqref{eq:36}, \eqref{eq:43} below).
\item We have
  \begin{align}
    \label{eq:28}
    \pp{x}{v}(0,0)=1.
  \end{align}
\end{enumerate}

This choice of coordinates is justified as follows: The condition $u=v$ on the left moving shock $\ls{\mathcal{B}}$ can be imposed once a $v=\textrm{const.}$~curve intersects $\ls{\mathcal{B}}$ exactly once, which is the case because $\ls{\mathcal{B}}$ is a subsonic curve. Thus $u$ is determined once $v$ is determined. The right moving shock $\rs{\mathcal{B}}$ being subsonic relative to the state behind, it is given in $u$-$v$-coordinates by an equation of the form $u=f(v)$, where $f$ is an increasing function. We must have $f(0)=0$ since the interaction point, which is the origin, is on $\ls{\mathcal{B}}$ as well as on $\rs{\mathcal{B}}$. Moreover, $f(v)<v$ for $v>0$ because $\rs{\mathcal{B}}$ is to the right of $\ls{\mathcal{B}}$. Now, we can set $u=\phi(\tilde{u})$, $v=\phi(\tilde{v})$, where $\phi$ is any increasing function such that $\phi(0)=0$, without changing the equation of $\ls{\mathcal{B}}$: $\tilde{u}=\tilde{v}$ or the fact that the interaction point corresponds to the origin. The equation of $\rs{\mathcal{B}}$ is then transformed to $\tilde{u}=\tilde{f}(\tilde{v})$, where
\begin{align}
  \label{eq:29}
  \tilde{f}=\phi^{-1}\circ f\circ \phi.
\end{align}
It follows that $\tilde{f}'(0)=f'(0)$. Let $a=f'(0)$. We have $0<a<1$. The problem is then to choose an appropriate $\phi$ such that $\tilde{f}(\tilde{v})=a\tilde{v}$. It can easily be shown by an iteration method starting with the $0$'th iterate $\phi_0$ being the identity map $\phi_0(x)=x$ that the equation $\phi\circ \tilde{f}=f\circ \phi$ with $\tilde{f}(x)=ax$ has a solution $\phi$ defined on $[0,\varepsilon^\ast]$ for suitably small $\varepsilon^\ast>0$. We can then extend this local solution to a global one using a continuity argument.

Let $\varepsilon>0$. In the following we consider the domain which is bounded by $\ls{\mathcal{B}}$, $\rs{\mathcal{B}}$ and the right moving characteristic $u=a\varepsilon$:
\begin{align}
  \label{eq:30}
  T_\varepsilon=\Big\{(u,v)\in\mathbb{R}^2:0\leq u\leq v\leq \frac{u}{a}\leq\varepsilon\Big\}.
\end{align}
See figure \ref{domain}.
\begin{figure}[h]
  \centering
  \begin{tikzpicture}[scale=1.8]

  \begin{scope}[rotate=45]
    \draw[-latex](0,0)--(2,0)node[anchor=south west]{$v$};
    \draw[-latex](0,0)--(0,2)node[anchor=south east]{$u$};
    \draw(0,0)--(1.5,1.5)node[anchor=south]{$u=v$};
    \draw(0,0)--(1.8,.9)node[anchor=south]{$u=av$};
    \draw[fill=gray!40](0,0)--(1.4,.7)--(0.7,0.7)--cycle;
    \node at(0,.7)[anchor=north east]{$a\varepsilon$};
    \draw[dashed](0,.7)--(.7,.7);
    \draw (-.05,.7)--(.05,.7);
    \draw[dashed](1.4,.7)--(1.4,0);
    \draw (1.4,.05)--(1.4,-.05);
    \node at(1.4,0)[anchor=north west]{$\varepsilon$};
    \node at(.71,.5){$T_\varepsilon$};
  \end{scope}

\end{tikzpicture}

  \caption{The domain $T_\varepsilon$. The left moving shock $\protect\ls{\mathcal{B}}$ corresponds to $\{u=v\}$, the right moving shock $\protect\rs{\mathcal{B}}$ corresponds to $\{u=av\}$.}
  \label{domain}
\end{figure}
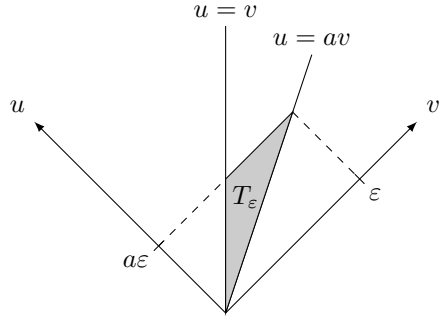

\subsection{Boundary Conditions}
We look at the boundary conditions
\begin{align}
  \label{eq:31}
  \ls{V}d\ls{t}_+=d\ls{x}_+,\qquad \rs{V}d\rs{t}_+=d\rs{x}_+
\end{align}
along $\ls{S}$, $\rs{S}$ respectively. Here $\is{V}$, $i=1,2$ are the shock speeds and we use the notation that for any function $f(u,v)$:
\begin{align}
  \label{eq:32}
  \ls{f}_+(u)=f(u,u),\qquad \rs{f}_+(v)=f(av,v).
\end{align}
I.e.~the index $+$ denotes evaluation along the shock and the stacked index indicates which shock the quantity is related to. We use the two different arguments $u$, $v$ in order to indicate that for a function $\ls{f}_+(u)$ it holds that $u\in[0,a\varepsilon]$, while for a function $\rs{f}_+(v)$ it holds that $v\in[0,\varepsilon]$ (see figure \ref{domain}). We define
\begin{align}
  \label{eq:33}
  \ls{\Gamma}\defeq\frac{\ls{\cout}_+}{\ls{\cin}_+}\,\frac{\ls{V}-\ls{\cin}_+}{\ls{\cout}_+-\ls{V}},\qquad\rs{\Gamma}\defeq a\frac{\rs{\cout}_+}{\rs{\cin}_+}\,\frac{\rs{V}-\rs{\cin}_+}{\rs{\cout}_+-\rs{V}}.
\end{align}
Then, using the characteristic system \eqref{eq:27}, the boundary conditions \eqref{eq:31} become
\begin{align}
  \label{eq:34}
  \pp{x}{v}(u,u)=\pp{x}{u}(u,u)\ls{\Gamma}(u),\qquad \pp{x}{v}(av,v)=\pp{x}{u}(av,v)\rs{\Gamma}(v),
\end{align}
respectively and it holds that (by our choice $\pp{x}{v}(0,0)=1$)
\begin{align}
  \label{eq:35}
  \Gamma_0\defeq\ls{\Gamma}_0=\rs{\Gamma}_0=\frac{1}{\pp{x}{u}(0,0)},
\end{align}
where here and in the following, the index $0$ denotes evaluation at the point of intersection. From \eqref{eq:33}, \eqref{eq:35} we obtain
\begin{align}
  \label{eq:36}
  a&=\frac{\ls{V}_0-\cin_0}{\cout_0-\ls{V}_0}\,\,\frac{\rs{V}_0-\cout_0}{\cin_0-\rs{V}_0}\nonumber\\
   &=\frac{\eta_0+\ls{V}_0}{\eta_0-\ls{V}_0}\,\,\frac{\eta_0-\rs{V}_0}{\eta_0+\rs{V}_0},
\end{align}
where we used $\cin_0=-\eta_0$, $\cout_0=\eta_0$ (recall that $w_0=0$). $\ls{S}$ being subsonic relative to the state behind (see \eqref{eq:21}), we have
\begin{align}
  \label{eq:37}
  -\eta_0<\ls{V}_0,  
\end{align}
which implies
\begin{align}
  \label{eq:38}
  0<\eta_0+\ls{V}_0.    
\end{align}
From (see \eqref{eq:25})
\begin{align}
  \label{eq:39}
  \ls{V}_0<w_0=0      
\end{align}
we have
\begin{align}
  \label{eq:40}
  0<-\ls{V}_0<\eta_0-\ls{V}_0.      
\end{align}
From \eqref{eq:38}, \eqref{eq:39}, \eqref{eq:40} we deduce
\begin{align}
  \label{eq:41}
  0<\frac{\eta_0+\ls{V}_0}{\eta_0-\ls{V}_0}<1.
\end{align}
The analogous argument with our assumption of $\rs{S}$ being subsonic relative to the state behind yields
\begin{align}
  \label{eq:42}
    0<\frac{\eta_0-\rs{V}_0}{\eta_0+\rs{V}_0}<1.
\end{align}
From \eqref{eq:41}, \eqref{eq:42}, in view of \eqref{eq:36} we obtain
\begin{align}
  \label{eq:43}
  0<a<1,
\end{align}
as imposed in the set up of the characteristic coordinate system. We note that
\begin{align}
  \label{eq:44}
  \Gamma_0=\ls{\Gamma}_0=-\frac{\eta_0+\ls{V}_0}{\eta_0-\ls{V}_0}<0,
\end{align}
i.e.
\begin{align}
  \label{eq:45}
  \pp{x}{u}(0,0)=\frac{1}{\Gamma_0}<0.
\end{align}

The functions $\alpha(u,v)$, $\beta(u,v)$ are given along the left moving shock by
\begin{alignat}{3}
  \label{eq:46}
  \ls{\alpha}_+(u)&=\alpha(u,u),\qquad& \ls{\beta}_+(u)&=\beta(u,u),\\
\intertext{and along the right moving shock by}
  \label{eq:47}
  \rs{\alpha}_+(v)&=\alpha(av,v),\qquad&\rs{\beta}_+(v)&=\beta(av,v).
\end{alignat}
From \eqref{eq:26} we then obtain\footnote{We note that equations of the type \eqref{eq:48}, \eqref{eq:49} naturally break with our convention of using the argument $u$ along $\ls{\mathcal{S}}$ and the argument $v$ along $\rs{\mathcal{S}}$. However, the factor $a$ in $\ls{\alpha}_+(av)$ takes care of the argument of the function $\ls{\alpha}$ being restricted to the interval $[0,a\varepsilon]$ as it should be.}
\begin{align}
  \label{eq:48}
  \ls{\beta}_+(u)&=\beta(u,u)=\beta(au,u)=\rs{\beta}_+(u),\\
  \rs{\alpha}_+(v)&=\alpha(av,v)=\alpha(av,av)=\ls{\alpha}_+(av),\label{eq:49}
\end{align}
which implies\footnote{The way we write the derivatives as in \eqref{eq:50}, \eqref{eq:51} (we avoid the prime notation) points out to which shock the quantity is related.}
\begin{align}
  \label{eq:50}
  \frac{d\ls{\beta}_+}{du}(u)&=\frac{d\rs{\beta}_+}{dv}(u),\\
  \frac{d\rs{\alpha}_+}{dv}(v)&=a\frac{d\ls{\alpha}_+}{du}(av).\label{eq:51}
\end{align}

\subsection{Jump Conditions}
The jump conditions \eqref{eq:14}, \eqref{eq:15} are equivalent to
\begin{align}
  \label{eq:52}
  V&=\frac{\jump{\rho w}}{\jump{\rho}},\\
    \label{eq:53}
  0&=\jump{\rho w}^2-\jump{\rho w^2+p}\jump{\rho}\defeq I(\rho_+,\rho_-,w_+,w_-).
\end{align}
$\rho$ and $w$ are given smooth functions of $\alpha$, $\beta$. Defining
\begin{align}
  \label{eq:54}
  J(\alpha_+,\beta_+,\alpha_-,\beta_-)\defeq I\Big(\rho(\alpha_+,\beta_+),\rho(\alpha_-,\beta_-),w(\alpha_+,\beta_+),w(\alpha_-,\beta_-)\Big),
\end{align}
the above jump condition \eqref{eq:53} in terms of the Riemann invariants is
\begin{align}
  \label{eq:55}
  J(\alpha_+,\beta_+,\alpha_-,\beta_-)=0.
\end{align}
We recall that the solution of the system of jump conditions \eqref{eq:52}, \eqref{eq:53} across both shocks at the interaction point is $\rho_0$, $w_0$, $\is{V}_0$, $i=1,2$ (see \eqref{eq:20}). The value of $\rho_0$ together with the condition $w_0=(\alpha_0-\beta_0)/2=0$ (see \eqref{eq:8}) determines the value $\alpha_0=\beta_0$. This value substituted for $\alpha_+$, $\beta_+$ together with the values of $\alpha$, $\beta$ at the interaction point in the states ahead (hence given by the data) satisfy the jump condition \eqref{eq:55} across both shocks, i.e.
\begin{align}
  \label{eq:56}
  J(\beta_0,\beta_0,\is{\alpha}_{-0},\is{\beta}_{-0})=0,\qquad i=1,2.
\end{align}
Here
\begin{align}
  \label{eq:57}
  \is{\alpha}_{-0}=\is{\alpha}^\ast(0,0),\qquad\is{\beta}_{-0}=\is{\beta}^\ast(0,0),\qquad i=1,2
\end{align}
are given by the solutions in the states ahead at the interaction point. We have (see the derivation in \cite{lisibach2021shock})
\begin{align}
  \label{eq:58}
  \pp{J}{\alpha_+}(\alpha_+,\beta_+,\alpha_-,\beta_-)&=-\frac{\jump{\rho}\rho_+}{2\eta_+}(\cout_+-V)^2,\\
  \pp{J}{\beta_+}(\alpha_+,\beta_+,\alpha_-,\beta_-)&=-\frac{\jump{\rho}\rho_+}{2\eta_+}(V-\cin_+)^2.  \label{eq:59}
\end{align}
In these equations all the quantities on the right hand side are functions of $\alpha_+$, $\beta_+$, $\alpha_-$, $\beta_-$, either through given functions of $\rho$ and $w$ or, as in the case for $V$, through the jump condition \eqref{eq:52}. We have
\begin{align}
  \label{eq:60}
  \ls{V}_0-\cout_0&=\ls{V}_0-\eta_0<0,\\
  \rs{V}_0-\cin_0&=\rs{V}_0+\eta_0>0.  \label{eq:61}
\end{align}
(see \eqref{eq:40} and the analogous statement for the right moving shock). Hence
\begin{align}
  \label{eq:62}
  \pp{J}{\alpha_+}(\beta_0,\beta_0,\ls{\alpha}_{-0},\ls{\beta}_{-0})&\neq 0,\\
  \pp{J}{\beta_+}(\beta_0,\beta_0,\rs{\alpha}_{-0},\rs{\beta}_{-0})&\neq 0.  \label{eq:63}
\end{align}
Using the implicit function theorem, we conclude from \eqref{eq:56}, \eqref{eq:62}, \eqref{eq:63} that there exist smooth functions $\ls{H}(\beta_+,\alpha_-,\beta_-)$, $\rs{H}(\alpha_+,\alpha_-,\beta_-)$ such that
\begin{align}
  \label{eq:64}
  \beta_0=\ls{H}(\beta_0,\ls{\alpha}_{-0},\ls{\beta}_{-0})=\rs{H}(\beta_0,\rs{\alpha}_{-0},\rs{\beta}_{-0})
\end{align}
and
\begin{align}
  \label{eq:65}
  J\Big(\ls{H}(\beta_+,\ls{\alpha}_-,\ls{\beta}_-),\beta_+,\ls{\alpha}_-,\ls{\beta}_-\Big)&=0,\\
    J\Big(\alpha_+,\rs{H}(\alpha_+,\rs{\alpha}_-,\rs{\beta}_-),\rs{\alpha}_-,\rs{\beta}_-\Big)&=0,  \label{eq:66}
\end{align}
for $(\beta_+,\ls{\alpha}_-,\ls{\beta}_-)$ sufficiently close to $(\beta_0,\ls{\alpha}_{-0},\ls{\beta}_{-0})$ and $(\alpha_+,\rs{\alpha}_-,\rs{\beta}_-)$ sufficiently close to $(\beta_0,\rs{\alpha}_{-0},\rs{\beta}_{-0})$.

Along the left and right moving shocks we have
\begin{align}
  \label{eq:67}
  \ls{\alpha}_+(u)&=\ls{H}(\ls{\beta}_+(u),\ls{\alpha}_-(u),\ls{\beta}_-(u)),\\
  \rs{\beta}_+(v)&=\rs{H}(\rs{\alpha}_+(v),\rs{\alpha}_-(v),\rs{\beta}_-(v)),\label{eq:68}               
\end{align}
respectively. Taking the derivative of the first one of these we obtain
\begin{align}
  \label{eq:69}
  \frac{d\ls{\alpha}_+}{du}(u)&=\ls{F}(\ls{\beta}_+(u),\ls{\alpha}_-(u),\ls{\beta}_-(u))\frac{d\ls{\beta}_+}{du}(u)\nonumber\\
  &\quad+\ls{M}_1(\ls{\beta}_+(u),\ls{\alpha}_-(u),\ls{\beta}_-(u))\frac{d\ls{\alpha}_-}{du}(u)\nonumber\\
  &\quad+\ls{M}_2(\ls{\beta}_+(u),\ls{\alpha}_-(u),\ls{\beta}_-(u))\frac{d\ls{\beta}_-}{du}(u),
\end{align}
where
\begin{align}
  \label{eq:70}
  \ls{F}(\beta_+,\alpha_-,\beta_-)&\defeq\pp{\ls{H}}{\beta_+}(\beta_+,\alpha_-,\beta_-)\nonumber\\
                                                     &=-\frac{\pp{J}{\beta_+}\Big(\ls{H}(\beta_+,\alpha_-,\beta_-),\beta_+,\alpha_-,\beta_-\Big)}{\pp{J}{\alpha_+}\Big(\ls{H}(\beta_+,\alpha_-,\beta_-),\beta_+,\alpha_-,\beta_-\Big)}
\end{align}
and similar expressions hold for $\ls{M}_1$, $\ls{M}_2$. Using \eqref{eq:58}, \eqref{eq:59} we obtain
\begin{align}
  \label{eq:71}
  \ls{F}(\ls{\beta}_+,\ls{\alpha}_-,\ls{\beta}_-)=-\Bigg(\frac{\ls{V}-\ls{\cin}_+}{\ls{\cout}_+-\ls{V}}\Bigg)^2.
\end{align}
Taking the derivative of \eqref{eq:68} we obtain
\begin{align}
  \label{eq:72}
  \frac{d\rs{\beta}_+}{dv}(v)=\rs{F}(\rs{\alpha}_+(v),\rs{\alpha}_-(v),\rs{\beta}_-(v))\frac{d\rs{\alpha}_+}{dv}(v)+\ldots
\end{align}
where we denote by $\ldots$ terms analogous to the ones showing up in the second and third lines of \eqref{eq:69}. Through an analogous definition as in \eqref{eq:70}, we have
\begin{align}
  \label{eq:73}
  \rs{F}(\rs{\alpha}_+,\rs{\alpha}_-,\rs{\beta}_-)=-\Bigg(\frac{\rs{\cout}_+-\rs{V}}{\rs{V}-\rs{\cin}_+}\Bigg)^2.
\end{align}
We note that by \eqref{eq:36} (and also recalling that $\cin_0=-\eta_0$, $\cout_0=\eta_0$) we have
\begin{align}
  \label{eq:74}
  \ls{F}_0\rs{F}_0=a^2,
\end{align}
where
\begin{align}
  \label{eq:75}
  \is{F}_0=\is{F}(\beta_0,\is{\alpha}_{-0},\is{\beta}_{-0}),\qquad i=1,2.
\end{align}

Using the definitions
\begin{align}
  \label{eq:76}
  \alpha_0'\defeq\frac{d\ls{\alpha}_+}{du}(0),\qquad\beta_0'\defeq\frac{d\rs{\beta}_+}{dv}(0),
\end{align}
from \eqref{eq:48}, \eqref{eq:49}, \eqref{eq:50}, \eqref{eq:51}, \eqref{eq:69}, \eqref{eq:72} in the case $u=v=0$ we obtain
\begin{align}
  \label{eq:77}
  \alpha_0'&=\ls{F}_0\beta_0'+\ls{M}_{10}\frac{d\ls{\alpha}_-}{dv}(0)+\ls{M}_{20}\frac{d\ls{\beta}_-}{dv}(0),\\
    \beta_0'&=a\rs{F}_0\alpha_0'+\rs{M}_{10}\frac{d\rs{\alpha}_-}{dv}(0)+\rs{M}_{20}\frac{d\rs{\beta}_-}{dv}(0).\label{eq:78}
\end{align}
We have
\begin{align}
  \label{eq:79}
  \ls{\alpha}_-(u)=\ls{\alpha}^\ast(\ls{t}_+(u),\ls{x}_+(u)),\qquad\ls{\beta}_-(u)=\ls{\beta}^\ast(\ls{t}_+(u),\ls{x}_+(u)).
\end{align}
Taking the derivative yields
\begin{align}
  \label{eq:80}
  \frac{d\ls{\alpha}_-}{du}(0)&=\Bigg(\pp{\ls{\alpha}^\ast}{t}\Bigg)_0\frac{d\ls{t}_+}{du}(0)+\Bigg(\pp{\ls{\alpha}^\ast}{x}\Bigg)_0\frac{d\ls{x}_+}{du}(0),\\
   \frac{d\ls{\beta}_-}{du}(0)&=\Bigg(\pp{\ls{\beta}^\ast}{t}\Bigg)_0\frac{d\ls{t}_+}{du}(0)+\Bigg(\pp{\ls{\beta}^\ast}{x}\Bigg)_0\frac{d\ls{x}_+}{du}(0).\label{eq:81}
\end{align}
The partial derivatives of $\ls{\alpha}^\ast(t,x)$, $\ls{\beta}^\ast(t,x)$ are given by the solution in the state ahead of the left moving shock. Recalling $\ls{t}_+(u)=t(u,u)$, $\ls{x}_+(u)=x(u,u)$ we have
\begin{align}
  \label{eq:82}
    \frac{d\ls{t}_+}{du}(u)=\pp{t}{u}(u,u)+\pp{t}{v}(u,u),\qquad\frac{d\ls{x}_+}{du}(u)=\pp{x}{u}(u,u)+\pp{x}{v}(u,u)
\end{align}
and using the characteristic equations \eqref{eq:27} together with $\cout_0=\eta_0=-\cin_0$, $\pp{x}{v}(0,0)=1$ and \eqref{eq:45} we obtain
\begin{align}
  \label{eq:83}
\frac{d\ls{t}_+}{du}(0)=\frac{1}{\eta_0}\left(-\frac{1}{\Gamma_0}+1\right),\qquad \frac{d\ls{x}_+}{du}(0)=\frac{1}{\Gamma_0}+1.
\end{align}
Using these in \eqref{eq:80}, \eqref{eq:81} we obtain
\begin{align}
  \label{eq:84}
  \frac{d\ls{\alpha}_-}{du}(0)&=\Bigg(\pp{\ls{\alpha}^\ast}{t}\Bigg)_0\frac{1}{\eta_0}\left(-\frac{1}{\Gamma_0}+1\right)+\Bigg(\pp{\ls{\alpha}^\ast}{x}\Bigg)_0\left(\frac{1}{\Gamma_0}+1\right),\\
  \frac{d\ls{\beta}_-}{du}(0)&=\Bigg(\pp{\ls{\beta}^\ast}{t}\Bigg)_0\frac{1}{\eta_0}\left(-\frac{1}{\Gamma_0}+1\right)+\Bigg(\pp{\ls{\beta}^\ast}{x}\Bigg)_0\left(\frac{1}{\Gamma_0}+1\right).\label{eq:85}
\end{align}
Recalling $\rs{x}_+(v)=x(av,v)$, $\rs{t}_+(v)=t(av,v)$ we obtain, analogously,
\begin{align}
  \label{eq:86}
  \frac{d\rs{\alpha}_-}{dv}(0)&=\Bigg(\pp{\rs{\alpha}^\ast}{t}\Bigg)_0\frac{1}{\eta_0}\left(-\frac{a}{\Gamma_0}+1\right)+\Bigg(\pp{\rs{\alpha}^\ast}{x}\Bigg)_0\left(\frac{a}{\Gamma_0}+1\right),\\
  \frac{d\rs{\beta}_-}{dv}(0)&=\Bigg(\pp{\rs{\beta}^\ast}{t}\Bigg)_0\frac{1}{\eta_0}\left(-\frac{a}{\Gamma_0}+1\right)+\Bigg(\pp{\rs{\beta}^\ast}{x}\Bigg)_0\left(\frac{a}{\Gamma_0}+1\right).\label{eq:87}
\end{align}

We rewrite \eqref{eq:77}, \eqref{eq:78} in the form
\begin{align}
  \label{eq:88}
  M
  \begin{pmatrix}
    \alpha_0'\\\beta_0'
  \end{pmatrix}=
  \begin{pmatrix}
    a_0\\b_0
  \end{pmatrix},
\end{align}
where
\begin{align}
  \label{eq:89}
  M&=
  \begin{pmatrix}
    1&-\ls{F}_0\\-a\rs{F}_0 & 1
  \end{pmatrix}\\
  a_0&=\ls{M}_{10}\frac{d\ls{\alpha}_-}{du}(0)+\ls{M}_{20}\frac{d\ls{\beta}_-}{du}(0),\label{eq:90}\\
  b_0&=\rs{M}_{10}\frac{d\rs{\alpha}_-}{dv}(0)+\rs{M}_{20}\frac{d\rs{\beta}_-}{dv}(0).\label{eq:91}
\end{align}
By \eqref{eq:74} we have
\begin{align}
  \label{eq:92}
  \det(M)=1-a\ls{F}_0\rs{F}_0=1-a^3>0,
\end{align}
and we have
\begin{align}
  \label{eq:93}
  \begin{pmatrix}
    \alpha_0' \\ \beta_0'
  \end{pmatrix}=M^{-1}
                \begin{pmatrix}
                  a_0 \\ b_0
                \end{pmatrix}.
\end{align}
I.e.~the constants $\alpha_0'$, $\beta_0'$ are given in terms of the solution in the states ahead at the interaction point and the solution of the jump conditions at the interaction point together with the condition $w_0=0$. In the following we use \eqref{eq:93} as definition for the constants $\alpha_0'$, $\beta_0'$.

\subsection{Equations for $\pp{x}{u}$, $\pp{x}{v}$}
In the following we consider a point in $T_\varepsilon$ with coordinates $(u,v)$. We have
\begin{align}
  \label{eq:94}
  \pp{x}{u}(u,v)=\pp{x}{u}(u,u)+\int_u^v\pppp{x}{u}{v}(u,v')dv'.
\end{align}
This corresponds to an integration along a right moving characteristic, starting on the left moving shock $\{u=v\}$ and ending at $(u,v)$. We rewrite the first term on the right as
\begin{align}
  \label{eq:95}
  \pp{x}{u}(u,u)&=\frac{1}{\ls{\Gamma}(u)}\pp{x}{v}(u,u)\nonumber\\
                &=\frac{1}{\ls{\Gamma}(u)}\left(\pp{x}{v}(au,u)+\int_{au}^u\pppp{x}{u}{v}(u',u)du'\right)\nonumber\\
                &=\frac{1}{\ls{\Gamma}(u)}\left(\rs{\Gamma}(u)\pp{x}{u}(au,u)+\int_{au}^u\pppp{x}{u}{v}(u',u)du'\right),
\end{align}
where for the first equality we use the boundary condition on the left moving shock, i.e.~the first of \eqref{eq:34}, for the second equality we integrate along a left moving characteristic starting at the right moving shock and ending on the left moving shock and for the last equality we use the boundary condition on the right moving shock, i.e.~the second of \eqref{eq:34}. Substituting \eqref{eq:95} into \eqref{eq:94} we obtain
\begin{align}
  \label{eq:96}
  \pp{x}{u}(u,v)&=\frac{\rs{\Gamma}(u)}{\ls{\Gamma}(u)}\,\pp{x}{u}(au,u)\nonumber\\
  &\qquad+\frac{1}{\ls{\Gamma}(u)}\int_{au}^u\pppp{x}{u}{v}(u',u)du'+\int_u^v\pppp{x}{u}{v}(u,v')dv'.
\end{align}
The path of integration, as given by the two integrals in this equation, are shown in figure \ref{integration_paths} on the left.

We have
\begin{align}
  \label{eq:97}
  \pp{x}{v}(u,v)=\pp{x}{v}(av,v)+\int_{av}^u\pppp{x}{u}{v}(u',v)du'.
\end{align}
This corresponds to integrating along a left moving characteristic from the right moving shock to the point $(u,v)$. We rewrite the first term on the right as
\begin{align}
  \label{eq:98}
  \pp{x}{v}(av,v)&=\rs{\Gamma}(v)\pp{x}{u}(av,v)\nonumber\\
                 &=\rs{\Gamma}(v)\left(\pp{x}{v}(av,av)+\int_{av}^v\pppp{x}{u}{v}(av,v')dv'\right)\nonumber\\
                 &=\rs{\Gamma}(v)\left(\frac{1}{\ls{\Gamma}(av)}\pp{x}{v}(av,av)+\int_{av}^v\pppp{x}{u}{v}(av,v')dv'\right),
\end{align}
where we again used the boundary conditions \eqref{eq:34}. Substituting \eqref{eq:98} into \eqref{eq:97} we obtain
\begin{align}
  \label{eq:99}
  \pp{x}{v}(u,v)&=\frac{\rs{\Gamma}(v)}{\ls{\Gamma}(av)}\,\pp{x}{v}(av,av)\nonumber\\
  &\qquad+\rs{\Gamma}(v)\int_{av}^v\pppp{x}{u}{v}(av,v')dv'+\int_{av}^u\pppp{x}{u}{v}(u',v)du'.
\end{align}
The path of integration, as given by the two integrals in this equation, is shown in figure \ref{integration_paths} on the right.

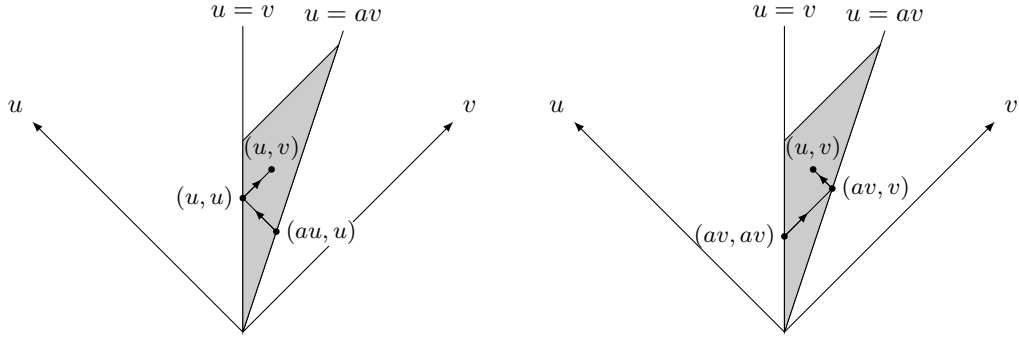
\begin{figure}[h!]
  \centering
  \contourlength{4pt}
\usetikzlibrary{decorations.markings}
\tikzset{->-/.style={decoration={
  markings,
  mark=at position .5 with {\arrow{>}}},postaction={decorate}}}
\begin{tikzpicture}[scale=1.8]

  \begin{scope}[rotate=45]
    \draw[-latex](1.15,0)--(2.2,0)node[anchor=south west]{$v$};
    \draw(0,0)--(.9,0);
    \draw[-latex](0,0)--(0,2.2)node[anchor=south east]{$u$};
    \draw(0,0)--(1.6,1.6)node[anchor=south]{$u=v$};
    \draw(0,0)--(2.1,1.05)node[anchor=south]{$u=av$};
    \draw[fill=gray!40](0,0)--(2,1)--(1,1)--cycle;
    \draw[-latex](0.7,0.7)--(.9,0.7);
    \draw(0.7,0.7)--(1,0.7);
    \draw[-latex](0.7,0.35)--(0.7,0.58);
    \draw(0.7,0.35)--(0.7,0.7);
    \node at (1,0.7)[anchor=south]{\small $(u,v)$};
    \node at (0.7,0.7)[anchor=east]{\small$(u,u)$};
    \node at (0.7,.35)[anchor=west]{\small$(au,u)$};
    \fill (1,0.7) circle[radius=.7pt];
    \fill (.7,0.7) circle[radius=.7pt];
    \fill (0.7,.35)circle[radius=.7pt];
  \end{scope}

  \begin{scope}[xshift=4cm,rotate=45]
    \draw[-latex](0,0)--(2.2,0)node[anchor=south west]{$v$};
    \draw[-latex](0,0)--(0,2.2)node[anchor=south east]{$u$};
    \draw(0,0)--(1.6,1.6)node[anchor=south]{$u=v$};
    \draw(0,0)--(2.1,1.05)node[anchor=south]{$u=av$};
    \draw[fill=gray!40](0,0)--(2,1)--(1,1)--cycle;
    \draw[-latex](1,.5)--(1,.65);
    \draw(.5,.5)--(1,.5)--(1,0.7);
    \draw[-latex](.5,.5)--(.75,.5);
    \node at (1,0.7)[anchor=south]{\small $(u,v)$};
    \node at (1,.5)[anchor=west]{\small$(av,v)$};
    \node at (.5,.5)[anchor=east]{\small$(av,av)$};
    \fill (1,0.7) circle[radius=.7pt];
    \fill (1,0.5) circle[radius=.7pt];
    \fill (.5,.5) circle[radius=.7pt];
  \end{scope}

\end{tikzpicture}

  \caption{Integration path for $\pp{x}{u}(u,v)$ on the left, as in equation \eqref{eq:96}. Integration path for $\pp{x}{v}(u,v)$ on the right, as in equation \eqref{eq:99}.
  \label{integration_paths}}
\end{figure}

Taking the derivative of the first of \eqref{eq:27} with respect to $u$ and of the second of \eqref{eq:34} with respect to $v$ and subtracting the resulting expressions, we obtain
\begin{align}
  \label{eq:100}
  \pppp{x}{u}{v}=\frac{1}{\cout-\cin}\left(\frac{\cout}{\cin}\pp{\cin}{v}\pp{x}{u}-\frac{\cin}{\cout}\pp{\cout}{u}\pp{x}{v}\right).
\end{align}
Defining
\begin{align}
  \label{eq:101}
  \mu\defeq \frac{1}{\cout-\cin}\,\frac{\cout}{\cin}\pp{\cin}{v},\qquad \nu\defeq-\frac{1}{\cout-\cin}\,\frac{\cin}{\cout}\pp{\cout}{u},
\end{align}
this is
\begin{align}
  \label{eq:102}
  \pppp{x}{u}{v}(u,v)=\mu(u,v)\pp{x}{u}(u,v)+\nu(u,v)\pp{x}{v}(u,v).
\end{align}

\subsection{Formulation of the Problem\label{shock_interaction_problem}}
The shock interaction problem is the following: Find a solution of the equations
\begin{align}
  \label{eq:103}
  \pp{x}{u}&=\cin\pp{t}{u},\\
  \label{eq:104}
  \pp{x}{v}&=\cout\pp{t}{v},\\
  \label{eq:105}
  \pp{\alpha}{v}&=0,\\
  \label{eq:106}
  \pp{\beta}{u}&=0
\end{align}
in $T_\varepsilon$, such that the boundary conditions along the shocks
\begin{align}
  \label{eq:107}
  \frac{d\ls{x}_+}{du}(u)&=\ls{V}(u)\frac{d\ls{t}_+}{du}(u),\\
  \label{eq:108}
  \frac{d\rs{x}_+}{dv}(v)&=\rs{V}(v)\frac{d\rs{t}_+}{dv}(v)
\end{align}
are satisfied. Here
\begin{alignat}{3}
  \label{eq:109}
  \ls{x}_+(u)&=x(u,u),&\qquad \ls{t}_+(u)&=t(u,u),\\
  \rs{x}_+(v)&=x(av,v),& \rs{t}_+(v)&=t(av,v)\label{eq:110}
\end{alignat}
and
\begin{align}
  \label{eq:111}
  \is{V}=\frac{\jump{\is{\rho}\is{w}}}{\jump{\is{\rho}}},\qquad i=1,2,
\end{align}
where
\begin{alignat}{3}
  \label{eq:112}
  \ls{\rho}_\pm(u)&=\rho(\ls{\alpha}_\pm(u),\ls{\beta}_\pm(u)),&\qquad \ls{w}_\pm(u)&=w(\ls{\alpha}_\pm(u),\ls{\beta}_\pm(u)),\\
    \rs{\rho}_\pm(v)&=\rho(\rs{\alpha}_\pm(v),\rs{\beta}_\pm(v)),\qquad &\ls{w}_\pm(v)&=w(\ls{\alpha}_\pm(v),\ls{\beta}_\pm(v)),\label{eq:113}
  \end{alignat}
where $\rho(\alpha,\beta)$, $w(\alpha,\beta)$ are given smooth functions of their arguments and
\begin{alignat}{3}
  \label{eq:114}
  \ls{\alpha}_+(u)&=\alpha(u,u),\qquad&\ls{\beta}_+(u)&=\beta(u,u),\\
  \label{eq:115}
  \ls{\alpha}_-(u)&=\ls{\alpha}^\ast(\ls{t}_+(u),\ls{x}_+(u)),\qquad&\ls{\beta}_-(u)&=\ls{\beta}^\ast(\ls{t}_+(u),\ls{x}_+(u))
\end{alignat}
and
\begin{alignat}{3}
  \label{eq:116}
  \rs{\alpha}_+(v)&=\alpha(av,v),&\qquad\rs{\beta}_+(v)&=\beta(av,v),\\
  \label{eq:117}
  \rs{\alpha}_-(v)&=\rs{\alpha}^\ast(\rs{t}_+(v),\rs{x}_+(v)),&\rs{\beta}_-(v)&=\rs{\beta}^\ast(\rs{t}_+(v),\rs{x}_+(v)).
\end{alignat}
Furthermore, the jump conditions
\begin{align}
  \label{eq:118}
  J(\is{\alpha}_+,\is{\beta}_+,\is{\alpha}_-,\is{\beta}_-)=0,\qquad i=1,2
\end{align}
are satisfied and at the point of interaction $(u,v)=(0,0)$ we have
\begin{align}
  \label{eq:119}
  \alpha(0,0)&=\beta(0,0)=\beta_0,\qquad \pp{\alpha}{u}(0,0)=\alpha_0',\qquad\pp{\beta}{v}(0,0)=\beta_0',\\
  \is{V}(0)&=\is{V}_0,\qquad i=1,2.\label{eq:120}
\end{align}
In addition, the determinism conditions have to be satisfied:
\begin{align}
  \label{eq:121}
  \ls{\cin}_+(u)&<\ls{V}(u)<\ls{\cin}_-(u),\\
  \rs{\cout}_-(v)&<\rs{V}(v)<\rs{\cout}_+(v),\label{eq:122}
\end{align}
where
\begin{align}
  \label{eq:123}
  \ls{\cin}_\pm(u)=\cin(\ls{\alpha}_\pm(u),\ls{\beta}_\pm(u)),\qquad \rs{\cout}_\pm(v)=\cout(\rs{\alpha}_\pm(v),\rs{\beta}_\pm(v)).
\end{align}

\section{Solution of the Interaction Problem}
We have the following theorem:
\begin{theorem}[Existence]\label{existence_theorem}
  For $\varepsilon$ sufficiently small, the shock interaction problem as stated in the previous subsection possesses a solution $\alpha$, $\beta$, $t$, $x$ in $C^2(T_\varepsilon)$. Also, for $\varepsilon$ sufficiently small, the Jacobian $\frac{\partial(t,x)}{\partial(u,v)}$ does not vanish in $T_\varepsilon$, which implies that $\alpha$, $\beta$ are $C^2$ functions of $(t,x)$ on the image of $T_\varepsilon$ by the map $(u,v)\mapsto (t(u,v),x(u,v))$.
\end{theorem}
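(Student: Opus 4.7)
The plan is to construct the solution by a contraction-mapping argument built on the integral equations \eqref{eq:96}, \eqref{eq:99} for the first derivatives of $x$, together with the jump-condition relations \eqref{eq:67}--\eqref{eq:68} for the Riemann invariants on the shocks. Because \eqref{eq:105}--\eqref{eq:106} force $\alpha$ to depend only on $u$ and $\beta$ only on $v$, the genuinely unknown single-variable functions are $\alpha(u)$ on $[0,a\varepsilon]$ and $\beta(v)$ on $[0,\varepsilon]$, accompanied by $t(u,v)$ and $x(u,v)$ on $T_\varepsilon$. All of these quantities carry prescribed $0$-jets at the corner, coming from \eqref{eq:28}, \eqref{eq:35}, \eqref{eq:45}, \eqref{eq:93} and \eqref{eq:119}--\eqref{eq:120}. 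I would work in a closed ball of a $C^2$-type Banach space $X$ in which these Taylor data are built in by subtraction, so that the iteration map preserves them automatically.

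The iteration map $\mathcal{T}$ would be defined in three sub-steps. \textbf{(A) Coordinate update:} given $(\alpha,\beta)$, equation \eqref{eq:102} expresses $\pppp{x}{u}{v}$ as a linear combination of $\pp{x}{u}$ and $\pp{x}{v}$ with coefficients $\mu,\nu$ that are smooth functions of $(\alpha,\beta)$; inserted into the integral identities \eqref{eq:96} and \eqref{eq:99}, this yields a coupled linear integral system for $(\pp{x}{u},\pp{x}{v})$ on $T_\varepsilon$ which is solved by its own inner contraction, after which $x$ and $t$ are obtained by integration from the origin using \eqref{eq:103}--\eqref{eq:104}. \textbf{(B) Pull-back of states ahead:} compose the given developments $\is{\alpha}^\ast,\is{\beta}^\ast$ with the shock traces $(\ls{t}_+,\ls{x}_+)$ and $(\rs{t}_+,\rs{x}_+)$ to obtain the updated $\is{\alpha}_-,\is{\beta}_-$ as in \eqref{eq:115}, \eqref{eq:117}. \textbf{(C) Rankine--Hugoniot update:} apply \eqref{eq:67}--\eqref{eq:68} to obtain new $\ls{\alpha}_+(u)$ and $\rs{\beta}_+(v)$; the compatibility identities \eqref{eq:48}--\eqref{eq:49} then extend these to $\alpha$ on $[0,a\varepsilon]$ and $\beta$ on $[0,\varepsilon]$.

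The contractivity is where $\varepsilon$ enters. In step (A) the integrations $\int_{au}^u$, $\int_{av}^v$ and $\int_u^v$ all have length $O(\varepsilon)$, so the induced map on $(\pp{x}{u},\pp{x}{v})$ and then on $(t,x)$ acquires an $O(\varepsilon)$ factor on differences. Step (C) is the more delicate one: linearising \eqref{eq:67}--\eqref{eq:68} at the corner and composing the two shock relations yields a linear response on the residual slopes $(\alpha_0',\beta_0')$ with principal coefficient $a\ls{F}_0\rs{F}_0=a^3<1$ by \eqref{eq:74} and \eqref{eq:92}, while the remaining contributions, involving $\ls{M}_i,\rs{M}_i$ and the variations of the shock traces, again pick up an $O(\varepsilon)$ factor by integration from the origin. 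Combining the three estimates, $\mathcal{T}$ is a strict contraction on a suitably small ball of $X$ once $\varepsilon$ is small, so the Banach fixed-point theorem produces a unique fixed point which is the desired $C^2$ solution.

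The hard part will be the treatment of the corner $(u,v)=(0,0)$. Because the data is given at a single point, naive iteration on function values does not close: both the pointwise values and the first derivatives at the origin must be reproduced by the map, not merely respected. This forces one to track the prescribed Taylor expansion explicitly and iterate on the remainder in a weighted norm; the identification of $\alpha_0',\beta_0'$ through the linear algebra of \eqref{eq:88}--\eqref{eq:93} is precisely what makes this bookkeeping consistent, and the strict sign $\det M=1-a^3>0$ of \eqref{eq:92} is what keeps the contraction constant below $1$. Once the $C^2$ fixed point is in hand, the Jacobian at the origin is a nonzero constant, computable from \eqref{eq:28} and \eqref{eq:45} as $-2/(\eta_0\Gamma_0)$, so by continuity it stays nonzero on $T_\varepsilon$ for small $\varepsilon$, and the strict determinism inequalities \eqref{eq:21}, \eqref{eq:23} extend from the origin to all of $T_\varepsilon$, giving \eqref{eq:121}--\eqref{eq:122}.
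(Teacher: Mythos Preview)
Your overall architecture---iterate on $(\alpha,\beta,x)$ with the corner Taylor data built in, use \eqref{eq:96}--\eqref{eq:99} for $x$, and use \eqref{eq:67}--\eqref{eq:68} for the Riemann invariants---matches the paper. But your contractivity analysis in step~(A) has a genuine gap.

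You claim that the inner map on $(\partial x/\partial u,\partial x/\partial v)$ is $O(\varepsilon)$-Lipschitz because the integrals in \eqref{eq:96}, \eqref{eq:99} have length $O(\varepsilon)$. This overlooks the \emph{non-integral} recursion terms: the leading term on the right of \eqref{eq:96} is $\ls{\gamma}(u)\,\partial x/\partial u(au,u)$ with $\ls{\gamma}(u)=\rs{\Gamma}(u)/\ls{\Gamma}(u)=1+O(u)$, and similarly $\rs{\gamma}(v)\,\partial x/\partial v(av,av)$ in \eqref{eq:99}. These terms carry an $O(1)$ coefficient, so the map on first derivatives is \emph{not} contractive with an $O(\varepsilon)$ constant. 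No amount of shrinking $\varepsilon$ kills those terms.

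The paper's resolution is to work one derivative higher: the norm $\|x\|_0$ in \eqref{eq:159} controls \emph{second} derivatives of $x$. Differentiating \eqref{eq:96} once more in $u$ sends the recursion term to $a\,\ls{\gamma}(u)\,\partial^2 x/\partial u^2(au,u)$, and the chain-rule factor $a<1$ is what produces contractivity (see the remark around \eqref{eq:153} and the estimates \eqref{eq:311}--\eqref{eq:313}, \eqref{eq:413}). So the smallness for the $x$-part of the iteration comes from $a$, not from $\varepsilon$; the $\varepsilon$-smallness only controls the cross-coupling to $(\ls{\alpha}_+,\rs{\beta}_+)$ and the lower-order remainders. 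Correspondingly, for step~(C) the paper also works at the second-derivative level, where the composed contraction factor on $(\ls{\alpha}_+,\rs{\beta}_+)$ is $a^2\ls{F}_0\rs{F}_0=a^4$ (cf.~\eqref{eq:333}--\eqref{eq:339} and the eigenvalue computation \eqref{eq:425}), rather than the $a^3$ you obtain from the first-derivative linearisation in \eqref{eq:92}. Your identification of the corner consistency and of the Jacobian $-2/(\eta_0\Gamma_0)$ is correct.
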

The proof of this theorem is based on an iteration scheme set up and employed in the following subsections.

\subsection{Setup of the Iteration Scheme}
We solve the interaction problem using an iteration. We base our iteration scheme on the functions $x(u,v)$, $\ls{\alpha}_+(u)$ and $\rs{\beta}_+(v)$. The iteration scheme is as follows:
\begin{enumerate}
\item We initiate the sequence by\footnote{The index $0$ on the left of these equations denotes the $0$'th iterate in our sequence of functions. This is the only place where the index $0$ is used in this way, everywhere else it is used to denote evaluation of quantities at the point of interaction (which corresponds to the origin of our coordinates), hence no confusion should arise.}
  \begin{align}
    x_0(u,v)=\frac{1}{\Gamma_0}u+v,\qquad \ls{\alpha}_{+0}(u)=\underbrace{\alpha_0}_{=\beta_0}+\alpha_0'u,\qquad \rs{\beta}_{+0}(v)=\beta_0+\beta_0'v.\label{eq:124}
  \end{align}
\item We start with approximate solutions
  \begin{align}
    \label{eq:125}
    x_n\in C^2(T_\varepsilon),\qquad \ls{\alpha}_{+n}\in C^2[0,a\varepsilon],\qquad \rs{\beta}_{+n}\in C^2[0,\varepsilon].
  \end{align}
\item We set
  \begin{align}
    \alpha_n(u,v)=\ls{\alpha}_{+n}(u),\qquad \beta_n(u,v)=\rs{\beta}_{+n}(v).\label{eq:126}
  \end{align}
\item We set
  \begin{align}
    t_n(u,v)=\int_0^u\left(\phi_n+\psi_n\right)(u',u')du'+\int_u^v\psi_n(u,v')dv',\label{eq:127}
  \end{align}
  where we set
\begin{align}
  \label{eq:128}
  \phi_n(u,v)&=\frac{1}{\cin_n(u,v)}\pp{x_n}{u}(u,v),\\
    \label{eq:129}
  \psi_n(u,v)&=\frac{1}{\cout_n(u,v)}\pp{x_n}{v}(u,v).
\end{align}
Here we use the notation
\begin{align}
  \label{eq:130}
  \cin_n(u,v)=\cin(\alpha_n(u,v),\beta_n(u,v)),\qquad\cout_n(u,v)=\cout(\alpha_n(u,v),\beta_n(u,v)).
\end{align}
\item We then set
  \begin{alignat}{2}
    \label{eq:131}
    \ls{t}_{+n}(u)&=t_n(u,u),&\qquad \ls{x}_{+n}(u)&=x_n(u,u),\\
    \rs{t}_{+n}(v)&=t_n(av,v),&\qquad \rs{x}_{+n}(v)&=x_n(av,v)\label{eq:132}
  \end{alignat}
and compute the quantities appearing in the jumps evaluated in the states ahead:
\begin{alignat}{3}
  \label{eq:133}
  \ls{\alpha}_{-n}(u)&=\ls{\alpha}^\ast(\ls{t}_{+n}(u),\ls{x}_{+n}(u)),\qquad &\ls{\beta}_{-n}(u)&=\ls{\beta}^\ast(\ls{t}_{+n}(u),\ls{x}_{+n}(u)),\\
  \rs{\alpha}_{-n}(v)&=\rs{\alpha}^\ast(\rs{t}_{+n}(v),\rs{x}_{+n}(v)),\qquad &\rs{\beta}_{-n}(v)&=\rs{\beta}^\ast(\rs{t}_{+n}(v),\rs{x}_{+n}(v)).\label{eq:134}
\end{alignat}
\item The quantities in the state behind are given by
  \begin{align}
    \label{eq:135}
    \ls{\beta}_{+n}(u)&=\beta_n(u,u)=\rs{\beta}_{+n}(u),\\
    \rs{\alpha}_{+n}(v)&=\alpha_n(av,v)=\ls{\alpha}_{+n}(av).\label{eq:136}
  \end{align}
Hence all quantities appearing in the jumps are given and we compute $\ls{V}_n$, $\rs{V}_n$ using
\begin{align}
  \label{eq:137}
  \is{V}_n=\frac{\jump{\is{\rho}_n\is{w}_n}}{\jump{\is{\rho}_n}},\qquad i=1,2,
\end{align}
where $\jump{f_n}=f_{+n}-f_{-n}$ and
\begin{align}
  \label{eq:138}
  \is{\rho}_{\pm n}=\rho(\is{\alpha}_{\pm n},\is{\beta}_{\pm n}),\qquad   \is{w}_{\pm n}=w(\is{\alpha}_{\pm n},\is{\beta}_{\pm n}),\qquad i=1,2.
\end{align}
From \eqref{eq:137} we compute
\begin{align}
  \label{eq:139}
  \ls{\Gamma}_n=\frac{\ls{\cout}_{+n}}{\ls{\cin}_{+n}}\,\frac{\ls{V}_n-\ls{\cin}_{+n}}{\ls{\cout}_{+n}-\ls{V}_n},\qquad \rs{\Gamma}_n=a\frac{\rs{\cout}_{+n}}{\rs{\cin}_{+n}}\,\frac{\rs{V}_n-\rs{\cin}_{+n}}{\rs{\cout}_{+n}-\rs{V}_n},
\end{align}
where
\begin{align}
  \label{eq:140}
  \is{\cin}_{+n}=\cin(\is{\alpha}_{+n},\is{\beta}_{+n}),\qquad  \is{\cout}_{+n}=\cout(\is{\alpha}_{+n},\is{\beta}_{+n}),\qquad i=1,2.
\end{align}
\item We set
\begin{align}
  \label{eq:141}
  x_{n+1}(u,v)&=\frac{1}{\Gamma_0}u+v+\int_0^u\ls{\Phi}_n(u')du'+\int_0^v\rs{\Phi}_n(v')dv'\nonumber\\
              &\quad+\int_0^u\int_{av'}^{v'}M_n(u',v')du'dv'+\int_u^v\int_{av'}^uM_n(u',v')du'dv'.
\end{align}
where
\begin{align}
  \label{eq:142}
  \ls{\Phi}_n(u)&=\int_0^{u}\ls{\Lambda}_n(u')du'+\frac{1}{\ls{\Gamma}_n(u)}\int_{au}^{u}M_n(u',u)du',\\
  \rs{\Phi}_n(v)&=\int_0^{v}\rs{\Lambda}_n(v')dv'+\rs{\Gamma}_n(v)\int_{av}^{v}M_n(av,v')dv'.  \label{eq:143}
\end{align}
where we set
\begin{align}
  M_n(u,v)&=\mu_n(u,v)\pp{x_n}{u}(u,v)+\nu_n(u,v)\pp{x_n}{v}(u,v),  \label{eq:144}
\end{align}
where
\begin{align}
    \label{eq:145}
  \mu_n&=\frac{1}{\cout_n-\cin_n}\frac{\cout_n}{\cin_n}\pp{\cin_n}{v},\qquad \nu_n=-\frac{1}{\cout_n-\cin_n}\frac{\cin_n}{\cout_n}\pp{\cout_n}{u}\\\intertext{and}
  \ls{\Lambda}_n(u)&=\frac{d\ls{\gamma}_n}{du}(u)\pp{x_n}{u}(au,u)+\ls{\gamma}_n(u)a\ppp{x_n}{u}(au,u)+\ls{\gamma}_n(u)M_n(au,u),\label{eq:146}\\
  \label{eq:147}
  \rs{\Lambda}_n(v)&=\frac{d\rs{\gamma}_n}{dv}(v)\pp{x_n}{v}(av,av)+\rs{\gamma}_n(v)a\ppp{x_n}{v}(av,av)+\rs{\gamma}_n(v)aM_n(av,av)
\end{align}
and we used the definitions
\begin{align}
  \label{eq:148}
  \ls{\gamma}_n(u)\defeq\frac{\rs{\Gamma}_n(u)}{\ls{\Gamma}_n(u)},\qquad \rs{\gamma}_n(v)\defeq\frac{\rs{\Gamma}_n(v)}{\ls{\Gamma}_n(av)}.
\end{align}
In \eqref{eq:145}, we use the notation
\begin{align}
  \label{eq:149}
  \cout_n(u,v)=\cout(\alpha_n(u,v),\beta_n(u,v)),\qquad\cin_n(u,v)=\cin(\alpha_n(u,v),\beta_n(u,v)).
\end{align}

\item We compute $\ls{\alpha}_{+,n+1}(u)$, $\rs{\beta}_{+,n+1}(v)$ as
  \begin{align}
    \label{eq:150}
    \ls{\alpha}_{+,n+1}(u)&=\ls{H}(\ls{\beta}_{+n}(u),\ls{\alpha}_{-n}(u),\ls{\beta}_{-n}(u)),\\
    \rs{\beta}_{+,n+1}(v)&=\rs{H}(\rs{\alpha}_{+n}(v),\rs{\alpha}_{-n}(v),\rs{\beta}_{-n}(v)).\label{eq:151}
  \end{align}
\end{enumerate}

\subsection{Remarks on the Iteration Scheme}
\begin{enumerate}
\item The way we set things up, to each triple of functions $\ls{\alpha}_{+n}$, $\rs{\beta}_{+n}$, $x_n$ there corresponds a unique triple of functions $t_n$, $\alpha_n$, $\beta_n$ (see \eqref{eq:126}, \eqref{eq:127}). It therefore suffices to show that the iteration mapping maps the respective spaces to itself (by induction) and the convergence only for the sequence $((\ls{\alpha}_{+n},\rs{\beta}_{+n},x_n);n=0,1,2,\ldots)$.
\item From \eqref{eq:126} we see that for every member of the sequence the equations \eqref{eq:105}, \eqref{eq:106} are satisfied. I.e.
\begin{align}
  \label{eq:152}
  \pp{\alpha_n}{v}=0,\qquad \pp{\beta_n}{u}=0.
\end{align}
\item It will be shown in section \ref{inductive_step} that taking the partial derivative of \eqref{eq:141} with respect to $u$ yields an equation of the form
\begin{align}
  \label{eq:153}
  \ppp{x_{n+1}}{u}(u,v)=\ls{\gamma}_n(u)a\ppp{x_n}{u}(au,u)+\ldots
\end{align}
The terms not written out are either of mixed derivative type, of lower order, or mixed type third derivatives but appearing in an integral. Equation \eqref{eq:153} corresponds to \eqref{eq:96} once a partial derivative with respect to $u$ is taken and the $n+1$'th iterate is placed in the left hand side and the $n$'th iterate is placed in the right hand side. Since $0<a<1$ and $\ls{\gamma}_n(0)=1$, the overall prefactor in front of $\ppp{x_n}{u}(au,u)$ can be made strictly smaller than one, which is crucial in establishing that the sequence of iterates is suitably confined and that it is convergent.
\item In step 5 of our iteration scheme we have to make sure that the shock curves in space time corresponding to the $n$'th iterate:
  \begin{align}
    \label{eq:154}
    u&\mapsto (\ls{t}_{+n}(u),\ls{x}_{+n}(u)),\qquad u\in[0,a\varepsilon]\\
    v&\mapsto (\rs{t}_{+n}(v),\rs{x}_{+n}(v)),\qquad v\in[0,\varepsilon]\label{eq:155}
  \end{align}
are in the future development of the respective data sets (see figure \ref{future_development} on page \pageref{future_development}). Otherwise the evaluation of the quantities $\is{\alpha}^\ast(\is{t}_{+n},\is{x}_{+n})$, $\is{\beta}^\ast(\is{t}_{+n},\is{x}_{+n})$, $i=1,2$, would be meaningless.
\end{enumerate}

\subsection{Inductive Step\label{inductive_step}}
We choose closed balls in function spaces as follows:
\begin{align}
    \label{eq:156}
  B_{N_0}&=\Big\{f\in C^2(T_\varepsilon):f(0,0)=0,\pp{f}{u}(0,0)=\frac{1}{\Gamma_0},\pp{f}{v}(0,0)=1,\|f\|_0\leq N_0\Big\},\\
    B_{A}&=\Big\{f\in C^2[0,a\varepsilon]:f(0)=\beta_0,f'(0)=\alpha_0',\|f\|_1\leq A\Big\},\label{eq:157}\\
  \label{eq:158}
  B_{B}&=\Big\{f\in C^2[0,\varepsilon]:f(0)=\beta_0,f'(0)=\beta_0',\|f\|_2\leq B\Big\},
\end{align}
where we use
\begin{align}
  \label{eq:159}
  \|f\|_0&\defeq\max\left\{\sup_{T_\varepsilon}\left|\ppp{f}{u}\right|,\sup_{T_\varepsilon}\left|\pppp{f}{u}{v}\right|,\sup_{T_\varepsilon}\left|\ppp{f}{v}\right|\right\},\\
  \label{eq:160}
  \|f\|_1&\defeq\sup_{[0,a\varepsilon]}|f''|,\\
  \label{eq:161}
  \|f\|_2&\defeq\sup_{[0,\varepsilon]}|f''|.
\end{align}
For the constants $\alpha_0'$, $\beta_0'$ see \eqref{eq:93}.
\begin{proposition}
  Choosing the constants $A$, $B$, $N_0$ appropriately, the sequence
  \begin{align}
    \label{eq:162}
    (\ls{\alpha}_{+n},\rs{\beta}_{+n},x_n);n=0,1,2,\ldots)   
  \end{align}
is contained in $B_A\times B_B\times B_{N_0}$, provided we choose $\varepsilon$ sufficiently small.
\end{proposition}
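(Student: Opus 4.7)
The plan is to proceed by induction on $n$. The base case is immediate since the iterates in \eqref{eq:124} are affine, so all second derivatives vanish and the anchor conditions defining $B_A$, $B_B$, $B_{N_0}$ hold for any positive $A$, $B$, $N_0$. For the inductive step I assume $(\ls{\alpha}_{+n}, \rs{\beta}_{+n}, x_n) \in B_A \times B_B \times B_{N_0}$ and carry out three stages.

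In the first stage I verify the anchor conditions at the origin for the $(n+1)$st iterate. Setting $(u,v) = (0,0)$ in \eqref{eq:141}, all integrals vanish, giving $x_{n+1}(0,0) = 0$. Differentiating once in $u$, the two boundary contributions $\pm \int_{au}^u M_n(u',u)\,du'$ arising from the two double integrals cancel exactly, leaving $\pp{x_{n+1}}{u}(u,v) = 1/\Gamma_0 + \ls{\Phi}_n(u) + \int_u^v M_n(u,v')\,dv'$, which equals $1/\Gamma_0$ at the origin; an analogous computation yields $\pp{x_{n+1}}{v}(0,0) = 1$. For the boundary iterates, $\ls{\alpha}_{+,n+1}(0) = \beta_0$ follows from \eqref{eq:64}, and differentiating \eqref{eq:150} at $u = 0$ and using \eqref{eq:83}--\eqref{eq:85} reproduces the linear relation \eqref{eq:77}, so $(\ls{\alpha}_{+,n+1})'(0) = \alpha_0'$ via \eqref{eq:93}; the case of $\rs{\beta}_{+,n+1}$ is analogous.

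The second stage, the $\|x_{n+1}\|_0$ bound, is the crux. Differentiating \eqref{eq:141} twice in $u$ yields an identity of the form
\begin{align*}
\ppp{x_{n+1}}{u}(u,v) = \ls{\gamma}_n(u)\,a\,\ppp{x_n}{u}(au,u) + R_n(u,v),
\end{align*}
where $R_n$ collects the contributions of $M_n$, $\tfrac{d}{du}[1/\ls{\Gamma}_n]$, $\tfrac{d\ls{\gamma}_n}{du}$, and integrals of $\pp{M_n}{u}$, $\pp{M_n}{v}$. The critical observation is that $R_n$ evaluated at the origin depends only on the problem data $\beta_0, \alpha_0', \beta_0', \Gamma_0, \eta_0$ and the fixed function $\eta$, hence is bounded by an absolute constant $C_0$ independent of $A$, $B$, $N_0$, while the variation of $R_n$ over $T_\varepsilon$ is at most $\varepsilon \cdot K(A,B,N_0)$. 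Since $\ls{\gamma}_n(0) = 1$ by \eqref{eq:35}, the prefactor satisfies $|\ls{\gamma}_n(u)\,a| \leq (1+a)/2 < 1$ for $\varepsilon$ small, so $\sup_{T_\varepsilon}|\ppp{x_{n+1}}{u}| \leq \tfrac{1+a}{2} N_0 + C_0 + \varepsilon K(A,B,N_0)$, which is $\leq N_0$ once $N_0$ is chosen $\geq 2C_0/(1-a)$ and $\varepsilon$ is small. The bounds on $\ppp{x_{n+1}}{v}$ and $\pppp{x_{n+1}}{u}{v}$ follow by the same mechanism applied to \eqref{eq:99}.

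In the third stage, differentiating \eqref{eq:150}, \eqref{eq:151} twice yields
\begin{align*}
|\ls{\alpha}_{+,n+1}''(u)| &\leq |\ls{F}_0|\,\|\rs{\beta}_{+n}\|_2 + C_1(N_0) + \varepsilon K_1(A,B,N_0),\\
|\rs{\beta}_{+,n+1}''(v)| &\leq |\rs{F}_0|\,\|\ls{\alpha}_{+n}\|_1 + C_2(N_0) + \varepsilon K_2(A,B,N_0),
\end{align*}
the leading terms coming from the principal parts of \eqref{eq:69}, \eqref{eq:72} (using $\ls{\beta}_{+n} = \rs{\beta}_{+n}$ along $u = v$) and the remainders controlled by $N_0$ through the second derivatives of $\ls{t}_{+n}$, $\ls{x}_{+n}$. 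The coupled requirements $|\ls{F}_0| B + C_1(N_0) \leq A$ and $|\rs{F}_0| A + C_2(N_0) \leq B$ admit positive solutions precisely because $|\ls{F}_0\rs{F}_0| = a^2 < 1$ by \eqref{eq:74}. The quantifier order is therefore: fix $N_0$ (absorbing $C_0$), then $A$ and $B$ (using $a^2 < 1$), and finally $\varepsilon$ small enough to absorb all $\varepsilon$-terms and to ensure that the shock curves \eqref{eq:154}, \eqref{eq:155} remain in the future developments (automatic for small $\varepsilon$ by continuity, since the shock tangents at the origin lie in the respective future developments by hypothesis). The principal obstacle is the Stage 2 identity above: extracting the clean contraction prefactor $\ls{\gamma}_n(u)\,a$ requires the careful cancellation of boundary terms after the first $u$-differentiation and the verification that $R_n(0,0)$ is an absolute constant; the Stage 3 coupling is standard once \eqref{eq:74} is invoked.
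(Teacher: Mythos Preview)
Your proposal is correct and follows essentially the same inductive strategy as the paper: the anchor conditions at the origin, the contraction $\ppp{x_{n+1}}{u}\approx a\,\ls{\gamma}_n\,\ppp{x_n}{u}$ with $\ls{\gamma}_n(0)=1$ and $0<a<1$, and the coupled closure for $(A,B)$ via $|\ls{F}_0\rs{F}_0|=a^2<1$. The paper's only refinements over your sketch are the explicit verification \eqref{eq:240}--\eqref{eq:257} that the iterated shock curves stay in the future developments, and the sharper bound $|\rs{\beta}_{+,n+1}''|\leq (a^2|\rs{F}_0|+C\varepsilon)A+C$ obtained from $\rs{\alpha}_{+n}(v)=\ls{\alpha}_{+n}(av)$ (you wrote $|\rs{F}_0|\,\|\ls{\alpha}_{+n}\|_1$, which still closes but with larger constants).
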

\begin{proof}
Since we initiate the sequence by (see \eqref{eq:124})
\begin{align}
    x_0(u,v)=\frac{1}{\Gamma_0}u+v,\qquad \ls{\alpha}_{+0}(v)=\beta_0+\alpha_0'v,\qquad \rs{\beta}_{+0}(v)=\beta_0+\beta_0'v,\label{eq:163}
\end{align}
we see that
\begin{align}
  \label{eq:164}
  (\ls{\alpha}_{+0},\rs{\beta}_{+0},x_0)\in B_A\times B_B\times B_{N_0}.
\end{align}

Let now
\begin{align}
  \label{eq:165}
  (\ls{\alpha}_{+n},\rs{\beta}_{+n},x_n)\in B_A\times B_B\times B_{N_0}.
\end{align}
We have to show that from this it follows that
\begin{align}
  \label{eq:166}
  (\ls{\alpha}_{+,n+1},\rs{\beta}_{+,n+1},x_{n+1})\in B_A\times B_B\times B_{N_0}.
\end{align}
We first derive estimates for $\alpha_n(u,v)$ and $\beta_n(u,v)$ and derivatives thereof. The inductive hypothesis for $\ls{\alpha}_{+n}$ is
\begin{align}
  \label{eq:167}
  \ls{\alpha}_{+n}(0)=\beta_0,\qquad \frac{d\ls{\alpha}_{+n}}{du}(0)=\alpha_0',\qquad \sup_{[0,a\varepsilon]}\left|\frac{d^2\ls{\alpha}_{+n}}{du^2}\right|\leq A.
\end{align}
From this we obtain
\begin{align}
  \label{eq:168}
  \bigg|\frac{d\ls{\alpha}_{+n}}{du}(u)-\alpha_0'\bigg|\leq \left|\int_0^u\frac{d^2\ls{\alpha}_{+n}}{du^2}(u')du'\right|\leq Au.
\end{align}
In the following we use the notation $f(u)=\Landau_d(u^m)$ to denote
\begin{align}
  \label{eq:169}
  |f(u)|\leq C(d)u^m,
\end{align}
where $C(d)$ is a non-decreasing, continuous function of $d$. Using this, \eqref{eq:168} implies
\begin{align}
  \label{eq:170}
  \frac{d\ls{\alpha}_{+n}}{du}(u)=\alpha_0'+\Landau_A(u).
\end{align}
Similarly, we obtain
\begin{align}
  \label{eq:171}
  \frac{d\rs{\beta}_{+n}}{dv}(v)=\beta_0'+\Landau_B(v).
\end{align}
From these we obtain
\begin{align}
  \label{eq:172}
  \ls{\alpha}_{+n}(u)&=\beta_0+\alpha_0'u+\Landau_A(u^2),\\
  \rs{\beta}_{+n}(v)&=\beta_0+\beta_0'v+\Landau_B(v^2).\label{eq:173}
\end{align}
Now, for any function $f(u)=\Landau_d(u^m)$ we have
\begin{align}
  \label{eq:174}
  |f(u)|\leq C(d)u^m\leq Cu^{m-1},
\end{align}
provided we choose $\varepsilon$ sufficiently small depending on $d$, where on the right we have a fixed numerical constant. I.e.~$f(u)=\Landau(u^{m-1})$ (without an index on the Landau symbol). Hence, from \eqref{eq:172}, \eqref{eq:173} we have
\begin{align}
  \label{eq:175}
  \ls{\alpha}_{+n}(u)&=\beta_0+\Landau(u),\\
  \rs{\beta}_{+n}(v)&=\beta_0+\Landau(v),\label{eq:176}
\end{align}
provided we choose $\varepsilon$ sufficiently small. In the following, we are going to use smallness conditions on $\varepsilon$ of this type without mentioning it any further. Since (see \eqref{eq:126})
\begin{align}
  \label{eq:177}
      \alpha_n(u,v)=\ls{\alpha}_{+n}(u),\qquad \beta_n(u,v)=\rs{\beta}_{+n}(v).
\end{align}
we have
\begin{align}
  \label{eq:178}
  \alpha_n(u,v)&=\beta_0+\alpha_0'u+\Landau_A(u^2)=\beta_0+\Landau(v),\\
  \beta_n(u,v)&=\beta_0+\beta_0'v+\Landau_B(v^2)=\beta_0+\Landau(v),\label{eq:179}
\end{align}
where for the second equality in \eqref{eq:178} we made use of $u\leq v$ in $T_\varepsilon$. Taking partial derivatives of \eqref{eq:177} and using \eqref{eq:170}, \eqref{eq:171} we obtain
\begin{align}
  \label{eq:180}
    \pp{\alpha_n}{u}(u,v)=\alpha_0'+\Landau_A(v),\qquad \pp{\beta_n}{v}(u,v)=\beta_0'+\Landau_B(v).
\end{align}
The other first derivatives of $\alpha_n$, $\beta_n$ vanish. In particular, first order derivatives of $\alpha_n$, $\beta_n$ are bounded by a fixed constant, provided we choose $\varepsilon$ sufficiently small. Taking second derivatives of \eqref{eq:177}, using the third of \eqref{eq:167} and the analogous induction hypothesis for $\rs{\beta}_{+n}$, we obtain
\begin{align}
  \label{eq:181}
    \left|\ppp{\alpha_n}{u}(u,v)\right|\leq A,\qquad \left|\ppp{\beta_n}{v}(u,v)\right|\leq B.
\end{align}
All other second derivatives of $\alpha_n$, $\beta_n$ vanish.

Since
\begin{align}
  \label{eq:182}
  \rs{\alpha}_{+n}(v)=\alpha_n(av,v)=\ls{\alpha}_{+n}(av),
\end{align}
we have, using \eqref{eq:167}, \eqref{eq:170}, \eqref{eq:172},
\begin{align}
  \label{eq:183}
  \rs{\alpha}_{+n}(v)=\beta_0+a\alpha_0'v+\Landau_A(v^2),\qquad \frac{d\rs{\alpha}_{+n}}{dv}(v)=a\alpha_0'+\Landau_A(v),\qquad\left|\frac{d^2\rs{\alpha}_{+n}}{dv^2}(v)\right|\leq a^2A.
\end{align}
Since
\begin{align}
  \label{eq:184}
  \ls{\beta}_{+n}(u)=\beta_n(u,u)=\rs{\beta}_{+n}(u),
\end{align}
we have, using the induction hypothesis for $\rs{\beta}_{+n}(v)$ and \eqref{eq:171}, \eqref{eq:173},
\begin{align}
  \label{eq:185}
  \ls{\beta}_{+n}(u)=\beta_0+\beta_0'u+\Landau_B(u^2),\qquad \frac{d\ls{\beta}_{+n}}{du}(u)=\beta_0'+\Landau_B(u),\qquad\left|\frac{d^2\ls{\beta}_{+n}}{du^2}(u)\right|\leq B.
\end{align}

We derive properties of $x_n(u,v)$. The inductive hypothesis for $x_n$ is
\begin{align}
  \label{eq:186}
  x_n(0,0)=0,\qquad \pp{x_n}{u}(0,0)=\frac{1}{\Gamma_0},\qquad\pp{x_n}{v}(0,0)&=1,\\
    \label{eq:187}
  \sup_{T_\varepsilon}\left|\ppp{x_n}{u}\right|,\sup_{T_\varepsilon}\left|\pppp{x_n}{u}{v}\right|,\sup_{T_\varepsilon}\left|\ppp{x_n}{v}\right|&\leq N_0.
\end{align}
$x_n(u,v)$ satisfies
\begin{align}
  \label{eq:188}
  \pp{x_n}{u}(u,v)&=\pp{x_n}{u}(0,0)+\int_0^u\left(\ppp{x_n}{u}+\pppp{x_n}{u}{v}\right)(u',u')du'+\int_u^v\pppp{x_n}{u}{v}(u,v')dv',\\
  \label{eq:189}
  \pp{x_n}{v}(u,v)&=\pp{x_n}{v}(0,0)+\int_0^u\left(\pppp{x_n}{u}{v}+\ppp{x_n}{v}\right)(u',u')du'+\int_u^v\ppp{x_n}{v}(u,v')dv'.
\end{align}
These equations correspond to integrating the second derivative of $x_n(u,v)$ from the origin $(0,0)$ along $u=v$ until $(u,u)$ and then integrating along a right moving  characteristic until $(u,v)$. Using the inductive hypothesis for $x_n$ we obtain
\begin{align}
  \label{eq:190}
  \pp{x_n}{u}(u,v)&=\frac{1}{\Gamma_0}+\Landau_{N_0}(v),\\
  \label{eq:191}
  \pp{x_n}{v}(u,v)&=1+\Landau_{N_0}(v).
\end{align}
Integrating the first derivative of $x_n(u,v)$ from the origin $(0,0)$ along $u=v$ until $(u,u)$ and then integrating along a right moving characteristic until $(u,v)$ yields
\begin{align}
  \label{eq:192}
  x_n(u,v)=\int_0^u\left(\pp{x_n}{u}+\pp{x_n}{v}\right)(u',u')du'+\int_u^v\pp{x_n}{v}(u,v')dv'.
\end{align}
Using \eqref{eq:190}, \eqref{eq:191} we have
\begin{align}
  \label{eq:193}
  x_n(u,v)=\frac{1}{\Gamma_0}u+v+\Landau_{N_0}(v^2)=\Landau(v).
\end{align}

We derive properties of $t_n(u,v)$. We have (see \eqref{eq:127})
\begin{align}
  \label{eq:194}
      t_n(u,v)=\int_0^u\left(\phi_n+\psi_n\right)(u',u')du'+\int_u^v\psi_n(u,v')dv',
\end{align}
where
\begin{align}
  \label{eq:195}
  \phi_n=G(\alpha_n,\beta_n)\pp{x_n}{u},\qquad\psi_n=H(\alpha_n,\beta_n)\pp{x_n}{v},
\end{align}
and
\begin{align}
  \label{eq:196}
  G(\alpha,\beta)\defeq \frac{1}{\cin(\alpha,\beta)},\qquad H(\alpha,\beta)\defeq\frac{1}{\cout(\alpha,\beta)}.
\end{align}
We are going to use the notation
\begin{align}
  \label{eq:197}
  g_n(u,v)=G(\alpha_n(u,v),\beta_n(u,v)),\qquad h_n(u,v)=H(\alpha_n(u,v),\beta_n(u,v)).
\end{align}
$\cin(\alpha,\beta)$, $\cout(\alpha,\beta)$ are smooth functions of $\alpha$, $\beta$.  In view of \eqref{eq:178}, \eqref{eq:179} provided we choose $\varepsilon$ sufficiently small, so are the functions $G(\alpha,\beta)$, $H(\alpha,\beta)$. Therefore,
\begin{align}
  \label{eq:198}
  g_n(u,v)&=g_0+\left(\pp{G}{\alpha}\right)_0\alpha_0'u+\left(\pp{G}{\beta}\right)_0\beta_0'v+\Landau_A(u^2)+\Landau_B(v^2)+\Landau(uv)\nonumber\\
  &=-\frac{1}{\eta_0}+\Landau(v),\\
  \label{eq:199}
  h_n(u,v)&=h_0+\left(\pp{H}{\alpha}\right)_0\alpha_0'u+\left(\pp{H}{\beta}\right)_0\beta_0'v+\Landau_A(u^2)+\Landau_B(v^2)+\Landau(uv)\nonumber\\
  &=\frac{1}{\eta_0}+\Landau(v),
\end{align}
where we used (recall that $\alpha_0=\beta_0$)
\begin{align}
  \label{eq:200}
  g_0=\frac{1}{\cin_0}=\frac{1}{\cin(\beta_0,\beta_0)}=-\frac{1}{\eta_0},\qquad h_0=\frac{1}{\cout_0}=\frac{1}{\cout(\beta_0,\beta_0)}=\frac{1}{\eta_0}.
\end{align}
We have
\begin{align}
  \label{eq:201}
  \pp{h_n}{u}=\pp{H}{\alpha}(\alpha_n,\beta_n)\pp{\alpha_n}{u},\qquad \pp{h_n}{v}=\pp{H}{\beta}(\alpha_n,\beta_n)\pp{\beta_n}{v},
\end{align}
which, through \eqref{eq:180}, implies
\begin{align}
  \label{eq:202}
  \pp{h_n}{u}(u,v)=\left(\pp{H}{\alpha}\right)_0\alpha_0'+\Landau_A(v),\qquad\pp{h_n}{v}(u,v)=\left(\pp{H}{\beta}\right)_0\beta_0'+\Landau_B(v).
\end{align}
We have
\begin{align}
  \label{eq:203}
  \ppp{h_n}{u}&=\ppp{H}{\alpha}(\alpha_n,\beta_n)\left(\pp{\alpha_n}{u}\right)^2+\pp{H}{\alpha}(\alpha_n,\beta_n)\ppp{\alpha_n}{u},\\
  \label{eq:204}
  \pppp{h_n}{u}{v}&=\pppp{H}{\alpha}{\beta}(\alpha_n,\beta_n)\pp{\alpha_n}{u}\pp{\beta_n}{v}.
\end{align}
Therefore, using \eqref{eq:180}, we have
\begin{align}
  \label{eq:205}
  \ppp{h_n}{u}(u,v)=\Landau_A(1),\qquad\pppp{h_n}{u}{v}(u,v)=\Landau(1).
\end{align}
Analogously, we obtain
\begin{align}
  \label{eq:206}
  \ppp{g_n}{u}(u,v)=\Landau_A(1),\qquad\pppp{g_n}{u}{v}(u,v)=\Landau(1).
\end{align}
We now derive properties of $t_n(u,v)$ using \eqref{eq:194}. Using \eqref{eq:198}, \eqref{eq:199} together with \eqref{eq:190}, \eqref{eq:191} we obtain
\begin{align}
  \label{eq:207}
  \phi_n(u,v)=-\frac{1}{\eta_0\Gamma_0}+\Landau_{N_0}(v),\qquad\psi_n(u,v)=\frac{1}{\eta_0}+\Landau_{N_0}(v).
\end{align}
Using these in \eqref{eq:194} and its derivative with respect to $v$ we obtain
\begin{align}
  \label{eq:208}
  t_n(u,v)&=\frac{1}{\eta_0}\left(v-\frac{u}{\Gamma_0}\right)+\Landau_{N_0}(v^2)=\Landau(v),\\
  \label{eq:209}
  \pp{t_n}{v}(u,v)&=\psi_n(u,v)=\frac{1}{\eta_0}+\Landau_{N_0}(v),
\end{align}
respectively. From the derivative of \eqref{eq:194} with respect to $u$ we have
\begin{align}
  \label{eq:210}
  \pp{t_n}{u}(u,v)=\phi_n(u,u)+\int_u^v\pp{\psi_n}{u}(u,v')dv'.
\end{align}
We have
\begin{align}
  \label{eq:211}
  \pp{\psi_n}{u}=\pp{h_n}{u}\pp{x_n}{v}+h_n\pppp{x_n}{u}{v}.
\end{align}
Using \eqref{eq:199} and the first of \eqref{eq:202} together with \eqref{eq:187}, \eqref{eq:191} we obtain
\begin{align}
  \label{eq:212}
  \pp{\psi_n}{u}(u,v)=\left(\pp{H}{\alpha}\right)_0\alpha_0'+\Landau_A(v)+\Landau_{N_0}(1)=\Landau_{N_0}(1).
\end{align}
Using this together with the first of \eqref{eq:207} in \eqref{eq:210} we obtain
\begin{align}
  \label{eq:213}
  \pp{t_n}{u}(u,v)=-\frac{1}{\eta_0\Gamma_0}+\Landau_{N_0}(v).
\end{align}
We turn to estimates for the second partial derivatives of $t(u,v)$. Since (see \eqref{eq:210})
\begin{align}
  \label{eq:214}
  \pppp{t_n}{u}{v}(u,v)=\pp{\psi_n}{u}(u,v),
\end{align}
we have from \eqref{eq:212}
\begin{align}
  \label{eq:215}
  \pppp{t_n}{u}{v}(u,v)=\Landau_{N_0}(1).
\end{align}
Analogous to the treatment of $\pp{\psi_n}{u}(u,v)$, we obtain
\begin{align}
  \label{eq:216}
  \pp{\psi_n}{v}(u,v)=\Landau_{N_0}(1),\qquad  \pp{\phi_n}{u}(u,v)=\Landau_{N_0}(1),\qquad\pp{\phi_n}{v}(u,v)=\Landau_{N_0}(1),
\end{align}
the first of which implies
\begin{align}
  \label{eq:217}
  \ppp{t_n}{v}(u,v)=\Landau_{N_0}(1).
\end{align}
From \eqref{eq:210} we have
\begin{align}
  \label{eq:218}
  \ppp{t_n}{u}(u,v)=\pp{\phi_n}{u}(u,u)+\pp{\phi_n}{v}(u,u)+\pp{}{u}\left(\int_u^v\pp{\psi_n}{u}(u,v')dv'\right).
\end{align}
For the last term in \eqref{eq:218} we use
\begin{align}
  \label{eq:219}
  \int_u^v\pp{\psi_n}{u}(u,v')dv'&=\int_u^v\left(\pp{h_n}{u}\pp{x_n}{v}+h_n\pppp{x_n}{u}{v}\right)(u,v')dv'\nonumber\\
                               &=\int_u^v\left(\pp{h_n}{u}\pp{x_n}{v}\right)(u,v')dv'\nonumber\\
  &\quad+\left(h_n\pp{x_n}{u}\right)(u,v)-\left(h_n\pp{x_n}{u}\right)(u,u)-\int_u^v\left(\pp{h_n}{v}\pp{x_n}{u}\right)(u,v')dv',
\end{align}
where we integrated by parts for the second term in the bracket in the first line. Using this, the last term in \eqref{eq:218} becomes
\begin{align}
  \label{eq:220}
  \pp{}{u}\left(\int_u^v\pp{\psi_n}{u}(u,v')dv'\right)&=\int_u^v\left(\ppp{h_n}{u}\pp{x_n}{v}+\pp{h_n}{u}\pppp{x_n}{u}{v}\right)(u,v')dv'-\left(\pp{h_n}{u}\pp{x_n}{v}\right)(u,u)\nonumber\\
                                                    &\qquad+\left(\pp{h_n}{u}\pp{x_n}{u}\right)(u,v)+\left(h_n\ppp{x_n}{u}\right)(u,v)\nonumber\\
                                                    &\qquad-\left(\pp{h_n}{u}\pp{x_n}{u}\right)(u,u)-\left(h_n\ppp{x_n}{u}\right)(u,u)\nonumber\\
                                                    &\qquad-\left(\pp{h_n}{v}\pp{x_n}{u}\right)(u,u)-\left(h_n\pppp{x_n}{u}{v}\right)(u,u)\nonumber\\
                                                      &\qquad -\int_u^v\left(\pppp{h_n}{u}{v}\pp{x_n}{u}+\pp{h_n}{v}\ppp{x_n}{u}\right)(u,v')dv'\nonumber\\
&\hspace{50mm}  +\left(\pp{h_n}{v}\pp{x_n}{u}\right)(u,u).
\end{align}
(We don't carry out the cancellation in the last and the third to last lines in favor of readability). Using \eqref{eq:187}, \eqref{eq:190}, \eqref{eq:191} together with \eqref{eq:202}, \eqref{eq:205} we obtain
\begin{align}
  \label{eq:221}
  \pp{}{u}\left(\int_u^v\pp{\psi_n}{u}(u,v')dv'\right)=\Landau_{N_0}(1).
\end{align}
Using this in turn in \eqref{eq:218}, together with the expressions for the partial derivatives of $\phi_n$ as given in \eqref{eq:216}, we obtain
\begin{align}
  \label{eq:222}
  \ppp{t_n}{u}(u,v)=\Landau_{N_0}(1).
\end{align}
We summarize the properties of $t_n(u,v)$:
\begin{align}
  \label{eq:223}
  t_n(u,v)&=\frac{1}{\eta_0}\left(v-\frac{u}{\Gamma_0}\right)+\Landau_{N_0}(v^2)=\Landau(v),\\  
  \label{eq:224}
  \pp{t_n}{u}(u,v)&=-\frac{1}{\eta_0\Gamma_0}+\Landau_{N_0}(v),\qquad   \pp{t_n}{v}(u,v)=\frac{1}{\eta_0}+\Landau_{N_0}(v),\\
  \label{eq:225}
  \ppp{t_n}{u}(u,v)&=\Landau_{N_0}(1),\qquad  \pppp{t_n}{u}{v}(u,v)=\Landau_{N_0}(1),\qquad  \ppp{t_n}{v}(u,v)=\Landau_{N_0}(1). 
\end{align}

We now derive properties of $\is{t}_+$, $\is{x}_+$, $i=1,2$. We recall (see \eqref{eq:131}, \eqref{eq:132})
\begin{alignat}{2}
  \label{eq:226}
  \ls{t}_{+n}(u)&=t_n(u,u),&\qquad \ls{x}_{+n}(u)&=x_n(u,u),\\
  \rs{t}_{+n}(v)&=t_n(av,v),&\qquad \rs{x}_{+n}(v)&=x_n(av,v).\label{eq:227}
\end{alignat}
Using \eqref{eq:187}, \eqref{eq:190}, \eqref{eq:191}, \eqref{eq:193} we obtain the following properties of $\ls{x}_{+n}$:
\begin{align}
  \label{eq:228}
  \ls{x}_{+n}(u)&=x_n(u,u)=\left(\frac{1}{\Gamma_0}+1\right)u+\Landau_{N_0}(u^2),\\
  \label{eq:229}
  \frac{d\ls{x}_{+n}}{du}(u)&=\pp{x_n}{u}(u,u)+\pp{x_n}{v}(u,u)=\frac{1}{\Gamma_0}+1+\Landau_{N_0}(u),\\
  \label{eq:230}
  \frac{d^2\ls{x}_{+n}}{du^2}(u)&=\ppp{x_n}{u}(u,u)+2\pppp{x_n}{u}{v}(u,u)+\ppp{x_n}{v}(u,u)=\Landau_{N_0}(1)
\end{align}
and analogously, the following properties of $\rs{x}_{+n}$:
\begin{align}
  \label{eq:231}
  \rs{x}_{+n}(v)&=x_n(av,v)=\left(\frac{a}{\Gamma_0}+1\right)v+\Landau_{N_0}(v^2),\\
  \label{eq:232}
  \frac{d\rs{x}_{+n}}{dv}(v)&=a\pp{x_n}{u}(av,v)+\pp{x_n}{v}(av,v)=\frac{a}{\Gamma_0}+1+\Landau_{N_0}(v),\\
  \label{eq:233}
  \frac{d^2\rs{x}_{+n}}{dv^2}(v)&=a^2\ppp{x_n}{u}(av,v)+2a\pppp{x_n}{u}{v}(av,v)+\ppp{x_n}{v}(av,v)=\Landau_{N_0}(1).
\end{align}
Similarly, but using \eqref{eq:223}, \eqref{eq:224}, \eqref{eq:225} we obtain the following properties of $\ls{t}_{+n}$:
\begin{align}
  \label{eq:234}
  \ls{t}_{+n}(u)&=t_n(u,u)=\frac{1}{\eta_0}\left(1-\frac{1}{\Gamma_0}\right)u+\Landau_{N_0}(u^2),\\
  \label{eq:235}
  \frac{d\ls{t}_{+n}}{du}(u)&=\pp{t_n}{u}(u,u)+\pp{t_n}{v}(u,u)=\frac{1}{\eta_0}\left(1-\frac{1}{\Gamma_0}\right)+\Landau_{N_0}(u).\\
  \label{eq:236}
  \frac{d^2\ls{t}_{+n}}{du^2}(u)&=\ppp{t_n}{u}(u,u)+2\pppp{t_n}{u}{v}(u,u)+\ppp{t_n}{v}(u,u)=\Landau_{N_0}(1)
\end{align}
and analogously, the following properties of $\rs{t}_{+n}$:
\begin{align}
  \label{eq:237}
  \rs{t}_{+n}(v)&=t_n(av,v)=\frac{1}{\eta_0}\left(1-\frac{a}{\Gamma_0}\right)u+\Landau_{N_0}(v^2),\\
  \label{eq:238}
  \frac{d\rs{t}_{+n}}{dv}(v)&=a\pp{t_n}{u}(av,v)+\pp{t_n}{v}(av,v)=\frac{1}{\eta_0}\left(1-\frac{a}{\Gamma_0}\right)+\Landau_{N_0}(v),\\
  \label{eq:239}
  \frac{d^2\rs{t}_{+n}}{dv^2}(v)&=a^2\ppp{t_n}{u}(av,v)+2a\pppp{t_n}{u}{v}(av,v)+\ppp{t_n}{v}(av,v)=\Landau_{N_0}(1).
\end{align}

Let $t\mapsto \ls{x}_{0}^\ast(t)$ describe the left moving characteristic in the state ahead of the left moving shock, starting at the origin (this coincides with the right boundary  $\ls{\mathcal{B}}$ of the future development of the data set given for $x\leq 0$, see figure \ref{future_development} on page \pageref{future_development}). We use the notation
\begin{align}
  \label{eq:240}
  \ls{x}^\ast_{0,n}(u)=\ls{x}^\ast_0(\ls{t}_{+n}(u)).
\end{align}
Here the right hand side is well defined for $\varepsilon$ sufficiently small. We have
\begin{align}
  \label{eq:241}
  \frac{d\ls{x}_{0,n}^\ast}{du}(u)=\frac{d\ls{x}_{0}^\ast}{dt}\Big(\ls{t}_{+n}(u)\Big)\frac{d\ls{t}_{+n}}{du}(u).
\end{align}
Let us denote by $\ls{\cin}^\ast_0(t)$ the characteristic speed of the left moving characteristic starting at the origin, i.e.
\begin{align}
  \label{eq:242}
  \frac{d\ls{x}_0^\ast}{dt}(t)=\ls{\cin}^\ast_0(t).
\end{align}
$\ls{\cin}_0^\ast(t)$ being a smooth function of $t$, we have
\begin{align}
  \label{eq:243}
  \ls{\cin}_0^\ast(t)=(\ls{\cin}_0^\ast)_0+\Landau(t),
\end{align}
therefore (see \eqref{eq:234}),
\begin{align}
  \label{eq:244}
  \ls{\cin}_0^\ast(\ls{t}_{+n}(u))=(\ls{\cin}_0^\ast)_0+\Landau(u).
\end{align}
Using this together with \eqref{eq:235} in \eqref{eq:241} we obtain
\begin{align}
  \label{eq:245}
  \frac{d\ls{x}_{0,n}^\ast}{du}(u)=(\ls{\cin}_0^\ast)_0\frac{1}{\eta_0}\left(1-\frac{1}{\Gamma_0}\right)+\Landau_{N_0}(u).
\end{align}
Using this and \eqref{eq:229} we have (see also \eqref{eq:44})
\begin{align}
  \label{eq:246}
  \frac{d\ls{x}_{+n}}{du}(u)-\frac{d\ls{x}_{0,n}^\ast}{du}(u)=\frac{2}{\ls{V}_0+\eta_0}\Big(\ls{V}_0-(\ls{\cin}_0^\ast)_0\Big)+\Landau_{N_0}(u).
\end{align}
Hence
\begin{align}
  \label{eq:247}
  \ls{x}_{+n}(u)-\ls{x}_{0,n}^\ast(u)=\frac{2}{\ls{V}_0+\eta_0}\Big(\ls{V}_0-(\ls{\cin}_0^\ast)_0\Big)u+\Landau_{N_0}(u^2).
\end{align}
By the determinism condition \eqref{eq:21}, the factor in front of $u$ in the first term on the right is strictly negative. By choosing $\varepsilon$ sufficiently small, the remainder in \eqref{eq:247} in absolute value can be made less or equal to $\left|\frac{2}{\ls{V}_0+\eta_0}\Big(\ls{V}_0-(\ls{\cin}_0^\ast)_0\Big)\right|u$. Hence we obtain
\begin{align}
  \label{eq:248}
  \ls{x}_{+n}(u)-\ls{x}_{0,n}^\ast(u)\leq 0
\end{align}
for $u\in[0,a\varepsilon]$. I.e.~the curve $u\mapsto (\ls{t}_{+n}(u),\ls{x}_{+n}(u))$ lies in the domain of the future development of the data to the left of the origin (see figure \ref{interaction_problem} on page \pageref{interaction_problem}).

Let $t\mapsto\rs{x}_0^\ast(t)$ describe the right moving characteristic in the state ahead of the right moving shock, starting at the origin (this coincides with the left boundary $\rs{\mathcal{B}}$ of the future development of the data given for $x\geq 0$, see figure \ref{future_development} on page \pageref{future_development}). We use the notation
\begin{align}
  \label{eq:249}
  \rs{x}_{0,n}^\ast(v)=\rs{x}^\ast_0(\rs{t}_{+n}(v)).
\end{align}
We have
\begin{align}
  \label{eq:250}
  \frac{d\rs{x}^\ast_{0,n}}{dv}(v)=\frac{d\rs{x}_0^\ast}{dt}\Big(\rs{t}_{+n}(v)\Big)\frac{d\rs{t}_{+n}}{dv}(v).
\end{align}
Let us denote by $\rs{\cin}_0^\ast(t)$ the characteristic speed of the right moving characteristic starting at the origin, i.e.
\begin{align}
  \label{eq:251}
  \frac{d\rs{x}_0^\ast}{dt}(t)=\rs{\cout}^\ast_0(t).
\end{align}
$\rs{\cout}^\ast_0(t)$ being a smooth function of $t$, we have
\begin{align}
  \label{eq:252}
  \rs{\cout}^\ast_0(t)=(\rs{\cout}^\ast_0)_0+\Landau(t),
\end{align}
therefore (see \eqref{eq:237}),
\begin{align}
  \label{eq:253}
  \rs{\cout}^\ast_0(\rs{t}_{+n}(v))=(\rs{\cout}^\ast_0)_0+\Landau(v).
\end{align}
Using this together with \eqref{eq:238} in \eqref{eq:250} we obtain
\begin{align}
  \label{eq:254}
  \frac{d\rs{x}^\ast_{0,n}}{dv}(v)=(\rs{\cout}^\ast_0)_0\frac{1}{\eta_0}\left(1-\frac{a}{\Gamma_0}\right)+\Landau_{N_0}(v).
\end{align}
Using this and \eqref{eq:232} we have (see also \eqref{eq:36} and \eqref{eq:44})
\begin{align}
  \label{eq:255}
  \frac{d\rs{x}_{+n}}{dv}(v)-\frac{d\rs{x}^\ast_{0,n}}{dv}(v)=\frac{2}{\rs{V}_0+\eta_0}\left(\rs{V}_0-(\rs{\cout}^\ast_0)_0\right)+\Landau_{N_0}(v).
\end{align}
Hence
\begin{align}
  \label{eq:256}
  \rs{x}_{+n}(v)-\rs{x}^\ast_{0,n}(v)=\frac{2}{\rs{V}_0+\eta_0}\left(\rs{V}_0-(\rs{\cout}^\ast_0)_0\right)v+\Landau_{N_0}(v^2).
\end{align}
By the determinism condition \eqref{eq:23} and \eqref{eq:25}, the factor in front of $v$ in the first term on the right is strictly positive. By choosing $\varepsilon$ sufficiently small, the remainder in \eqref{eq:256} in absolute value can be made less or equal to $\frac{2}{\rs{V}_0+\eta_0}\left(\rs{V}_0-(\rs{\cout}^\ast_0)_0\right)v$. Hence we obtain
\begin{align}
  \label{eq:257}
  \rs{x}_{+n}(v)-\rs{x}^\ast_{0,n}(v)\geq 0
\end{align}
for $v\in[0,\varepsilon]$. I.e. the curve $v\mapsto(\rs{t}_{+n}(v),\rs{x}_{+n}(v))$ lies in the domain of the future development of the data to the right of the origin (see figure \ref{interaction_problem} on page \pageref{interaction_problem}).

We derive properties of $\is{\alpha}_{-n}$, $\is{\beta}_{-n}$, $i=1,2$. We have
\begin{align}
  \label{eq:258}
  \ls{\alpha}_{-n}(u)=\ls{\alpha}^\ast(\ls{t}_{+n}(u),\ls{x}_{+n}(u)),
\end{align}
where $\ls{\alpha}^\ast(t,x)$ is a smooth function of its arguments, given by the solution in the future development in the state ahead of the left moving shock. The second derivative of $\ls{\alpha}_{-n}(u)$ is given by
\begin{align}
  \label{eq:259}
    \frac{d^2\ls{\alpha}_{-n}}{du^2}(u)&=\pp{\ls{\alpha}^\ast}{t}(\ls{t}_{+n}(u),\ls{x}_{+n}(u))\frac{d^2\ls{t}_{+n}}{du^2}(u)+\pp{\ls{\alpha}^\ast}{x}(\ls{t}_{+n}(u),\ls{x}_{+n}(u))\frac{d^2\ls{x}_{+n}}{du^2}(u)\nonumber\\
                                       &\qquad+\ppp{\ls{\alpha}^\ast}{t}(\ls{t}_{+n}(u),\ls{x}_{+n}(u))\left(\frac{d\ls{t}_{+n}}{du}(u)\right)^2\nonumber\\
  &\qquad+2\pppp{\ls{\alpha}^\ast}{t}{x}(\ls{t}_{+n}(u),\ls{x}_{+n}(u))\frac{d\ls{t}_{+n}}{du}(u)\frac{d\ls{x}_{+n}}{du}(u)\nonumber\\
  &\qquad\qquad\qquad+\ppp{\alpha^\ast}{x}(\ls{t}_{+n}(u),\ls{x}_{+n}(u))\left(\frac{d\ls{x}_{+n}}{du}(u)\right)^2.
\end{align}
Making use of \eqref{eq:228},\ldots,\eqref{eq:230} and \eqref{eq:234},\ldots,\eqref{eq:236} we deduce
\begin{align}
  \label{eq:260}
  \frac{d^2\ls{\alpha}_{-n}}{du^2}(u)=\Landau_{N_0}(1).
\end{align}
Together with (see \eqref{eq:229}, \eqref{eq:235})
\begin{align}
  \label{eq:261}
  \frac{d\ls{\alpha}_{-n}}{du}(0)&=\left(\pp{\ls{\alpha}^\ast}{t}\right)_0\frac{d\ls{t}_{+n}}{du}(0)+\left(\pp{\ls{\alpha}^\ast}{x}\right)_0\frac{d\ls{x}_{+n}}{du}(0)\nonumber\\
                                 &=\left(\pp{\ls{\alpha}^\ast}{t}\right)_0\frac{1}{\eta_0}\left(1-\frac{1}{\Gamma_0}\right)+\left(\pp{\ls{\alpha}^\ast}{x}\right)_0\left(\frac{1}{\Gamma_0}+1\right)
\end{align}
we obtain
\begin{align}
  \label{eq:262}
  \frac{d\ls{\alpha}_{-n}}{du}(u)=\left(\pp{\ls{\alpha}^\ast}{t}\right)_0\frac{1}{\eta_0}\left(1-\frac{1}{\Gamma_0}\right)+\left(\pp{\ls{\alpha}^\ast}{x}\right)_0\left(\frac{1}{\Gamma_0}+1\right)+\Landau_{N_0}(u).
\end{align}
With $\ls{\alpha}_{-n}^\ast(0)=\ls{\alpha}_0^\ast$, this implies
\begin{align}
  \label{eq:263}
  \ls{\alpha}_{-n}(u)=\ls{\alpha}_0^\ast+\left(\left(\pp{\ls{\alpha}^\ast}{t}\right)_0\frac{1}{\eta_0}\left(1-\frac{1}{\Gamma_0}\right)+\left(\pp{\ls{\alpha}^\ast}{x}\right)_0\left(\frac{1}{\Gamma_0}+1\right)\right)u+\Landau_{N_0}(u^2).
\end{align}
We note that analogous estimates hold for $\ls{\beta}_{-n}(u)$, $\frac{d\ls{\beta}_{-n}}{du}(u)$, $\frac{d^2\ls{\beta}_{-n}}{du^2}(u)$.

We have
\begin{align}
  \label{eq:264}
    \rs{\alpha}_{-n}(v)=\rs{\alpha}^\ast(\rs{t}_{+n}(v),\rs{x}_{+n}(v)),
\end{align}
where $\rs{\alpha}^\ast(t,x)$ is a smooth function of its arguments, given by the solution in the future development in the state ahead of the right moving shock. Analogously to the derivation of \eqref{eq:260}, \eqref{eq:262}, \eqref{eq:263} but now using \eqref{eq:231},\ldots,\eqref{eq:233} and \eqref{eq:237},\ldots,\eqref{eq:239} we obtain
\begin{align}
  \label{eq:265}
  \frac{d^2\rs{\alpha}_{-n}}{dv^2}(v)&=\Landau_{N_0}(1),\\
  \frac{d\rs{\alpha}_{-n}}{dv}(v)&=\left(\pp{\rs{\alpha}^\ast}{t}\right)_0\frac{1}{\eta_0}\left(1-\frac{a}{\Gamma_0}\right)+\left(\pp{\rs{\alpha}^\ast}{x}\right)_0\left(\frac{a}{\Gamma_0}+1\right)+\Landau_{N_0}(v),  \label{eq:266}\\
  \rs{\alpha}_{-n}(v)&=\rs{\alpha}_0^\ast+\left(\left(\pp{\rs{\alpha}^\ast}{t}\right)_0\frac{1}{\eta_0}\left(1-\frac{a}{\Gamma_0}\right)+\left(\pp{\rs{\alpha}^\ast}{x}\right)_0\left(\frac{a}{\Gamma_0}+1\right)\right)v+\Landau_{N_0}(v^2).      \label{eq:267}
\end{align}
We note that analogous estimates hold for $\rs{\beta}_{-n}(v)$, $\frac{d\rs{\beta}_{-n}}{dv}(v)$, $\frac{d^2\rs{\beta}_{-n}}{dv^2}(v)$.

We derive properties of $\is{V}_n$, $i=1,2$. We have
\begin{align}
  \label{eq:268}
  \is{V}_n=\frac{\jump{\is{\rho}_n\is{w}_n}}{\jump{\is{\rho}_n}},\qquad i=1,2.
\end{align}
Here
\begin{align}
  \label{eq:269}
  \jump{\is{\rho}_n}&=\is{\rho}_{+n}-\is{\rho}_{-n}\nonumber\\
                &=\rho(\is{\alpha}_{+n},\is{\beta}_{+n})-\rho(\is{\alpha}_{-n},\is{\beta}_{-n}),\qquad i=1,2
\end{align}
where $\rho(\alpha,\beta)$ is a given smooth function of its arguments. Similarly for $\jump{\is{\rho}_n\is{w}_n}$, $i=1,2$. Using the properties of $\ls{\alpha}_{+n}(u)$ as given by \eqref{eq:167}, \eqref{eq:170}, the properties of $\ls{\beta}_{+n}(u)$, as given by \eqref{eq:185}, the properties of $\ls{\alpha}_{-n}(u)$ (and $\ls{\beta}_{-n}(u)$) as given by \eqref{eq:262}, \eqref{eq:263} (and analogous estimates with $\ls{\beta}_{-n}$ in the role of $\ls{\alpha}_{-n}$), the properties of $\rs{\alpha}_{+n}(v)$ as given by \eqref{eq:183}, the properties of $\rs{\beta}_{+n}(v)$ as given by \eqref{eq:171}, \eqref{eq:173}, the properties of $\rs{\alpha}_{-n}(v)$ (and $\rs{\beta}_{-n}(v)$) as given by \eqref{eq:266}, \eqref{eq:267} (and analogous estimates with $\rs{\beta}_{-n}$ in the role of $\rs{\alpha}_{-n}$), we obtain
\begin{alignat}{3}
  \label{eq:270}
  \ls{V}_n(u)&=\ls{V}_0+\Landau(u),\qquad &\bigg|\frac{d\ls{V}_n}{du}(u)\bigg|&\leq C,\\
  \rs{V}_n(v)&=\rs{V}_0+\Landau(v),\qquad &\bigg|\frac{d\rs{V}_n}{dv}(v)\bigg|&\leq C.\label{eq:271}
\end{alignat}

We derive properties of $\cin_n(u,v)$, $\cout_n(u,v)$. We recall the notation (see \eqref{eq:149})
\begin{align}
  \label{eq:272}
  \cin_n(u,v)=\cin(\alpha_n(u,v),\beta_n(u,v)),\qquad  \cout_n(u,v)=\cout(\alpha_n(u,v),\beta_n(u,v)).
\end{align}
$\cin(\alpha,\beta)$, $\cout(\alpha,\beta)$ being smooth functions, the argument is analogous to the argument used to derive properties of $g_n(u,v)$, $h_n(u,v)$ (see \eqref{eq:196}, \eqref{eq:197}). Analogous to \eqref{eq:198}, \eqref{eq:199} we have
\begin{align}
  \label{eq:273}
  \cin_n(u,v)&=(\cin)_0+\left(\pp{\cin}{\alpha}\right)_0\alpha_0'u+\left(\pp{\cin}{\beta}\right)_0\beta_0'v+\Landau_A(v^2)+\Landau_B(v^2)+\Landau(uv)\nonumber\\
             &=-\eta_0+\Landau(v),\\
  \label{eq:274}
  \cout_n(u,v)&=(\cout)_0+\left(\pp{\cout}{\alpha}\right)_0\beta_0'u+\left(\pp{\cout}{\beta}\right)_0\beta_0'v+\Landau_A(u^2)+\Landau_B(v^2)+\Landau(uv)\nonumber\\
             &=\eta_0+\Landau(v).
\end{align}
Analogous to \eqref{eq:201} we have
\begin{align}
  \label{eq:275}
  \pp{\cin_n}{u}(u,v)=\left(\pp{\cin}{\alpha}\right)_0\alpha_0'+\Landau_A(v),\qquad\pp{\cin_n}{v}(u,v)=\left(\pp{\cin}{\beta}\right)_0\beta_0'+\Landau_B(v).
\end{align}
Analogous to \eqref{eq:205} we have
\begin{align}
  \label{eq:276}
\ppp{\cin_n}{u}(u,v)=\Landau_A(1),\qquad\ppp{\cin_n}{v}(u,v)=\Landau_B(1),\qquad\pppp{\cin_n}{u}{v}(u,v)=\Landau(1).
\end{align}
We note that analogous estimates hold for $\cout_n(u,v)$ and derivatives thereof.

We derive properties of
\begin{align}
  \label{eq:277}
  \ls{\cin}_{+n}(u)&=\cin(\ls{\alpha}_{+n}(u),\ls{\beta}_{+n}(u)),\\
  \rs{\cin}_{+n}(v)&=\cin(\rs{\alpha}_{+n}(v),\rs{\beta}_{+n}(v)).  \label{eq:278}
\end{align}
Using the properties of $\ls{\alpha}_{+n}(u)$, $\rs{\beta}_{+n}(v)$ as given by \eqref{eq:167}, \eqref{eq:170},\ldots,\eqref{eq:173} and the properties of $\rs{\alpha}_{+n}(u)$, $\ls{\beta}_{+n}(v)$ as given by \eqref{eq:183}, \eqref{eq:185}, we obtain
\begin{alignat}{3}
  \label{eq:279}
  \ls{\cin}_{+n}(u)&=-\eta_0+\Landau(u),&\qquad \frac{d\ls{\cin}_{+n}}{du}(u)&=\Landau(1),\\
  \rs{\cin}_{+n}(v)&=-\eta_0+\Landau(v),& \frac{d\rs{\cin}_{+n}}{dv}(v)&=\Landau(1).  \label{eq:280}
\end{alignat}
Analogously we obtain
\begin{alignat}{3}
  \label{eq:281}
  \ls{\cout}_{+n}(u)&=\eta_0+\Landau(u),&\qquad \frac{\ls{d\cout}_{+n}}{du}(u)&=\Landau(1),\\
  \rs{\cout}_{+n}(v)&=\eta_0+\Landau(v),& \frac{d\rs{\cout}_{+n}}{dv}(v)&=\Landau(1).  \label{eq:282}
\end{alignat}
Using these together with \eqref{eq:270}, \eqref{eq:271} in (see \eqref{eq:139})
\begin{align}
  \label{eq:283}
    \ls{\Gamma}_n=\frac{\ls{\cout}_{+n}}{\ls{\cin}_{+n}}\frac{\ls{V}_n-\ls{\cin}_{+n}}{\ls{\cout}_{+n}-\ls{V}_n},\qquad\rs{\Gamma}_n=a\frac{\rs{\cout}_{+n}}{\rs{\cin}_{+n}}\frac{\rs{V}_n-\rs{\cin}_{+n}}{\rs{\cout}_{+n}-\rs{V}_n},
\end{align}
we obtain
\begin{alignat}{5}
  \label{eq:284}
  \ls{\Gamma}_n(u)&=\ls{\Gamma}_0+\Landau(u)=-\frac{\eta_0+\ls{V}_0}{\eta_0-\ls{V}_0}+\Landau(u),&\qquad\frac{d\ls{\Gamma}_n}{du}(u)&=\Landau(1)\\
  \rs{\Gamma}_n(v)&=\rs{\Gamma}_0+\Landau(v)=-a\frac{\rs{V}_0+\eta_0}{\eta_0-\rs{V}_0}+\Landau(v),&\frac{d\rs{\Gamma}_n}{dv}(v)&=\Landau(1).  \label{eq:285}
\end{alignat}

We derive properties of $M_n(u,v)$. We have (see \eqref{eq:144})
\begin{align}
  \label{eq:286}
    M_n(u,v)=\mu_n(u,v)\pp{x_n}{u}(u,v)+\nu_n(u,v)\pp{x_n}{v}(u,v).
\end{align}
Here (see \eqref{eq:149})
\begin{align}
  \label{eq:287}
    \mu_n=\frac{1}{\cout_n-\cin_n}\frac{\cout_n}{\cin_n}\pp{\cin_n}{v},\qquad \nu_n=-\frac{1}{\cout_n-\cin_n}\frac{\cin_n}{\cout_n}\pp{\cout_n}{u}.
\end{align}
Using the properties of $\cin_n$, as given by \eqref{eq:273}, \eqref{eq:275}, \eqref{eq:276} (and analogous equations for $\cout_n$), we obtain
\begin{align}
  \label{eq:288}
  \mu_n(u,v)=-\frac{1}{2\eta_0}\left(\pp{\cin}{\beta}\right)_0\beta_0'+\Landau_B(v),\qquad\nu_n(u,v)=\frac{1}{2\eta_0}\left(\pp{\cout}{\alpha}\right)_0\alpha_0'+\Landau_A(v),\\
    \pp{\mu_n}{u}(u,v)=\Landau(1),\quad\pp{\mu_n}{v}(u,v)=\Landau_B(1),\quad\pp{\nu_n}{u}(u,v)=\Landau_A(1),\quad\pp{\nu_n}{v}(u,v)=\Landau(1).  \label{eq:289}
\end{align}
Together with \eqref{eq:187}, \eqref{eq:190}, \eqref{eq:191} we obtain
\begin{align}
  \label{eq:290}
  M_n(u,v)&=\frac{1}{2\eta_0}\Bigg(\left(\pp{\cout}{\alpha}\right)_0\alpha_0'-\left(\pp{\cin}{\beta}\right)_0\frac{\beta_0'}{\Gamma_0}\Bigg)+\Landau_B(v)+\Landau_A(v)+\Landau_{N_0}(v),\\
    \pp{M_n}{u}(u,v)&=\Landau_A(1)+\Landau_{N_0}(1),\qquad\pp{M_n}{v}(u,v)= \Landau_B(1)+\Landau_{N_0}(1).  \label{eq:291}
\end{align}

We turn to the properties of $x_{n+1}(u,v)$. We have (see \eqref{eq:141})
\begin{align}
  \label{eq:292}
  x_{n+1}(u,v)&=\frac{1}{\Gamma_0}u+v+\int_0^u\ls{\Phi}_n(u')du'+\int_0^v\rs{\Phi}_n(v')dv'\nonumber\\
              &\quad+\int_0^u\int_{av'}^{v'}M_n(u',v')du'dv'+\int_u^v\int_{av'}^uM_n(u',v')du'dv'.
\end{align}
where
\begin{align}
  \label{eq:293}
  \ls{\Phi}_n(u)&=\int_0^{u}\ls{\Lambda}_n(u')du'+\frac{1}{\ls{\Gamma}_n(u)}\int_{au}^{u}M_n(u',u)du',\\
  \rs{\Phi}_n(v)&=\int_0^{v}\rs{\Lambda}_n(v')dv'+\rs{\Gamma}_n(v)\int_{av}^{v}M_n(av,v')dv',  \label{eq:294}
\end{align}
where $M_n(u,v)$ is given by \eqref{eq:286} and
\begin{align}
  \ls{\Lambda}_n(u)&=\frac{d\ls{\gamma}_n}{du}(u)\pp{x_n}{u}(au,u)+\ls{\gamma}_n(u)a\ppp{x_n}{u}(au,u)+\ls{\gamma}_n(u)M_n(au,u),\label{eq:295}\\
  \label{eq:296}
  \rs{\Lambda}_n(v)&=\frac{d\rs{\gamma}_n}{dv}(v)\pp{x_n}{v}(av,av)+\rs{\gamma}_n(v)a\ppp{x_n}{v}(av,av)+\rs{\gamma}_n(v)aM_n(av,av),
\end{align}
where
\begin{align}
  \label{eq:297}
  \ls{\gamma}_n(u)=\frac{\rs{\Gamma}_n(u)}{\ls{\Gamma}_n(u)},\qquad\rs{\gamma}_n(v)=\frac{\rs{\Gamma}_n(v)}{\ls{\Gamma}_n(av)}.
\end{align}
The first order derivatives of $x_{n+1}(u,v)$ are
\begin{align}
  \label{eq:298}
  \pp{x_{n+1}}{u}(u,v)&=\frac{1}{\Gamma_0}+\ls{\Phi}_n(u)+\int_u^vM_n(u,v')dv'\nonumber\\
                      &=\frac{1}{\Gamma_0}+\int_0^{u}\ls{\Lambda}_n(u')du'+\frac{1}{\ls{\Gamma}_n(u)}\int_{au}^{u}M_n(u',u)du'+\int_u^vM_n(u,v')dv',\\
  \pp{x_{n+1}}{v}(u,v)&=1+\rs{\Phi}_n(v)+\int_{av}^uM_n(u',v)du'\nonumber\\
                      &=1+\int_0^{v}\rs{\Lambda}_n(v')dv'+\rs{\Gamma}_n(v)\int_{av}^{v}M_n(av,v')dv'+\int_{av}^uM_n(u',v)du'.\label{eq:299}
\end{align}
From \eqref{eq:298}, \eqref{eq:299} we see that
\begin{align}
  \label{eq:300}
  \pp{x_{n+1}}{u}(0,0)=\frac{1}{\Gamma_0},\qquad\pp{x_{n+1}}{v}(0,0)=1.
\end{align}
Taking another derivative of \eqref{eq:298}, \eqref{eq:299}, we obtain
\begin{align}
  \label{eq:301}
  \pppp{x_{n+1}}{u}{v}(u,v)&=M_n(u,v),\\
  \ppp{x_{n+1}}{u}(u,v)&=\ls{\Lambda}_n(u)-\frac{1}{\left(\ls{\Gamma}_n(u)\right)^2}\frac{d\ls{\Gamma}_n}{du}(u)\int_{au}^{u}M_n(u',u)du'\nonumber\\
                           &\quad +\frac{1}{\ls{\Gamma}_n(u)}\left(M_n(u,u)-aM_n(au,u)+\int_{au}^u\pp{M_n}{v}(u',u)du'\right)\nonumber\\
                           &\quad-M_n(u,u)+\int_u^v\pp{M_n}{u}(u,v')dv',  \label{eq:302}\\
  \ppp{x_{n+1}}{v}(u,v)&=\rs{\Lambda}_n(v)+\frac{d\rs{\Gamma}_n}{dv}(v)\int_{av}^{v}M_n(av,v')dv'\nonumber\\
                           &\quad +\rs{\Gamma}_n(v)\left(M_n(av,v)-aM_n(av,av)+\int_{av}^va\pp{M_n}{u}(av,v')dv'\right)\nonumber\\
  &\quad -aM_n(av,v)+\int_{av}^u\pp{M_n}{v}(u',v)du'.  \label{eq:303}
\end{align}
Using \eqref{eq:290} we have
\begin{align}
  \label{eq:304}
    \left|\pppp{x_{n+1}}{u}{v}(u,v)\right|\leq C.
\end{align}
Using \eqref{eq:284}, \eqref{eq:285}, \eqref{eq:290}, \eqref{eq:291} we obtain
\begin{align}
  \label{eq:305}
  \left|\ppp{x_{n+1}}{u}(u,v)\right|&\leq\left|\ls{\Lambda}_n(u)\right|+C,\\
  \left|\ppp{x_{n+1}}{v}(u,v)\right|&\leq\left|\rs{\Lambda}_n(v)\right|+C.  \label{eq:306}
\end{align}
In order to estimate $\is{\Lambda}_n$, $i=1,2$ we use \eqref{eq:284}, \eqref{eq:285} and the induction hypothesis \eqref{eq:187} to obtain
\begin{align}
  \left|\ls{\Lambda}_n(u)\right|&\leq C+\left|a\ls{\gamma}_n(u)\ppp{x_n}{u}(au,u)\right|,\label{eq:307}\\
  \left|\rs{\Lambda}_n(v)\right|&\leq C+\left|a\rs{\gamma}_n(v)\ppp{x_n}{v}(av,av)\right|,  \label{eq:308}
\end{align}
provided we choose $\varepsilon$ sufficiently small. In view of \eqref{eq:35} we have
\begin{align}
  \label{eq:309}
  \ls{\gamma}_n(u)=\frac{\rs{\Gamma}_0}{\ls{\Gamma}_0}+\Landau(u)=1+\Landau(u),\\
  \rs{\gamma}_n(v)=\frac{\rs{\Gamma}_0}{\ls{\Gamma}_0}+\Landau(v)=1+\Landau(v).\label{eq:310}
\end{align}
Therefore,
\begin{align}
  \left|\ls{\Lambda}_n(u)\right|&\leq C+(a+Cu)\left|\ppp{x_n}{u}(au,u)\right|\leq C+(a+Cu)N_0\leq C+aN_0,  \label{eq:311}\\
  \label{eq:312}
  \left|\rs{\Lambda}_n(v)\right|&\leq C+(a+Cv)\left|\ppp{x_n}{v}(av,av)\right|\leq C+(a+Cv)N_0\leq C+aN_0,
\end{align}
where we also used the induction hypothesis \eqref{eq:187}. Using these in \eqref{eq:305}, \eqref{eq:306} we obtain
\begin{align}
  \label{eq:313}
  \left|\ppp{x_{n+1}}{u}(u,v)\right|&\leq C+aN_0,\\
  \left|\ppp{x_{n+1}}{v}(u,v)\right|&\leq C+aN_0.  \label{eq:314}
\end{align}
Taking the supremum in $T_\varepsilon$ of \eqref{eq:304}, \eqref{eq:313}, \eqref{eq:314} we obtain
\begin{align}
  \label{eq:315}
  \sup_{T_\varepsilon}\left|\pppp{x_{n+1}}{u}{v}\right|\leq C,\qquad \sup_{T_\varepsilon}\left|\ppp{x_{n+1}}{u}\right|\leq C+aN_0,\qquad \sup_{T_\varepsilon}\left|\ppp{x_{n+1}}{v}\right|\leq C+aN_0.
\end{align}
Therefore,
\begin{align}
  \label{eq:316}
    \|x_{n+1}\|_0=\max\left\{\sup_{T_\varepsilon}\left|\ppp{x_{n+1}}{u}\right|,\sup_{T_\varepsilon}\left|\pppp{x_{n+1}}{u}{v}\right|,\sup_{T_\varepsilon}\left|\ppp{x_{n+1}}{v}\right|\right\}\leq C+aN_0.
\end{align}
Choosing the constant $N_0$ sufficiently large, such that
\begin{align}
  \label{eq:317}
    \frac{C}{1-a}\leq N_0,
\end{align}
where the constant $C$ on the left is the constant $C$ appearing in \eqref{eq:316}, we obtain
\begin{align}
  \label{eq:318}
    \|x_{n+1}\|_0\leq N_0.
\end{align}
Together with  \eqref{eq:292} (which implies $x_{n+1}(0,0)=0$) and \eqref{eq:300} we see that the function $x_{n+1}(u,v)$ lies in $B_{N_0}$ (compare the definition of $B_{N_0}$ in \eqref{eq:156}).

We derive properties of $\ls{\alpha}_{+,n+1}(u)$, $\rs{\beta}_{+,n+1}(u)$. We have (see \eqref{eq:150}, \eqref{eq:151})
\begin{align}
  \label{eq:319}
    \ls{\alpha}_{+,n+1}(u)&=\ls{H}(\ls{\beta}_{+n}(u),\ls{\alpha}_{-n}(u),\ls{\beta}_{-n}(u)),\\
  \rs{\beta}_{+,n+1}(v)&=\rs{H}(\rs{\alpha}_{+n}(v),\rs{\alpha}_{-n}(v),\rs{\beta}_{-n}(v)).\label{eq:320}
\end{align}
Here
\begin{align}
  \label{eq:321}
  \rs{\alpha}_{+n}(v)=\ls{\alpha}_{+n}(av),\qquad \ls{\beta}_{+n}(u)=\rs{\beta}_{+n}(u).
\end{align}
Using the induction hypotheses for $\ls{\alpha}_{+n}$, $\rs{\beta}_{+n}$ we have
\begin{align}
  \label{eq:322}
  \rs{\alpha}_{+n}(0)=\ls{\alpha}_{+n}(0)=\beta_0,\qquad \ls{\beta}_{+n}(0)=\rs{\beta}_{+n}(0)=\beta_0.
\end{align}
From \eqref{eq:263}, \eqref{eq:267} and the analogous equations for $\is{\beta}_{-n}$, $i=1,2$, we have
\begin{align}
  \label{eq:323}
  \ls{\alpha}_{-n}(0)=\ls{\alpha}_0^\ast,\qquad\ls{\beta}_{-n}(0)=\ls{\beta}_0^\ast,\\
  \rs{\alpha}_{-n}(0)=\rs{\alpha}_0^\ast,\qquad\rs{\beta}_{-n}(0)=\rs{\beta}_0^\ast.  \label{eq:324}
\end{align}
Using \eqref{eq:322}, \eqref{eq:323}, \eqref{eq:324} in \eqref{eq:319}, \eqref{eq:320}, we obtain, in view of \eqref{eq:64}, that
\begin{align}
  \label{eq:325}
  \ls{\alpha}_{+,n+1}(0)=\beta_0,\qquad \rs{\beta}_{+,n+1}(0)=\beta_0.
\end{align}
For the first derivative of $\ls{\alpha}_{+,n+1}(u)$ we have (for the functions $\ls{F}$, $\ls{M}_1$, $\ls{M}_2$ see \eqref{eq:69}, \eqref{eq:70})
\begin{align}
  \label{eq:326}
  \frac{d\ls{\alpha}_{+,n+1}}{du}(u)&=\ls{F}(\ls{\beta}_{+n}(u),\ls{\alpha}_{-n}(u),\ls{\beta}_{-n}(u))\frac{d\ls{\beta}_{+n}}{du}(u)\nonumber\\
                                    &\quad +\ls{M}_1(\ls{\beta}_{+n}(u),\ls{\alpha}_{-n}(u),\ls{\beta}_{-n}(u))\frac{d\ls{\alpha}_{-n}}{du}(u)\nonumber\\
  &\quad+\ls{M}_2(\ls{\beta}_{+n}(u),\ls{\alpha}_{-n}(u),\ls{\beta}_{-n}(u))\frac{d\ls{\beta}_{-n}}{du}(u).
\end{align}
Hence, (see \eqref{eq:185}, \eqref{eq:262} (and the analogous statement for $\ls{\beta}_{-n}(u)$)
\begin{align}
  \label{eq:327}
\frac{d\ls{\alpha}_{+,n+1}}{du}(0)&=\ls{F}_0\beta_0'+\ls{M}_{10}\frac{d\ls{\alpha}_-}{du}(0)+\ls{M}_{20}\frac{d\ls{\beta}_-}{du}(0).
\end{align}
(see \eqref{eq:84}, \eqref{eq:85} for $\frac{d\ls{\alpha}_-}{du}(0)$, $\frac{d\ls{\beta}_-}{du}(0)$). Analogously we arrive at
\begin{align}
  \label{eq:328}
      \frac{d\rs{\beta}_{+,n+1}}{dv}(0)&=a\rs{F}_0\alpha_0'+\rs{M}_{10}\frac{d\rs{\alpha}_-}{dv}(0)+\rs{M}_{20}\frac{d\rs{\beta}_-}{dv}(0).
\end{align}
(see \eqref{eq:86}, \eqref{eq:87} for $\frac{d\rs{\alpha}_-}{dv}(0)$, $\frac{d\rs{\beta}_-}{dv}(0)$). We rewrite \eqref{eq:327}, \eqref{eq:328} in the form (see \eqref{eq:90}, \eqref{eq:91} for $a_0$, $b_0$)
\begin{align}
  \label{eq:329}
  \begin{pmatrix}
    \frac{d\ls{\alpha}_{+,n+1}}{du}(0)\\\frac{d\rs{\beta}_{+,n+1}}{dv}(0)
  \end{pmatrix}=
  \begin{pmatrix}
    0 & \ls{F}_0\\a\rs{F}_0 & 0
  \end{pmatrix}
                              \begin{pmatrix}
                                \alpha_0'\\\beta_0'
                              \end{pmatrix}+
  \begin{pmatrix}
    a_0\\b_0
  \end{pmatrix}
\end{align}
Using (see \eqref{eq:88}, \eqref{eq:89})
\begin{align}
  \label{eq:330}
  \begin{pmatrix}
    a_0\\b_0
  \end{pmatrix}=M
  \begin{pmatrix}
    \alpha_0'\\\beta_0'
  \end{pmatrix},\qquad\textrm{where}\qquad M&=
  \begin{pmatrix}
    1&-\ls{F}_0\\-a\rs{F}_0 & 1
  \end{pmatrix},
\end{align}
we obtain
\begin{align}
  \label{eq:331}
  \frac{d\ls{\alpha}_{+,n+1}}{du}(0)=\alpha_0',\qquad\frac{d\rs{\beta}_{+,n+1}}{dv}(0)=\beta_0'.
\end{align}

For the second derivative we have
\begin{align}
  \label{eq:332}
  \frac{d^2\ls{\alpha}_{+,n+1}}{du}(u)&=\ls{F}(\ls{\beta}_{+n}(u),\ls{\alpha}_{-n}(u),\ls{\beta}_{-n}(u))\frac{d^2\ls{\beta}_{+n}}{du^2}(u)\nonumber\\
&\quad +\ls{M}_1(\ls{\beta}_{+n}(u),\ls{\alpha}_{-n}(u),\ls{\beta}_{-n}(u))\frac{d^2\ls{\alpha}_{-n}}{du^2}(u)\nonumber\\
                                      &\quad+\ls{M}_2(\ls{\beta}_{+n}(u),\ls{\alpha}_{-n}(u),\ls{\beta}_{-n}(u))\frac{d^2\ls{\beta}_{-n}}{du^2}(u)\nonumber\\
                                      &\quad +\ls{G}\bigg(\ls{\beta}_{+n}(u),\ls{\alpha}_{-n}(u),\ls{\beta}_{-n}(u),\frac{d\ls{\beta}_{+n}}{du}(u),\frac{d\ls{\alpha}_{-n}}{du}(u),\frac{d\ls{\beta}_{-n}}{du}(u)\bigg).
\end{align}
Here $\ls{G}$ is a smooth function of its arguments. Using \eqref{eq:260}, \eqref{eq:262}, \eqref{eq:263} for the properties of $\ls{\alpha}_{-n}$ and analogous estimates for the properties of $\ls{\beta}_{-n}$ together with the fact that $N_0$ is a fixed numerical constant and \eqref{eq:185} for the properties of $\ls{\beta}_{+n}$ we obtain
\begin{align}
  \label{eq:333}
  \frac{d^2\ls{\alpha}_{+,n+1}}{du^2}(u)\leq(\ls{F}_0+Cu)B+C.
\end{align}
Analogously we arrive at (see the third of \eqref{eq:183} for the origin of the additional prefactor $a^2$)
\begin{align}
  \label{eq:334}
  \frac{d^2\rs{\beta}_{+,n+1}}{dv^2}(v)\leq(a^2\rs{F}_0+Cv)A+C.
\end{align}
Taking the supremum of \eqref{eq:333} in $[0,a\varepsilon]$ and of \eqref{eq:334} in $[0,\varepsilon]$ we obtain (see \eqref{eq:160}, \eqref{eq:161} for the definitions of $\|f\|_1$, $\|f\|_2$)
\begin{align}
  \label{eq:335}
  \|\ls{\alpha}_{+,n+1}\|_1&\leq(\ls{F}_0+C\varepsilon)B+C,\\
  \|\ls{\beta}_{+,n+1}\|_2&\leq(a^2\rs{F}_0+C\varepsilon)A+C.\label{eq:336}
\end{align}
Choosing the constant $A$ as the right hand side of \eqref{eq:335}, i.e.
\begin{align}
  \label{eq:337}
  A=(\ls{F}_0+C\varepsilon)B+C
\end{align}
and using this in \eqref{eq:336} we obtain
\begin{align}
  \label{eq:338}
\|\ls{\beta}_{+,n+1}\|_2&\leq(a^2\rs{F}_0+C\varepsilon)(\ls{F}_0+C\varepsilon)B+C.
\end{align}
Recalling that $\ls{F}_0\rs{F}_0=a^2$ (see \eqref{eq:74}) and choosing $\varepsilon$ sufficiently small, such that
\begin{align}
  \label{eq:339}
  (a^2\rs{F}_0+C\varepsilon)(\ls{F}_0+C\varepsilon)\leq k<1,
\end{align}
we have
\begin{align}
  \label{eq:340}
\|\ls{\beta}_{+,n+1}\|_2&\leq kB+C.
\end{align}
Choosing the constant $B$ such that
\begin{align}
  \label{eq:341}
  \frac{C}{1-k}\leq B,
\end{align}
where the constant $C$ is the one appearing in \eqref{eq:340}, we have
\begin{align}
  \label{eq:342}
\|\ls{\beta}_{+,n+1}\|_2&\leq B.
\end{align}
The choice of $B$ also makes $A$ into a fixed numerical constant. We have
\begin{align}
  \label{eq:343}
  \|\ls{\alpha}_{+,n+1}\|_1&\leq A.
\end{align}
Together with \eqref{eq:325}, \eqref{eq:331} we conclude that $(\ls{\alpha}_{+,n+1},\rs{\beta}_{+,n+1})\in B_A\times B_B$ (see \eqref{eq:157}, \eqref{eq:158} for the definition of $B_A$, $B_B$). This completes the proof of the inductive step.
\end{proof}

\subsection{Convergence}
\begin{proposition}
  For $\varepsilon$ sufficiently small, the sequence
  \begin{align}
    \label{eq:344}
    (\ls{\alpha}_{+n},\rs{\beta}_{+n},x_n);n=0,1,2,\ldots)
  \end{align}
  converges in $B_A\times B_B\times B_{N_0}$.
\end{proposition}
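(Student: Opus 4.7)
The plan is to prove the convergence by a contraction-mapping argument in a norm weaker than the $C^2$-norm used for the invariance proof in the preceding proposition. Concretely, I would measure each consecutive difference in the $C^1$-norm and use the uniform $C^2$-bounds from the preceding proposition to control all auxiliary Lipschitz-type constants. Introducing
\begin{align}
\Delta x_n\defeq x_{n+1}-x_n,\qquad \Delta\ls{\alpha}_{+n}\defeq\ls{\alpha}_{+,n+1}-\ls{\alpha}_{+n},\qquad \Delta\rs{\beta}_{+n}\defeq\rs{\beta}_{+,n+1}-\rs{\beta}_{+n},
\end{align}
and the single quantity $\delta_n\defeq\sup_{T_\varepsilon}|\pp{\Delta x_n}{u}|+\sup_{T_\varepsilon}|\pp{\Delta x_n}{v}|+\sup_{[0,a\varepsilon]}|(\Delta\ls{\alpha}_{+n})'|+\sup_{[0,\varepsilon]}|(\Delta\rs{\beta}_{+n})'|$, I would aim to establish $\delta_{n+1}\leq q\,\delta_n$ with a constant $q<1$ uniform in $n$, once $\varepsilon$ is chosen small enough.

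To estimate $\sup_{T_\varepsilon}|\pp{\Delta x_{n+1}}{u}|$ I would subtract two consecutive instances of \eqref{eq:141} and differentiate as in \eqref{eq:302}. By the remark after \eqref{eq:153}, the leading non-integral contribution is $\ls{\gamma}_n(u)\,a\,\pp{\Delta x_n}{u}(au,u)$, whose coefficient tends to $a<1$ as $\varepsilon\to 0$ because $\ls{\gamma}_n(0)=1$ by \eqref{eq:35}. Every remaining term is either multiplied by an $\varepsilon$-small integration or is a difference of a coefficient function ($M_n,\mu_n,\nu_n,\is{\Gamma}_n,\ls{\gamma}_n,\is{V}_n,\is{\alpha}_{\pm n},\is{\beta}_{\pm n}$, etc.) which, thanks to the uniform $C^2$-bounds, is Lipschitz in $(x_n,\ls{\alpha}_{+n},\rs{\beta}_{+n})$ with constant independent of $n$. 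This produces $\sup|\pp{\Delta x_{n+1}}{u}|\leq(a+C\varepsilon)\sup|\pp{\Delta x_n}{u}|+C\varepsilon\delta_n$, and the estimate for $\pp{\Delta x_{n+1}}{v}$ is obtained in the same way starting from \eqref{eq:303}. For the shock recursions \eqref{eq:150}, \eqref{eq:151} I would subtract consecutive iterates, differentiate once, and combine the two resulting inequalities into a $2\times 2$ linear system of exactly the form \eqref{eq:329}. The off-diagonal matrix there has spectral radius $\sqrt{a\ls{F}_0\rs{F}_0}=a^{3/2}<1$ by \eqref{eq:74}, and this is exactly the same mechanism which closes \eqref{eq:339}; together with the $\Landau(\varepsilon)$-smallness of all corrections it yields $\sup|(\Delta\ls{\alpha}_{+,n+1})'|+\sup|(\Delta\rs{\beta}_{+,n+1})'|\leq(a^{3/2}+C\varepsilon)(\sup|(\Delta\ls{\alpha}_{+n})'|+\sup|(\Delta\rs{\beta}_{+n})'|)+C\varepsilon\sup|\pp{\Delta x_n}{u}|$.

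Adding the four estimates gives $\delta_{n+1}\leq q\,\delta_n$ with $q\defeq\max\{a,a^{3/2}\}+C\varepsilon=a+C\varepsilon<1$ for $\varepsilon$ sufficiently small. Hence $(\ls{\alpha}_{+n},\rs{\beta}_{+n},x_n)$ is Cauchy in the Banach space $C^1(T_\varepsilon)\times C^1[0,a\varepsilon]\times C^1[0,\varepsilon]$ and converges there. The uniform $C^2$-boundedness from the previous proposition together with Arzel\`a--Ascoli then forces convergence in $C^{1,\mu}$ for every $\mu<1$ and shows that the limit inherits the pointwise conditions at the origin together with the bounds $\|f\|_0\leq N_0$, $\|f\|_1\leq A$, $\|f\|_2\leq B$ by lower semicontinuity of the sup-norm under weak-$\ast$ limits of the second derivatives. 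Thus the limit lies in $B_A\times B_B\times B_{N_0}$, which is the content of the proposition. The principal obstacle is the careful bookkeeping of the coefficient differences: every one of $M_n,\mu_n,\nu_n,\is{\Gamma}_n,\ls{\gamma}_n,\is{V}_n,\is{t}_{+n},\is{x}_{+n},\is{\alpha}_{\pm n},\is{\beta}_{\pm n}$ must be shown to be Lipschitz in $(x_n,\ls{\alpha}_{+n},\rs{\beta}_{+n})$ with respect to the $C^1$-norm uniformly in $n$, and the nested integrations in \eqref{eq:141}--\eqref{eq:147} must be unwound to expose the crucial prefactor $a$ (respectively $a^{3/2}$). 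Once these Lipschitz estimates are in place the contraction closes by the same kind of smallness-in-$\varepsilon$ argument as already used in the invariance step.
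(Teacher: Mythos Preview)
Your proposal contains an internal inconsistency that turns into a genuine gap. You define $\delta_n$ using only \emph{first} derivatives of $\Delta x_n$, but the equations you invoke, \eqref{eq:302} and the remark around \eqref{eq:153}, are statements about \emph{second} derivatives. The factor $a$ in front of $\pp{x_n}{u}(au,u)$ you claim as the leading coefficient is produced by the chain rule when you differentiate $\pp{x_n}{u}(au,u)$ once more in $u$; it is simply not there at first-derivative level. Indeed, rewriting $\int_0^u\ls{\Lambda}_n\,du'$ by integration by parts (using $\pppp{x_n}{u}{v}=M_{n-1}$) gives
\[
\pp{x_{n+1}}{u}(u,v)=\ls{\gamma}_n(u)\,\pp{x_n}{u}(au,u)+\int_0^u\ls{\gamma}_n\,(M_n-M_{n-1})\,du'+\frac{1}{\ls{\Gamma}_n(u)}\int_{au}^u M_n\,du'+\int_u^v M_n\,dv',
\]
so that $\sup|\Delta_{n+1}\pp{x}{u}|\leq(1+C\varepsilon)\sup|\Delta_n\pp{x}{u}|+C\varepsilon(\cdots)$ with leading coefficient $\ls{\gamma}_{n-1}(u)\approx 1$, \emph{not} $a$. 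No contraction follows in $C^1$ for the $x$-part. A second, smaller slip: the coupling from $x$ to $(\ls{\alpha}_+,\rs{\beta}_+)$ after one differentiation of \eqref{eq:150}, \eqref{eq:151} is $O(1)$, not $O(\varepsilon)$, since $\Delta_n\frac{d\ls{\alpha}_-}{du}$ depends on $\Delta_n\frac{d\ls{t}_+}{du}$ and $\Delta_n\frac{d\ls{x}_+}{du}$ with bounded (not $\varepsilon$-small) coefficients.

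The paper avoids both issues by running the contraction directly in the $C^2$-type norms $\|\cdot\|_0,\|\cdot\|_1,\|\cdot\|_2$. From \eqref{eq:302}, \eqref{eq:303} one gets the honest factor $a$ in front of $\sup|\Delta_n\ppp{x}{u}|$ and $\sup|\Delta_n\ppp{x}{v}|$ (see \eqref{eq:413}, \eqref{eq:414}), while from \eqref{eq:332} one gets the factors $\ls{F}_0$ and $a^2\rs{F}_0$ for the second derivatives of $\ls{\alpha}_+,\rs{\beta}_+$ (see \eqref{eq:421}, \eqref{eq:422}). The three estimates are then assembled into a $3\times 3$ matrix inequality \eqref{eq:423}--\eqref{eq:424}, whose characteristic polynomial is $(k-\lambda)(\lambda^2-a^4)+\varepsilon\,p(\lambda)$ with $k<1$; for small $\varepsilon$ all eigenvalues lie strictly inside the unit disc, giving convergence in $B_A\times B_B\times B_{N_0}$ directly, with no Arzel\`a--Ascoli step needed. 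If you want your outline to go through, replace the first-derivative quantities in $\delta_n$ by the second-derivative seminorms $\|\cdot\|_0,\|\cdot\|_1,\|\cdot\|_2$; then your sketch becomes essentially the paper's argument.
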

\begin{proof}
  We use the notation
\begin{align}
  \label{eq:345}
  \Delta_nf\defeq f_n-f_{n-1}.
\end{align}
We first look at differences of $\ls{\alpha}_+$, $\rs{\beta}_+$. Using the leading order behavior of $\ls{\alpha}_{+n}(u)$ and $\rs{\beta}_{+n}(v)$ as given in \eqref{eq:172}, \eqref{eq:173} respectively and recalling the definitions of $\|f\|_1$, $\|f\|_2$ in \eqref{eq:160}, \eqref{eq:161} respectively, we have
\begin{alignat}{3}
  \label{eq:346}
  |\Delta_n\ls{\alpha}_+(u)|&\leq u^2\|\Delta_n\ls{\alpha}_+\|_1,\qquad& \bigg|\Delta_n\frac{d\ls{\alpha}_+}{du}(u)\bigg|&\leq u\|\Delta_n\ls{\alpha}_+\|_1,\\
  |\Delta_n\rs{\beta}_+(v)|&\leq v^2\|\Delta_n\rs{\beta}_+\|_2,\qquad& \bigg|\Delta_n\frac{d\rs{\beta}_+}{dv}(v)\bigg|&\leq v\|\Delta_n\rs{\beta}_+\|_2.  \label{eq:347}
\end{alignat}
Using $\rs{\alpha}_{+n}(v)=\ls{\alpha}_{+n}(av)$ we have
\begin{alignat}{5}
  \label{eq:348}
  |\Delta_n\rs{\alpha}_+(v)|&\leq v^2\|\Delta_n\ls{\alpha}_+\|_1,\quad&\bigg|\Delta_n\frac{d\rs{\alpha}_+}{dv}(v)\bigg|&\leq av\|\Delta_n\ls{\alpha}_+\|_1,\quad&\bigg|\Delta_n\frac{d^2\rs{\alpha}_+}{dv^2}(v)\bigg|&\leq a^2\|\Delta_n\ls{\alpha}_+\|_1,\\
\intertext{while using $\ls{\beta}_{+n}(u)=\rs{\beta}_{+n}(u)$ we have}
  \label{eq:349}
    |\Delta_n\ls{\beta}_+(u)|&\leq u^2\|\Delta_n\rs{\beta}_+\|_2,\quad&\bigg|\Delta_n\frac{d\ls{\beta}_+}{du}(u)\bigg|&\leq u\|\Delta_n\rs{\beta}_+\|_2,\quad&\bigg|\Delta_n\frac{d^2\ls{\beta}_+}{du^2}(u)\bigg|&\leq \|\Delta_n\rs{\beta}_+\|_2.
\end{alignat}
Using $\alpha_n(u,v)=\ls{\alpha}_{+n}(u)$ and $\beta_n(u,v)=\rs{\beta}_{+n}(v)$ we have
\begin{alignat}{3}
  \label{eq:350}
  |\Delta_n\alpha|&\leq u^2\|\Delta_n\ls{\alpha}_+\|,\qquad&\left|\Delta_n\pp{\alpha}{u}\right|&\leq u\|\Delta_n\ls{\alpha}_+\|,\qquad&\left|\Delta_n\ppp{\alpha}{u}\right|&\leq\|\Delta_n\ls{\alpha}_{+}\|_1,\\
    |\Delta_n\beta|&\leq v^2\|\Delta_n\rs{\beta}_+\|,\qquad&\left|\Delta_n\pp{\beta}{v}\right|&\leq v\|\Delta_n\rs{\beta}_+\|,\qquad&\left|\Delta_n\ppp{\beta}{v}\right|&\leq\|\Delta_n\rs{\beta}_{+}\|_2.\label{eq:351}
\end{alignat}
All other derivatives of second or lower order vanish.

We turn to estimates of $\Delta_nx(u,v)$ and derivatives thereof. From \eqref{eq:188}, \eqref{eq:189} together with \eqref{eq:186} we have
\begin{align}
  \label{eq:352}
  \left|\Delta_n\pp{x}{u}\right|&\leq v\left(\sup_{T_\varepsilon}\left|\Delta_n\ppp{x}{u}\right|+\sup_{T_\varepsilon}\left|\Delta_n\pppp{x}{u}{v}\right|\right),\\
    \left|\Delta_n\pp{x}{v}\right|&\leq v\left(\sup_{T_\varepsilon}\left|\Delta_n\ppp{x}{v}\right|+\sup_{T_\varepsilon}\left|\Delta_n\pppp{x}{u}{v}\right|\right).  \label{eq:353}
\end{align}
Using these in \eqref{eq:192} we obtain
\begin{align}
  \label{eq:354}
  |\Delta_nx|\leq v^2\|\Delta_nx\|_0.
\end{align}

We look at $\Delta_ng$, $\Delta_nh$. We recall that
\begin{align}
  \label{eq:355}
  g_n(u,v)=G(\alpha_n(u,v),\beta_n(u,v)),\qquad h_n(u,v)=H(\alpha_n(u,v),\beta_n(u,v)),
\end{align}
where $G(\alpha,\beta)$, $H(\alpha,\beta)$ are smooth functions of their arguments (see \eqref{eq:196} for the definition of those functions). In view of \eqref{eq:172}, \eqref{eq:173}, the points $(\ls{\alpha}_{+n}(u),\rs{\beta}_{+n}(v))$ and $(\ls{\alpha}_{+n-1}(u),\rs{\beta}_{+n-1}(v))$ lie in a ball in $\mathbb{R}^2$ centered at $(\beta_0,\beta_0)$ (recall that $\alpha_0=\beta_0$). Hence so does the line segment joining them. We therefore have
\begin{align}
  \label{eq:356}
  |\Delta_ng|&\leq Cv^2\left(\|\Delta_n\ls{\alpha}_{+}\|_1+\|\Delta_n\rs{\beta}_+\|_2\right),\\
  \left|\Delta_n\pp{g}{u}\right|,\left|\Delta_n\pp{g}{v}\right|&\leq Cv\left(\|\Delta_n\ls{\alpha}_{+}\|_1+\|\Delta_n\rs{\beta}_+\|_2\right),\label{eq:357}\\
  \left|\Delta_n\ppp{g}{u}\right|,\left|\Delta_n\ppp{g}{v}\right|&\leq C\left(\|\Delta_n\ls{\alpha}_{+}\|_1+\|\Delta_n\rs{\beta}_+\|_2\right),\label{eq:358}\\
  \left|\Delta_n\pppp{g}{u}{v}\right|&\leq Cv\left(\|\Delta_n\ls{\alpha}_{+}\|_1+\|\Delta_n\rs{\beta}_+\|_2\right).\label{eq:359}
\end{align}
The same estimates hold with $h$ in the role of $g$.

We look at $\phi$, $\psi$. We recall
\begin{align}
  \label{eq:360}
  \phi_n(u,v)=g_n(u,v)\pp{x_n}{u}(u,v),\qquad\psi_n(u,v)=h_n(u,v)\pp{x_n}{v}(u,v).
\end{align}
Using \eqref{eq:352}, \eqref{eq:353}, \eqref{eq:356} (and the latter also with $h$ in the role of $g$) we obtain
\begin{align}
  \label{eq:361}
  |\Delta_n\phi|&\leq C\left(v^2\left(\|\Delta_n\ls{\alpha}_{+}\|_1+\|\Delta_n\rs{\beta}_+\|_2\right)+v\|\Delta_nx\|_0\right),\\
  \label{eq:362}
  |\Delta_n\psi|&\leq C\left(v^2\left(\|\Delta_n\ls{\alpha}_{+}\|_1+\|\Delta_n\rs{\beta}_+\|_2\right)+v\|\Delta_nx\|_0\right).
\end{align}
Using in addition also \eqref{eq:357} (and this one also with $h$ in the role of $g$) we obtain for differences of derivatives of $\phi$ and $\psi$
\begin{align}
  \label{eq:363}
  \left|\Delta_n\pp{\phi}{u}\right|,\left|\Delta_n\pp{\phi}{v}\right|,\left|\Delta_n\pp{\psi}{u}\right|,\left|\Delta_n\pp{\psi}{v}\right|&\leq C\left(v\left(\|\Delta_n\ls{\alpha}_{+}\|_1+\|\Delta_n\rs{\beta}_+\|_2\right)+\|\Delta_nx\|_0\right).
\end{align}

We turn to $\Delta_nt(u,v)$. We recall
\begin{align}
  \label{eq:364}
  t_n(u,v)=\int_0^u(\phi_n+\psi_n)(u',u')du'+\int_u^v\psi_n(u,v')dv'.
\end{align}
Using \eqref{eq:361}, \eqref{eq:362} we obtain
\begin{align}
  \label{eq:365}
  |\Delta_nt|\leq C\left(v^3\left(\|\Delta_n\ls{\alpha}_{+}\|_1+\|\Delta_n\rs{\beta}_+\|_2\right)+v^2\|\Delta_nx\|_0\right).
\end{align}
From $\pp{t_n}{v}(u,v)=\psi_n(u,v)$ we obtain, using \eqref{eq:362},
\begin{align}
  \label{eq:366}
  \left|\Delta_n\pp{t}{v}\right|\leq C\left(v^2\left(\|\Delta_n\ls{\alpha}_{+}\|_1+\|\Delta_n\rs{\beta}_+\|_2\right)+v\|\Delta_nx\|_0\right).
\end{align}
From
\begin{align}
  \label{eq:367}
  \pp{t_n}{u}(u,v)=\phi_n(u,u)+\int_u^v\pp{\psi_n}{u}(u,v')dv'
\end{align}
we obtain, using \eqref{eq:361}, \eqref{eq:363},
\begin{align}
  \label{eq:368}
  \left|\Delta_n\pp{t}{u}\right|\leq C\left(v^2\left(\|\Delta_n\ls{\alpha}_{+}\|_1+\|\Delta_n\rs{\beta}_+\|_2\right)+v\|\Delta_nx\|_0\right).
\end{align}
From $\pppp{t_n}{u}{v}(u,v)=\pp{\psi_n}{u}(u,v)$ we obtain, using \eqref{eq:363},
\begin{align}
  \label{eq:369}
  \left|\Delta_n\pppp{t}{u}{v}\right|\leq C\left(v\left(\|\Delta_n\ls{\alpha}_{+}\|_1+\|\Delta_n\rs{\beta}_+\|_2\right)+\|\Delta_nx\|_0\right).
\end{align}
From $\ppp{t_n}{v}(u,v)=\pp{\psi_n}{v}(u,v)$ we obtain, using \eqref{eq:363},
\begin{align}
  \label{eq:370}
  \left|\Delta_n\ppp{t}{v}\right|\leq C\left(v\left(\|\Delta_n\ls{\alpha}_{+}\|_1+\|\Delta_n\rs{\beta}_+\|_2\right)+\|\Delta_nx\|_0\right).
\end{align}
For $\Delta_n\ppp{t}{u}$ we start with equation (see \eqref{eq:218})
\begin{align}
  \label{eq:371}
  \ppp{t_n}{u}(u,v)=\pp{\phi_n}{u}(u,u)+\pp{\phi_n}{v}(u,u)+\pp{}{u}\left(\int_u^v\pp{\psi_n}{u}(u,v')dv'\right).
\end{align}
Differences of the first two terms can be estimated by \eqref{eq:363}. For the last term we use \eqref{eq:220}. Taking the difference of \eqref{eq:220} and itself but with $\psi_{n-1}$ in the role of $\psi_n$ and using the estimates for the differences $\Delta_nh$ and derivatives thereof as given in \eqref{eq:356}, \eqref{eq:357}, \eqref{eq:358}, \eqref{eq:359} (but with $h$ in the role of $g$) together with the estimates for the differences $\Delta_nx$ and derivatives thereof as given in \eqref{eq:352}, \eqref{eq:353}. We obtain
\begin{align}
  \label{eq:372}
  \left|\Delta_n\left(\pp{}{u}\left(\int_u^v\pp{\psi}{u}(u,v')dv'\right)\right)\right|&\leq C\left(v\left(\|\Delta_n\ls{\alpha}_{+}\|_1+\|\Delta_n\rs{\beta}_+\|_2\right)+\|\Delta_nx\|_0\right).
\end{align}
Using this together with \eqref{eq:363} to estimate differences of the first two terms in \eqref{eq:371} we obtain
\begin{align}
  \label{eq:373}
  \left|\Delta_n\ppp{t}{u}\right|\leq C\left(v\left(\|\Delta_n\ls{\alpha}_{+}\|_1+\|\Delta_n\rs{\beta}_+\|_2\right)+\|\Delta_nx\|_0\right).
\end{align}
Now we look at
\begin{alignat}{3}
  \label{eq:374}
  \ls{t}_{+n}(u)&=t_n(u,u),&\qquad \rs{t}_{+n}(v)&=t_n(av,v),\\
  \ls{x}_{+n}(u)&=x_n(u,u),& \rs{x}_{+n}(v)&=x_n(av,v).  \label{eq:375}
\end{alignat}
From \eqref{eq:365}, \eqref{eq:366}, \eqref{eq:368}, \eqref{eq:369}, \eqref{eq:370}, \eqref{eq:373} we have
\begin{align}
  \label{eq:376}
  |\Delta_n\ls{t}_+(u)|&\leq C\left(u^3\left(\|\Delta_n\ls{\alpha}_{+}\|_1+\|\Delta_n\rs{\beta}_+\|_2\right)+u^2\|\Delta_nx\|_0\right),\\
  \label{eq:377}
  \left|\Delta_n\frac{d\ls{t}_+}{du}(u)\right|&\leq C\left(u^2\left(\|\Delta_n\ls{\alpha}_{+}\|_1+\|\Delta_n\rs{\beta}_+\|_2\right)+u\|\Delta_nx\|_0\right),\\
  \label{eq:378}
  \left|\Delta_n\frac{d^2\ls{t}_+}{du^2}(u)\right|&\leq C\left(u\left(\|\Delta_n\ls{\alpha}_{+}\|_1+\|\Delta_n\rs{\beta}_+\|_2\right)+\|\Delta_nx\|_0\right).
\end{align}
The same estimates hold also for $\Delta_n\rs{t}_+$, $\Delta_n\frac{d\rs{t}_+}{dv}$, $\Delta_n\frac{d^2\rs{t}_+}{dv^2}$ but with $v$ in the role of $u$. From \eqref{eq:352}, \eqref{eq:353}, \eqref{eq:354} we have
\begin{align}
  \label{eq:379}
  |\Delta_n\ls{x}_+(u)|&\leq u^2\|\Delta_nx\|_0,\\
  \label{eq:380}
  \bigg|\Delta_n\frac{d\ls{x}_+}{du}(u)\bigg|&\leq Cu\|\Delta_nx\|_0,\\
  \label{eq:381}
  \bigg|\Delta_n\frac{d^2\ls{x}_+}{du^2}(u)\bigg|&\leq C\|\Delta_nx\|_0.
\end{align}
The same estimates hold also for $\Delta_n\rs{x}_+$, $\Delta\frac{d\rs{x}_+}{dv}$, $\Delta\frac{d^2\rs{x}_+}{dv^2}$ but with $v$ in the role of $u$.

We turn to estimates for $\Delta_n\is{\alpha}_-$, $\Delta_n\is{\beta}_-$, $i=1,2$. In view of
\begin{align}
  \label{eq:382}
  \ls{\alpha}_{-n}(u)=\ls{\alpha}_-^\ast(\ls{t}_+(u),\ls{x}_+(u)),
\end{align}
the above estimates for $\Delta_n\ls{t}_+$, $\Delta_n\ls{x}_+$ yield
\begin{align}
  \label{eq:383}
  |\Delta_n\ls{\alpha}_-(u)|,|\Delta_n\ls{\beta}_-(u)|&\leq C\left(u^3\left(\|\Delta_n\ls{\alpha}_{+}\|_1+\|\Delta_n\rs{\beta}_+\|_2\right)+u^2\|\Delta_nx\|_0\right),\\
  \bigg|\Delta_n\frac{d\ls{\alpha}_-}{du}(u)\bigg|,\bigg|\Delta_n\frac{d\ls{\beta}_-}{du}(u)\bigg|&\leq C\left(u^2\left(\|\Delta_n\ls{\alpha}_{+}\|_1+\|\Delta_n\rs{\beta}_+\|_2\right)+u\|\Delta_nx\|_0\right),\label{eq:384}\\
  \bigg|\Delta_n\frac{d^2\ls{\alpha}_-}{du^2}(u)\bigg|,\bigg|\Delta_n\frac{d^2\ls{\beta}_-}{du^2}(u)\bigg|&\leq C\left(u\left(\|\Delta_n\ls{\alpha}_{+}\|_1+\|\Delta_n\rs{\beta}_+\|_2\right)+\|\Delta_nx\|_0\right).\label{eq:385}
\end{align}
The same estimates hold for $\Delta_n\rs{\alpha}_-$, $\Delta_n\rs{\beta}_-$ and its derivatives but with $v$ in the role of $u$.

We look at $\Delta_n\is{V}$, $i=1,2$ and its derivatives. We recall
\begin{align}
  \label{eq:386}
  \ls{V}_n(u)=\frac{\jump{\ls{\rho}_n(u)\ls{w}_n(u)}}{\jump{\ls{\rho}_n(u)}}.
\end{align}
where
\begin{align}
  \label{eq:387}
  \jump{f_n(u)}=f_{+n}(u)-f_{-n}(u)
\end{align}
and
\begin{align}
  \label{eq:388}
  \ls{\rho}_{\pm}(u)=\rho(\ls{\alpha}_{\pm n}(u),\ls{\beta}_{\pm n}(u)),\qquad \ls{w}_{\pm}(u)=w(\ls{\alpha}_{\pm n}(u),\ls{\beta}_{\pm n}(u)).
\end{align}
We recall that $\rho(\alpha,\beta)$, $w(\alpha,\beta)$ are smooth functions of their arguments. In view of \eqref{eq:263} and the analogous estimate with $\beta$ in the role of $\alpha$, the points $(\ls{\alpha}_{-n}(u),\ls{\beta}_{-n}(u))$ and $(\ls{\alpha}_{-n-1}(u),\ls{\beta}_{-n-1}(u))$ lie in a ball in $\mathbb{R}^2$ centered at $(\ls{\alpha}_0^\ast,\ls{\beta}_0^\ast)$. Hence so does the line segment joining them. The analogous statement involving $\alpha_{+n}$, $\beta_{+n}$ has been made right above \eqref{eq:356} (the statement right above \eqref{eq:356} is actually about $(\ls{\alpha}_{+n},\rs{\beta}_{+n}$) but since $\ls{\beta}_{+n}(u)=\rs{\beta}_{+n}(u)$ it is equally applicable). Therefore, the above estimates for $\Delta_n\ls{\alpha}_\pm$, $\Delta_n\ls{\beta}_\pm$ and their first derivative (see \eqref{eq:346}, \eqref{eq:349}, \eqref{eq:383}, \eqref{eq:384}) yield
\begin{align}
  \label{eq:389}
  |\Delta_n\ls{V}(u)|&\leq Cu^2\Big(\|\Delta_n\ls{\alpha}_{+}\|_1+\|\Delta_n\rs{\beta}_+\|_2+\|\Delta_nx\|_0\Big),\\
  \bigg|\Delta_n\frac{d\ls{V}}{du}(u)\bigg|&\leq Cu\Big(\|\Delta_n\ls{\alpha}_{+}\|_1+\|\Delta_n\rs{\beta}_+\|_2+\|\Delta_nx\|_0\Big).\label{eq:390}
\end{align}
The same estimates hold for $\Delta_n\rs{V}(v)$, $\Delta_n\frac{d\rs{V}}{dv}(v)$ but with $v$ in the role of $u$ on the right hand sides of the estimates.

We turn to estimates for $\Delta_n\cin$, $\Delta_n\cout$. We recall
\begin{align}
  \label{eq:391}
  \cin_n(u,v)=\cin(\alpha_n(u,v),\beta_n(u,v)),\qquad  \cout_n(u,v)=\cout(\alpha_n(u,v),\beta_n(u,v)).  
\end{align}
$\cin$, $\cout$ being smooth functions of their arguments we deduce, using \eqref{eq:350}, \eqref{eq:351},
\begin{align}
  \label{eq:392}
  |\Delta_n\cin|&\leq Cv^2\Big(\|\Delta_n\ls{\alpha}_{+}\|_1+\|\Delta_n\rs{\beta}_+\|_2\Big),\\
  \bigg|\Delta_n\pp{\cin}{u}\bigg|,\bigg|\Delta_n\pp{\cin}{v}\bigg|,\bigg|\Delta_n\pppp{\cin}{u}{v}\bigg|&\leq Cv\Big(\|\Delta_n\ls{\alpha}_{+}\|_1+\|\Delta_n\rs{\beta}_+\|_2\Big),\label{eq:393}\\
  \bigg|\Delta_n\ppp{\cin}{u}\bigg|,\bigg|\Delta_n\ppp{\cin}{v}\bigg|&\leq C\Big(\|\Delta_n\ls{\alpha}_{+}\|_1+\|\Delta_n\rs{\beta}_+\|_2\Big)\label{eq:394}
\end{align}
and the same estimates hold with $\cout$ in the role of $\cin$.

We turn to estimate $\Delta\is{\Gamma}$, $i=1,2$ and its derivative. We recall
\begin{align}
  \label{eq:395}
  \ls{\Gamma}_n=\frac{\ls{\cout}_{+n}}{\ls{\cin}_{+n}}\,\,\frac{\ls{V}_n-\ls{\cin}_{+n}}{\ls{\cout}_{+n}-\ls{V}_n},
\end{align}
where
\begin{align}
  \label{eq:396}
    \ls{\cin}_{+n}(u)=\cin_n(u,u),\qquad  \ls{\cout}_{+n}(u)=\cout_n(u,u)
\end{align}
and $\cin_n(u,v)$, $\cout_n(u,v)$ are given by \eqref{eq:391}. Using \eqref{eq:389}, \eqref{eq:390} together with \eqref{eq:392}, \eqref{eq:393}, \eqref{eq:394} (and the analogous estimates with $\cout$ in the role of $\cin$), we obtain
\begin{align}
  \label{eq:397}
  |\Delta_n\ls{\Gamma}(u)|&\leq Cu^2\Big(\|\Delta_n\ls{\alpha}_{+}\|_1+\|\Delta_n\rs{\beta}_+\|_2+\|\Delta_nx\|_0\Big),\\
  \bigg|\Delta_n\frac{d\ls{\Gamma}}{du}(u)\bigg|&\leq Cu\Big(\|\Delta_n\ls{\alpha}_{+}\|_1+\|\Delta_n\rs{\beta}_+\|_2+\|\Delta_nx\|_0\Big).\label{eq:398}
\end{align}
The same estimates hold for $\Delta_n\rs{\Gamma}(v)$, $\Delta_n\frac{d\rs{\Gamma}}{dv}(v)$ but with $v$ in the role of $u$ on the right hand sides of the estimates (see \eqref{eq:139} for $\rs{\Gamma}_n$).

We turn to estimating $\Delta_nM$ and derivatives thereof. In order to do this we first need to estimate differences of $\mu_n$, $\nu_n$ and derivatives thereof. We recall
\begin{align}
  \label{eq:399}
  \mu_n(u,v)&=\frac{1}{\cout_n(u,v)-\cin_n(u,v)}\,\frac{\cout_n(u,v)}{\cin_n(u,v)}\,\pp{\cin_n}{v}(u,v),\\
  \label{eq:400}
  \nu_n(u,v)&=\frac{1}{\cout_n(u,v)-\cin_n(u,v)}\,\frac{\cin_n(u,v)}{\cout_n(u,v)}\,\pp{\cout_n}{u}(u,v).
\end{align}
Using \eqref{eq:392}, \eqref{eq:393}, \eqref{eq:394} we obtain
\begin{align}
  \label{eq:401}
  |\Delta_n\mu|,|\Delta_n\nu|,\left|\Delta_n\pp{\mu}{u}\right|,\left|\Delta_n\pp{\nu}{v}\right|&\leq Cv\Big(\|\Delta_n\ls{\alpha}_{+}\|_1+\|\Delta_n\rs{\beta}_+\|_2\Big),\\
  \label{eq:402}
  \left|\Delta_n\pp{\mu}{v}\right|,\left|\Delta_n\pp{\nu}{u}\right|&\leq C\Big(\|\Delta_n\ls{\alpha}_{+}\|_1+\|\Delta_n\rs{\beta}_+\|_2\Big).
\end{align}
In view of
\begin{align}
  \label{eq:403}
  M_n(u,v)=\mu_n(u,v)\pp{x_n}{u}(u,v)+\nu_n(u,v)\pp{x_n}{v}(u,v),
\end{align}
the estimates \eqref{eq:401}, \eqref{eq:402} together with \eqref{eq:352}, \eqref{eq:353} imply
\begin{align}
  \label{eq:404}
  |\Delta_nM|&\leq Cv\Big(\|\Delta_n\ls{\alpha}_{+}\|_1+\|\Delta_n\rs{\beta}_+\|_2+\|\Delta_nx\|_0\Big),\\
\label{eq:405}
  \left|\Delta_n\pp{M}{u}\right|,\left|\Delta_n\pp{M}{v}\right|&\leq C\Big(\|\Delta_n\ls{\alpha}_{+}\|_1+\|\Delta_n\rs{\beta}_+\|_2+\|\Delta_nx\|_0\Big).
\end{align}

Now we estimate differences of the second derivatives of $x_{n+1}(u,v)$. From (see \eqref{eq:301})
\begin{align}
  \label{eq:406}
  \pppp{x_{n+1}}{u}{v}(u,v)=M_n(u,v)
\end{align}
we obtain
\begin{align}
  \label{eq:407}
  \left|\Delta_{n+1}\pppp{x}{u}{v}\right|\leq Cv\Big(\|\Delta_n\ls{\alpha}_{+}\|_1+\|\Delta_n\rs{\beta}_+\|_2+\|\Delta_nx\|_0\Big).
\end{align}
To estimate $\Delta_{n+1}\ppp{x}{u}(u,v)$ we use \eqref{eq:302}:
\begin{align}
  \label{eq:408}
    \ppp{x_{n+1}}{u}(u,v)&=\ls{\Lambda}(u)-\frac{1}{\left(\ls{\Gamma}_n(u)\right)^2}\frac{d\ls{\Gamma}_n}{du}(u)\int_{au}^{u}M_n(u',u)du'\nonumber\\
                           &\quad +\frac{1}{\ls{\Gamma}_n(u)}\left(M_n(u,u)-aM_n(au,u)+\int_{au}^u\pp{M_n}{v}(u',u)du'\right)\nonumber\\
                           &\quad-M_n(u,u)+\int_u^v\pp{M_n}{u}(u,v')dv',
\end{align}
where
\begin{align}
  \label{eq:409}
  \ls{\Lambda}_n(u)&=\frac{d\ls{\gamma}_n}{du}(u)\pp{x_n}{u}(au,u)+\ls{\gamma}_n(u)a\ppp{x_n}{u}(au,u)+\ls{\gamma}_n(u)M_n(au,u)
\end{align}
and
\begin{align}
  \label{eq:410}
  \ls{\gamma}_n(u)=\frac{\rs{\Gamma}_n(u)}{\ls{\Gamma}_n(u)}.
\end{align}
Recalling (see \eqref{eq:309})
\begin{align}
  \label{eq:411}
    \ls{\gamma}_n(u)=1+\Landau(u)
\end{align}
and using \eqref{eq:397}, \eqref{eq:398} together with \eqref{eq:352}, \eqref{eq:404} we obtain
\begin{align}
  \label{eq:412}
  |\Delta_n\ls{\Lambda}(u)|\leq a\sup_{T_\varepsilon}\bigg|\Delta_n\ppp{x}{u}\bigg|+Cu\Big(\|\Delta_n\ls{\alpha}_{+}\|_1+\|\Delta_n\rs{\beta}_+\|_2+\|\Delta_nx\|_0\Big).
\end{align}
Using this together with \eqref{eq:397}, \eqref{eq:398} and the analogous estimates for $\Delta_n\rs{\Gamma}$ and its derivative, together with \eqref{eq:404}, \eqref{eq:405} we obtain
\begin{align}
  \label{eq:413}
  \bigg|\Delta_{n+1}\ppp{x}{u}\bigg|\leq a\sup_{T_\varepsilon}\bigg|\Delta_n\ppp{x}{u}\bigg|+Cv\Big(\|\Delta_n\ls{\alpha}_{+}\|_1+\|\Delta_n\rs{\beta}_+\|_2+\|\Delta_nx\|_0\Big).
\end{align}
Analogously, but using \eqref{eq:296}, \eqref{eq:303} and the second of \eqref{eq:297} as a starting point, we obtain
\begin{align}
  \label{eq:414}
  \bigg|\Delta_{n+1}\ppp{x}{v}\bigg|\leq a\sup_{T_\varepsilon}\bigg|\Delta_n\ppp{x}{v}\bigg|+Cv\Big(\|\Delta_n\ls{\alpha}_{+}\|_1+\|\Delta_n\rs{\beta}_+\|_2+\|\Delta_nx\|_0\Big).
\end{align}
In view of \eqref{eq:407}, \eqref{eq:413}, \eqref{eq:414}, choosing $\varepsilon$ sufficiently small, we have
\begin{align}
  \label{eq:415}
  \|\Delta_{n+1}x\|_0\leq k\|\Delta_nx\|_0+C\varepsilon\Big(\|\Delta_n\ls{\alpha}_{+}\|_1+\|\Delta_n\rs{\beta}_+\|_2\Big),
\end{align}
where $k$ is a constant satisfying
\begin{align}
  \label{eq:416}
  0<k<1.
\end{align}

We turn to estimate $\Delta_{n+1}\frac{d^2\ls{\alpha}_+}{du^2}$. We use \eqref{eq:332}:
\begin{align}
  \label{eq:417}
    \frac{d^2\ls{\alpha}_{+,n+1}}{du}(u)&=\ls{F}(\ls{\beta}_{+n}(u),\ls{\alpha}_{-n}(u),\ls{\beta}_{-n}(u))\frac{d^2\ls{\beta}_{+n}}{du^2}(u)\nonumber\\
&\quad +\ls{M}_1(\ls{\beta}_{+n}(u),\ls{\alpha}_{-n}(u),\ls{\beta}_{-n}(u))\frac{d^2\ls{\alpha}_{-n}}{du^2}(u)\nonumber\\
                                      &\quad+\ls{M}_2(\ls{\beta}_{+n}(u),\ls{\alpha}_{-n}(u),\ls{\beta}_{-n}(u))\frac{d^2\ls{\beta}_{-n}}{du^2}(u)\nonumber\\
                                      &\quad +\ls{G}\bigg(\ls{\beta}_{+n}(u),\ls{\alpha}_{-n}(u),\ls{\beta}_{-n}(u),\frac{d\ls{\beta}_{+n}}{du}(u),\frac{d\ls{\alpha}_{-n}}{du}(u),\frac{d\ls{\beta}_{-n}}{du}(u)\bigg)
\end{align}
and recall that $\ls{F}$, $\ls{M}_1$, $\ls{M}_2$, $\ls{G}$ are smooth functions of their arguments. From \eqref{eq:417} we deduce
\begin{align}
  \label{eq:418}
  \bigg|\Delta_{n+1}\frac{d^2\ls{\alpha}_+}{du^2}\bigg|&\leq (\ls{F}_0+Cu)\bigg|\Delta_{n}\frac{d^2\ls{\beta}_+}{du^2}\bigg|\nonumber\\
                                                       &\quad+C\bigg(|\Delta_n\ls{\beta}_+|+|\Delta_n\ls{\alpha}_-|+|\Delta_n\ls{\beta}_-|\nonumber\\
  &\qquad\qquad+\bigg|\Delta_n\frac{d\ls{\beta}_+}{du}\bigg|+\bigg|\Delta_n\frac{d\ls{\alpha}_-}{du}\bigg|+\bigg|\Delta_n\frac{d\ls{\beta}_-}{du}\bigg|\nonumber\\
  &\qquad\qquad\qquad\qquad\qquad+\bigg|\Delta_n\frac{d^2\ls{\alpha}_-}{du^2}\bigg|+\bigg|\Delta_n\frac{d^2\ls{\beta}_-}{du^2}\bigg|\bigg).
\end{align}
Using \eqref{eq:349}, \eqref{eq:383}, \eqref{eq:384}, \eqref{eq:385} we obtain
\begin{align}
  \label{eq:419}
  \bigg|\Delta_{n+1}\frac{d^2\ls{\alpha}_+}{du^2}(u)\bigg|\leq (\ls{F}_0+Cu)\bigg|\Delta_{n}\frac{d^2\ls{\beta}_+}{du^2}(u)\bigg|+C\bigg(u\|\Delta_n\ls{\alpha}_+\|_1+\|\Delta_nx\|_0\bigg).
\end{align}
Analogously we obtain
\begin{align}
  \label{eq:420}
  \bigg|\Delta_{n+1}\frac{d^2\rs{\beta}_+}{dv^2}(v)\bigg|\leq (\rs{F}_0+Cv)\bigg|\Delta_{n}\frac{d^2\rs{\alpha}_+}{dv^2}(v)\bigg|+C\bigg(v\|\Delta_n\rs{\beta}_+\|_1+\|\Delta_nx\|_0\bigg).
\end{align}
Taking the supremum of \eqref{eq:419} in $[0,a\varepsilon]$ and of \eqref{eq:420} in $[0,\varepsilon]$ and using the third of \eqref{eq:348} and the third of \eqref{eq:349}, we obtain
\begin{align}
  \label{eq:421}
  \|\Delta_{n+1}\ls{\alpha}_+\|_1&\leq (\ls{F}_0+C\varepsilon)\|\Delta_n\rs{\beta}_+\|_2+C\bigg(\varepsilon\|\Delta_n\ls{\alpha}_+\|_1+\|\Delta_nx\|_0\bigg),\\
  \|\Delta_{n+1}\rs{\beta}_+\|_2&\leq (a^2\rs{F}_0+C\varepsilon)\|\Delta_n\ls{\alpha}_+\|_1+C\bigg(\varepsilon\|\Delta_n\rs{\beta}_+\|_2+\|\Delta_nx\|_0\bigg).\label{eq:422}
\end{align}
In obvious notation, the estimates \eqref{eq:415}, \eqref{eq:421}, \eqref{eq:422} can be written as
\begin{align}
  \label{eq:423}
  \begin{pmatrix}
\|\Delta_{n+1}\ls{\alpha}_+\|_1\\\|\Delta_{n+1}\rs{\beta}_+\|_2\\    \|\Delta_{n+1}x\|_0
  \end{pmatrix}\leq A  \begin{pmatrix}
\|\Delta_{n}\ls{\alpha}_+\|_1\\\|\Delta_{n}\rs{\beta}_+\|_2\\    \|\Delta_{n}x\|_0
  \end{pmatrix},
\end{align}
where
\begin{align}
  \label{eq:424}
  A=
  \begin{pmatrix}
        C\varepsilon & \ls{F}_0+C\varepsilon & C\\
        a^2\rs{F}_0+C\varepsilon & C\varepsilon & C\\
        C\varepsilon & C\varepsilon & k
  \end{pmatrix}.
\end{align}
The eigenvalues $\lambda$ of $A$ are the roots of the polynomial
\begin{align}
  \label{eq:425}
  (k-\lambda)\Big(\lambda^2-a^2\ls{F}_0\rs{F}_0\Big)+\varepsilon p(\lambda),
\end{align}
where $p(\lambda)$ is a first order polynomial in $\lambda$ with bounded coefficients. Taking into account that $\ls{F}_0\rs{F}_0=a^2$, we see that for $\varepsilon$ sufficiently small, the eigenvalues of $A$, in absolute value, are smaller than one. It follows that the sequence
\begin{align}
  \label{eq:426}
      (\ls{\alpha}_{+n},\rs{\beta}_{+n},x_n);n=0,1,2,\ldots)
\end{align}
converges in $B_A\times B_B\times B_{N_0}$.
\end{proof}

\subsection{Proof of the Existence Theorem}
The two propositions above show that the sequence $(\ls{\alpha}_{+n},\rs{\beta}_{+n},x_n);n=0,1,2,\ldots)$ converges uniformly in $[0,a\varepsilon]\times[0,\varepsilon]\times T_\varepsilon$ to $(\ls{\alpha}_+,\rs{\beta}_+,x)\in B_A\times B_B\times B_{N_0}$. From $\alpha_n(u,v)=\ls{\alpha}_{+n}(u)$, $\beta_n(u,v)=\rs{\beta}_{+n}(v)$ it follows that $\alpha_n$, $\beta_n$ converge uniformly in $T_\varepsilon$ and the limits $\alpha$, $\beta$ satisfy
\begin{align}
  \label{eq:427}
  \pp{\alpha}{v}=0=\pp{\beta}{u}.
\end{align}
The convergence of $\alpha_n$, $\beta_n$ implies the convergence of $\cin_n$, $\cout_n$ to $\cin$, $\cout$, given by\footnote{We recall that in the proofs of the above propositions, the notation
  \begin{align}
    \label{eq:428}
    \cin_n(u,v)=\cin(\alpha_n(u,v),\beta_n(u,v))
  \end{align}
was used. The notation $\cin(u,v)$, $\cout(u,v)$ (without an index and the arguments are $u$, $v$) is introduced as the limits of $\cin_n(u,v)$, $\cout_n(u,v)$.}
\begin{align}
  \label{eq:429}
  \cin(u,v)=\cin(\alpha(u,v),\beta(u,v)),\qquad \cout(u,v)=\cout(\alpha(u,v),\beta(u,v)),
\end{align}
which in turn implies the convergence of $\mu_n$, $\nu_n$ to $\mu$, $\nu$, given by
\begin{align}
  \label{eq:430}
  \mu=\frac{1}{\cout-\cin}\frac{\cout}{\cin}\pp{\cin}{v},\qquad \nu=\frac{1}{\cout-\cin}\frac{\cin}{\cout}\pp{\cout}{u}.
\end{align}
This together with the convergence of $x_n$ to $x$ implies, in view of \eqref{eq:403}, the convergence of $M_n$ to $M$ given by
\begin{align}
  \label{eq:431}
  M=\mu\pp{x}{u}+\nu\pp{x}{v}.
\end{align}
From \eqref{eq:406} we see that the limits $x$ and $M$ satisfy
\begin{align}
  \label{eq:432}
  \pppp{x}{u}{v}(u,v)=M(u,v).
\end{align}

The convergence of $\alpha_n$, $\beta_n$ to $\alpha$, $\beta$ implies the convergence of $g_n$, $h_n$ to $g$, $h$ (see \eqref{eq:197}), given by
\begin{align}
  \label{eq:433}
  g(u,v)=\frac{1}{\cin(u,v)},\qquad h(u,v)=\frac{1}{\cout(u,v)},
\end{align}
which, together with the convergence of $x_n$ to $x$, in view of \eqref{eq:360}, implies the convergence of $\phi_n$, $\psi_n$ to $\phi$, $\psi$ given by
\begin{align}
  \label{eq:434}
  \phi(u,v)&=\frac{1}{\cin(u,v)}\pp{x}{u}(u,v),\\
  \label{eq:435}
  \psi(u,v)&=\frac{1}{\cout(u,v)}\pp{x}{v}(u,v).
\end{align}

From \eqref{eq:194} we have that $t_n$ converges to $t$ given by
\begin{align}
  \label{eq:436}
  t(u,v)=\int_0^u\left(\phi+\psi\right)(u',u')du'+\int_u^v\psi(u,v')dv'.
\end{align}
Hence,
\begin{align}
  \label{eq:437}
  \pp{t}{v}(u,v)=\psi(u,v)=\frac{1}{\cout(u,v)}\pp{x}{v}(u,v)
\end{align}
and 
\begin{align}
  \label{eq:438}
  \pp{t}{u}(u,v)=\phi(u,u)+\int_u^v\pp{\psi}{u}(u,v')dv'.
\end{align}
It follows by a direct computation that \eqref{eq:432} (using also \eqref{eq:430}, \eqref{eq:431}) is equivalent to
\begin{align}
  \label{eq:439}
  \pp{\psi}{u}(u,v)=\pp{\phi}{v}(u,v).
\end{align}
Using this in \eqref{eq:438} we obtain
\begin{align}
  \label{eq:440}
  \pp{t}{u}(u,v)&=\phi(u,u)+\int_u^v\pp{\phi}{v}(u,v')dv'\nonumber\\
                &=\phi(u,v)\nonumber\\
                &=\frac{1}{\cin(u,v)}\pp{x}{u}(u,v).
\end{align}
Equations \eqref{eq:437}, \eqref{eq:440} are the characteristic equations, i.e.~the functions $x(u,v)$, $t(u,v)$ satisfy the characteristic equations in $T_\varepsilon$.

The convergence of $\alpha_n$, $\beta_n$ to $\alpha$, $\beta$ implies the convergence of the boundary functions $\rs{\alpha}_{+n}$, $\ls{\beta}_{+n}$ to $\rs{\alpha}_+$, $\ls{\beta}_+$, given by
\begin{align}
  \label{eq:441}
  \rs{\alpha}_+(v)=\alpha(av,v),\qquad \ls{\beta}_+(u)=\beta(u,u).
\end{align}
The convergence of $t_n$, $x_n$ to $t$, $x$ implies the convergence of the boundary functions $\is{t}_{+n}$, $\is{x}_{+n}$, to $\is{t}_+$, $\is{x}_+$, $i=1,2$ given by
\begin{alignat}{3}
  \label{eq:442}
  \ls{t}_+(u)&=t(u,u),&\qquad \rs{t}_+(v)&=t(av,v),\\
  \ls{x}_+(u)&=x(u,u),&\qquad \rs{x}_+(v)&=x(av,v).\label{eq:443}
\end{alignat}
This in turn implies the convergence of $\is{\alpha}_{-n}$, $\is{\beta}_{-n}$ to $\is{\alpha}_-$, $\is{\beta}_-$, $i=1,2$ (see \eqref{eq:258} and the corresponding expressions for $\ls{\beta}_-$ as well as \eqref{eq:264} and the corresponding expression for $\rs{\beta}_-$), given by
\begin{alignat}{3}
  \label{eq:444}
  \ls{\alpha}_-(u)&=\ls{\alpha}^\ast(\ls{t}_+(u),\ls{x}_+(u)),&\qquad  \rs{\alpha}_-(v)&=\rs{\alpha}^\ast(\rs{t}_+(v),\rs{x}_+(v)),\\
  \ls{\beta}_-(u)&=\ls{\beta}^\ast(\ls{t}_+(u),\ls{x}_+(u)),&\qquad  \rs{\beta}_-(v)&=\rs{\beta}^\ast(\rs{t}_+(v),\rs{x}_+(v)).\label{eq:445}
\end{alignat}
The convergence of the boundary functions then implies the convergence of $\is{V}_n$ to $\is{V}$, $i=1,2$, given by
\begin{align}
  \label{eq:446}
  \ls{V}(u)=\frac{\jump{\ls{\rho}(u)\ls{w}(u)}}{\jump{\ls{\rho}(u)}},\qquad\rs{V}(v)=\frac{\jump{\rs{\rho}(v)\rs{w}(v)}}{\jump{\rs{\rho}(v)}},
\end{align}
where
\begin{alignat}{3}
  \label{eq:447}
  \ls{\rho}_+(u)&=\rho(\ls{\alpha}_+(u),\ls{\beta}_+(u)),&\qquad\ls{\rho}_-(u)&=\rho(\ls{\alpha}_-(u),\ls{\beta}_-(u)),\\
  \rs{\rho}_+(v)&=\rho(\rs{\alpha}_+(v),\rs{\beta}_+(v)),&\qquad\rs{\rho}_-(v)&=\rho(\rs{\alpha}_-(v),\rs{\beta}_-(v))\label{eq:448}
\end{alignat}
and the same expressions with $w$ in the role of $\rho$. This implies the convergence of $\is{\Gamma}_n$ to $\is{\Gamma}$, $i=1,2$, given by
\begin{align}
  \label{eq:449}
    \ls{\Gamma}=\frac{\ls{\cout}_+}{\ls{\cin}_+}\frac{\ls{V}-\ls{\cin}_+}{\ls{\cout}_+-\ls{V}},\qquad\rs{\Gamma}=a\frac{\rs{\cout}_+}{\rs{\cin}_+}\frac{\rs{V}-\rs{\cin}_+}{\rs{\cout}_+-\rs{V}}.
\end{align}
Here the boundary functions $\is{\cout}_+$, $\is{\cin}_+$, $i=1,2$, are given by
\begin{alignat}{3}
  \label{eq:450}
  \ls{\cout}_+(u)&=\cout(u,u),&\qquad \ls{\cin}_+(u)&=\cin(u,u),\\
  \rs{\cout}_+(v)&=\cout(av,v),&\qquad \rs{\cin}_+(v)&=\cin(av,v).\label{eq:451}
\end{alignat}
The convergence of $x_n$, $\is{\Gamma}_n$ and $M_n$ imply the convergence of $\is{\Lambda}_n$ to $\is{\Lambda}$ (see \eqref{eq:295}, \eqref{eq:296}), given by
\begin{align}
  \ls{\Lambda}(u)&=\frac{d\ls{\gamma}}{du}(u)\pp{x}{u}(au,u)+\ls{\gamma}(u)a\ppp{x}{u}(au,u)+\ls{\gamma}(u)M(au,u)\nonumber\\
  &=\frac{d}{du}\left(\ls{\gamma}(u)\pp{x}{u}(au,u)\right),\label{eq:452}\\
  \label{eq:453}
  \rs{\Lambda}(v)&=\frac{d\rs{\gamma}}{dv}(v)\pp{x}{v}(av,av)+\rs{\gamma}(v)a\ppp{x}{v}(av,av)+\rs{\gamma}(v)aM(av,av)\nonumber\\
                 &=\frac{d}{dv}\left(\rs{\gamma}(v)\pp{x}{v}(av,av)\right),
\end{align}
where
\begin{align}
  \label{eq:454}
    \ls{\gamma}(u)=\frac{\rs{\Gamma}(u)}{\ls{\Gamma}(u)},\qquad\rs{\gamma}(v)=\frac{\rs{\Gamma}(v)}{\ls{\Gamma}(av)}.
\end{align}

In order to investigate the limit of the boundary conditions for $x$ along the shocks we look at the limiting form of the equations \eqref{eq:298}, \eqref{eq:299}. Recalling $\pp{x}{u}(0,0)=\frac{1}{\Gamma_0}$ and using $\ls{\gamma}(0)=1$ together with \eqref{eq:432}, \eqref{eq:452}, the limiting equation of \eqref{eq:298} with $u=v$ is
\begin{align}
  \label{eq:455}
  \pp{x}{u}(u,u)=\ls{\gamma}(u)\pp{x}{u}(au,u)+\frac{1}{\ls{\Gamma}(u)}\left(\pp{x}{v}(u,u)-\pp{x}{v}(au,u)\right).
\end{align}
Using $\rs{\gamma}(0)=1$, $\pp{x}{v}(0,0)=1$, the limiting equation of \eqref{eq:299} with $u=av$ is
\begin{align}
  \label{eq:456}
  \pp{x}{v}(av,v)=\rs{\gamma}(v)\pp{x}{v}(av,av)+\rs{\Gamma}(v)\left(\pp{x}{u}(av,v)-\pp{x}{u}(av,av)\right).
\end{align}
We recall that in \eqref{eq:455} $u\in[0,a\varepsilon]$, while in \eqref{eq:456} $v\in[0,\varepsilon]$. Defining
\begin{align}
  \label{eq:457}
  \ls{b}(u)&\defeq \pp{x}{v}(u,u)-\pp{x}{u}(u,u)\ls{\Gamma}(u),\\
  \rs{b}(v)&\defeq \pp{x}{v}(av,v)-\pp{x}{u}(av,v)\rs{\Gamma}(v)\label{eq:458}
\end{align}
and recalling the expressions for $\is{\gamma}$, $i=1,2$, as given in \eqref{eq:454}, we rewrite \eqref{eq:455}, \eqref{eq:456} as
\begin{align}
  \label{eq:459}
  \ls{b}(u)&=\rs{b}(u),\\
  \rs{b}(v)&=\rs{\gamma}(v)\ls{b}(av),\label{eq:460}
\end{align}
respectively. Substituting \eqref{eq:460} into the right hand side of \eqref{eq:459} (recall that \eqref{eq:460} holds for $v\in[0,\varepsilon]$), we obtain
\begin{align}
  \label{eq:461}
  \ls{b}(u)=\rs{\gamma}(u)\ls{b}(au).
\end{align}
In order to shorten the notation we investigate this equation without the stacked indices, hence writing it as
\begin{align}
  \label{eq:462}
  b(u)=\gamma(u)b(au).
\end{align}
From \eqref{eq:457}, using $\pp{x}{v}(0,0)=1$, $\pp{x}{u}(0,0)=\frac{1}{\Gamma_0}=\frac{1}{\ls{\Gamma}(0)}$ we have $b(0)=0$. Taking the derivative of \eqref{eq:462} yields
\begin{align}
  \label{eq:463}
  b'(u)=\gamma'(u)b(au)+\gamma(u)ab'(au).
\end{align}
Integrating from $u=0$ we obtain
\begin{align}
  \label{eq:464}
  b(u)=\int_0^u\Big(\gamma'(\tilde{u})b(a\tilde{u})+\gamma(\tilde{u})ab'(a\tilde{u})\Big)d\tilde{u}.
\end{align}
Using the notation $\|f\|=\sup_{u\in[0,a\varepsilon]}|f(u)|$, \eqref{eq:464} implies
\begin{align}
  \label{eq:465}
  \|b\|\leq C\varepsilon\|b\|+\varepsilon(1+C\varepsilon)a^2\|b'\|,
\end{align}
where we made use of $\gamma(u)=1+\Landau(u)$ (see \eqref{eq:310}). Choosing $\varepsilon$ sufficiently small, this implies
\begin{align}
  \label{eq:466}
  \|b\|\leq C\varepsilon\|b'\|.
\end{align}
From \eqref{eq:463} we deduce
\begin{align}
  \label{eq:467}
  \|b'\|&\leq C\|b\|+(1+C\varepsilon)a\|b'\|\nonumber\\
               &\leq (a+C\varepsilon)\|b'\|,
\end{align}
where for the second inequality we used \eqref{eq:466}. Choosing $\varepsilon$ sufficiently small, such that $a+C\varepsilon<1$, we obtain $b'(u)=0$ for $u\in[0,a\varepsilon]$. Together with $b(0)=0$ this implies $b(u)=0$ for $u\in[0,a\varepsilon]$. We have shown that our limit satisfies (putting back the stacked index)
\begin{align}
  \label{eq:468}
  \ls{b}(u)\equiv 0.
\end{align}
This implies, through \eqref{eq:460}, that
\begin{align}
  \label{eq:469}
  \rs{b}(v)\equiv 0.
\end{align}
Hence, the system of equations
\begin{align}
  \label{eq:470}
  \pp{x}{v}(u,u)&=\pp{x}{u}(u,u)\ls{\Gamma}(u),\\
  \pp{x}{v}(av,v)&=\pp{x}{u}(av,v)\rs{\Gamma}(v)\label{eq:471}
\end{align}
hold in $T_\varepsilon$. Using the characteristic equations, these are equivalent to (see \eqref{eq:31}, \eqref{eq:33}, \eqref{eq:34})
\begin{align}
  \label{eq:472}
    \ls{V}d\ls{t}=d\ls{x},\qquad \rs{V}d\rs{t}=d\rs{x},
\end{align}
i.e.~the boundary conditions on the shocks are satisfied in the limit.

Equations \eqref{eq:319}, \eqref{eq:320} are satisfied in the limit, i.e.
\begin{align}
  \label{eq:473}
    \ls{\alpha}_{+}(u)&=\ls{H}(\ls{\beta}_{+}(u),\ls{\alpha}_{-}(u),\ls{\beta}_{-}(u)),\\
  \rs{\beta}_{+}(v)&=\rs{H}(\rs{\alpha}_{+}(v),\rs{\alpha}_{-}(v),\rs{\beta}_{-}(v)).\label{eq:474}
\end{align}
This implies (see \eqref{eq:65}, \eqref{eq:66})
\begin{align}
  \label{eq:475}
  J\Big(\ls{\alpha}_+(u),\ls{\beta}_+(u),\ls{\alpha}_-(u),\ls{\beta}_-(u)\Big)&=0,\\
  J\Big(\rs{\alpha}_+(v),\rs{\beta}_+(v),\rs{\alpha}_-(v),\rs{\beta}_-(v)\Big)&=0.\label{eq:476}
\end{align}
Together with \eqref{eq:446} we see that the jump conditions along the shocks are satisfied in the limit.

We recall that the determinism conditions are satisfied at the interaction point by assumption on the data (see \eqref{eq:21}, \eqref{eq:23}):
\begin{align}
  \label{eq:477}
      -\eta_0<&\ls{V}_0<(\ls{\cin}_0^\ast)_0,\\
  (\rs{\cout}_0^\ast)_0<&\rs{V}_0<\eta_0,\label{eq:478}
\end{align}
where
\begin{align}
  \label{eq:479}
  (\ls{\cin}_0^\ast)_0&=\ls{\cin}_-(0)=\cin(\ls{\alpha}_-(0),\ls{\beta}_-(0)),\\
  (\rs{\cout}_0^\ast)_0&=\rs{\cout}_-(0)=\cout(\rs{\alpha}_-(0),\rs{\beta}_-(0)).\label{eq:480}
\end{align}
Therefore, choosing $\varepsilon$ sufficiently small, the determinism conditions are satisfied for $v\in[0,\varepsilon]$, $u\in[0,a\varepsilon]$, i.e.
\begin{align}
  \label{eq:481}
  \ls{\cin}_+(u)&<\ls{V}(u)<\ls{\cin}_-(u),\\
  \rs{\cout}_-(v)&<\rs{V}(v)<\rs{\cout}_+(u).\label{eq:482}
\end{align}
In view of \eqref{eq:427}, \eqref{eq:437}, \eqref{eq:440}, \eqref{eq:446}, \eqref{eq:475}, \eqref{eq:476}, \eqref{eq:481}, \eqref{eq:482} we have proven the existence of a twice differentiable solution $(\alpha,\beta,t,x)$ to the shock interaction problem as stated in subsection \ref{shock_interaction_problem}.

The determinant of the Jacobian $\frac{\partial(t,x)}{\partial(u,v)}$ is given by
\begin{align}
  \label{eq:483}
  \begin{vmatrix}
    \pp{t}{u} & \pp{t}{v}\\
    \pp{x}{u} & \pp{x}{v}
  \end{vmatrix}=(\cout-\cin)\pp{t}{u}\pp{t}{v}=2\eta\pp{t}{u}\pp{t}{v}=-\frac{2}{\eta_0\Gamma_0}+\Landau(v),
\end{align}
where for the last equality we used \eqref{eq:209}, \eqref{eq:213}. Therefore, by choosing $\varepsilon$ sufficiently small, the functions $\alpha$, $\beta$ are twice differentiable functions of $(t,x)$ on the image of $T_\varepsilon$ by the map $(u,v)\mapsto(t(u,v),x(u,v))$.

This concludes the proof of theorem \ref{existence_theorem} from page \pageref{existence_theorem}.

\subsection{Asymptotic Form}
By straightforward expansions of the functions $(\alpha,\beta,t,x)$ at the interaction point in the state behind, one can show that any twice differentiable solution of the interaction problem is of the same asymptotic form as the solution constructed in the existence proof:
\begin{align*}
  \alpha(u,v)&=\beta_0+\alpha_0'u+\Landau(v^2),\\
  \beta(u,v)&=\beta_0+\beta_0'v+\Landau(v^2),\\
  t(u,v)&=\frac{1}{\eta_0}\left(v-\frac{u}{\Gamma_0}\right)+\Landau(v^2),\\
  x(u,v)&=\frac{1}{\Gamma_0}u+v+\Landau(v^2).
\end{align*}

\subsection{Uniqueness}
We have the following uniqueness result:
\begin{theorem}[Uniqueness]
  Let $(\alpha_1,\beta_1,t_1,x_1)$, $(\alpha_2,\beta_2,t_2,x_2)$, both in $C^2(T_\varepsilon)$, be two solutions of the interaction problem as stated in \ref{shock_interaction_problem} corresponding to the same future developments of the data. Then, for $\varepsilon$ sufficiently small, the two solutions coincide.
\end{theorem}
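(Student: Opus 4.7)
The plan is to mimic the convergence argument from the previous subsection almost verbatim, replacing the difference $\Delta_n f = f_n - f_{n-1}$ between successive iterates by the difference $\Delta f = f_1 - f_2$ between two exact solutions. The key observation is that both solutions satisfy exactly the same closed system: the characteristic equations, the boundary conditions along the shocks, the jump conditions, and the same data at the interaction point. Hence for each of the quantities $\alpha$, $\beta$, $t$, $x$, $\is{V}$, $\is{\Gamma}$, $\is{\alpha}_\pm$, $\is{\beta}_\pm$, $\cin$, $\cout$, $\mu$, $\nu$, $M$, $\is{\Lambda}$, $\phi$, $\psi$, the expressions producing one solution from its own boundary traces and the prescribed functions $\is{\alpha}^\ast$, $\is{\beta}^\ast$ are identical for solution $1$ and solution $2$. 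In particular, the identities behind the iteration scheme (equations \eqref{eq:141}, \eqref{eq:150}, \eqref{eq:151}) hold for each solution separately, with the roles of the $n$th and $(n+1)$th iterate played by the solution itself.

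First, I would verify the base-point vanishing. By the asymptotic form subsection, both solutions share the same values and first derivatives at the interaction point, so $\Delta\alpha$, $\Delta\beta$, $\Delta t$, $\Delta x$ and their first derivatives all vanish at $(u,v)=(0,0)$, and in particular $\Delta \ls{\alpha}_+(0)=\Delta \rs{\beta}_+(0)=0$, $\frac{d\Delta\ls{\alpha}_+}{du}(0)=\frac{d\Delta\rs{\beta}_+}{dv}(0)=0$, $\Delta x(0,0)=\pp{\Delta x}{u}(0,0)=\pp{\Delta x}{v}(0,0)=0$. This gives the analogues of the vanishing estimates \eqref{eq:346}--\eqref{eq:354} that were used in the convergence proof.

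Next, I would propagate these bounds through the same chain of estimates as in the convergence proof: first for $\Delta g$, $\Delta h$, then for $\Delta \phi$, $\Delta \psi$, then for $\Delta t$ and its derivatives, then for $\Delta \is{t}_+$, $\Delta \is{x}_+$, $\Delta \is{\alpha}_\pm$, $\Delta \is{\beta}_\pm$, $\Delta \is{V}$, $\Delta \cin$, $\Delta \cout$, $\Delta \is{\Gamma}$, $\Delta \mu$, $\Delta \nu$, $\Delta M$, and finally for the second derivatives of $\Delta x$, $\Delta \ls{\alpha}_+$, $\Delta \rs{\beta}_+$. Since the expressions are identical in form to those used in the convergence proof and depend smoothly on the inputs, the same smallness estimates apply. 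The outcome is the component-wise inequality
\begin{align*}
  \begin{pmatrix}
    \|\Delta\ls{\alpha}_+\|_1\\[2pt] \|\Delta\rs{\beta}_+\|_2\\[2pt] \|\Delta x\|_0
  \end{pmatrix}
  \leq A
  \begin{pmatrix}
    \|\Delta\ls{\alpha}_+\|_1\\[2pt] \|\Delta\rs{\beta}_+\|_2\\[2pt] \|\Delta x\|_0
  \end{pmatrix},
\end{align*}
with the same matrix $A$ (with non-negative entries) as in \eqref{eq:424}. As shown there, for $\varepsilon$ sufficiently small all eigenvalues of $A$ are strictly less than $1$ in absolute value because $\ls{F}_0\rs{F}_0=a^2<1$. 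Iterating the inequality yields $\|\Delta\ls{\alpha}_+\|_1\leq (A^n)_{1j}\|\Delta\|_j$ and analogously for the other components; letting $n\to\infty$ forces all three norms to vanish. Consequently $\ls{\alpha}_+$, $\rs{\beta}_+$, $x$ agree on their domains, and the dependent quantities $\alpha$, $\beta$, $t$ (recovered by \eqref{eq:126}, \eqref{eq:127}) agree as well.

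The main obstacle is purely bookkeeping: one must check that every estimate in the convergence proof survives the replacement $(\Delta_{n+1},\Delta_n)\mapsto(\Delta,\Delta)$, which is true because those estimates rely only on smoothness of the defining functions $\cin$, $\cout$, $\rho$, $w$, $\is{H}$, $\is{\alpha}^\ast$, $\is{\beta}^\ast$ together with the uniform bounds of the previous subsection that now hold for both solutions simultaneously (each solution lies in $B_A\times B_B\times B_{N_0}$). No new analytical ingredient is required beyond the contraction argument encoded in the spectral radius of $A$.
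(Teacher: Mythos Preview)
Your proposal is correct and matches the paper's approach exactly: the paper's entire proof is the sentence ``Using similar estimates as in the convergence proof above, the proof is straightforward,'' and you have spelled out precisely what that means---replace $\Delta_n$ by the difference of two exact solutions, rerun the estimates of the convergence subsection, and use that the matrix $A$ has spectral radius strictly less than $1$ to conclude the difference vanishes. The one small point worth making explicit is that an arbitrary $C^2$ solution is not a priori known to lie in $B_A\times B_B\times B_{N_0}$; this follows from the asymptotic-form subsection together with shrinking $\varepsilon$, which you invoke implicitly.
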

Using similar estimates as in the convergence proof above, the proof is straightforward.

\section{Higher Regularity}
\begin{theorem}[Higher Regularity]
  For $\varepsilon$ sufficiently small, the established solution $\alpha$, $\beta$, $t$, $x$ of the interaction problem is infinitely differentiable.
\end{theorem}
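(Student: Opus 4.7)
The plan is to prove by induction on $k \geq 2$ that the solution $(\alpha, \beta, t, x)$ established in Theorem \ref{existence_theorem} is of class $C^{k}$ on $T_{\varepsilon}$ for $\varepsilon$ sufficiently small (possibly depending on $k$). The base case $k = 2$ is Theorem \ref{existence_theorem}. The strategy is to rerun the iteration scheme of the existence proof, but in closed balls of higher-regularity function spaces
\begin{align*}
B_{N_0}^{(k)} &= \{f \in C^{k+1}(T_\varepsilon) : \text{prescribed } (k+1)\text{-jet at } (0,0),\ \|f\|_0^{(k)} \leq N_0\},\\
B_A^{(k)} &= \{f \in C^{k+1}[0, a\varepsilon] : \text{prescribed } (k+1)\text{-jet at } 0,\ \|f\|_1^{(k)} \leq A\},\\
B_B^{(k)} &= \{f \in C^{k+1}[0, \varepsilon] : \text{prescribed } (k+1)\text{-jet at } 0,\ \|f\|_2^{(k)} \leq B\},
\end{align*}
where $\|\cdot\|_0^{(k)}, \|\cdot\|_1^{(k)}, \|\cdot\|_2^{(k)}$ are the suprema of all $(k+1)$-st order derivatives on the respective domains.

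The prescribed $(k+1)$-jets at the origin are determined inductively by differentiating the jump conditions \eqref{eq:67}--\eqref{eq:68}, the boundary conditions \eqref{eq:34}, the relations $\ls{\beta}_+(u) = \rs{\beta}_+(u)$ and $\rs{\alpha}_+(v) = \ls{\alpha}_+(av)$, and the characteristic equations \eqref{eq:27} at the interaction point. At each order, the resulting linear system for $\frac{d^{k+1}\ls{\alpha}_+}{du^{k+1}}(0)$ and $\frac{d^{k+1}\rs{\beta}_+}{dv^{k+1}}(0)$ is analogous to \eqref{eq:88}--\eqref{eq:93}, with determinant $1 - a^{k+1} \ls{F}_0 \rs{F}_0 = 1 - a^{k+3} > 0$, so all jets are uniquely determined by lower-order data.

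For the inductive step I would differentiate the integral equations \eqref{eq:141}, \eqref{eq:298}, \eqref{eq:299}, \eqref{eq:332} the appropriate number of extra times. The crucial leading terms in the iteration for the $(k+1)$-st derivatives are $a^{k}\ls{\gamma}_n(u)\frac{\partial^{k+1} x_n}{\partial u^{k+1}}(au,u)$ in the $x$-iteration (with an analogous $\rs{\gamma}$-term), and $\ls{F}_0\frac{d^{k+1}\rs{\beta}_+}{du^{k+1}}$, $a^{k+1}\rs{F}_0 \frac{d^{k+1}\ls{\alpha}_+}{dv^{k+1}}$ in the shock iteration. Since $a < 1$ and $\ls{F}_0 \rs{F}_0 = a^2$, the contraction matrix analogous to \eqref{eq:424} has spectral radius bounded by $a^{k+3} + C_k\varepsilon$, strictly less than one for $\varepsilon$ small. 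The inductive step and convergence argument then go through exactly as in the $C^2$ case, and by the uniqueness theorem the limit coincides with the established solution, which is therefore $C^{k+1}$.

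The main obstacle is bookkeeping rather than structural: each additional derivative of the iteration equations produces many lower-order terms via Leibniz and chain rules applied to $\mu_n, \nu_n, M_n, \ls{H}, \rs{H}$, all of which must be bounded in the appropriate $\Landau$-style sense. These bounds are routine given the induction hypothesis, since each such term is a smooth function of derivatives of $\ls{\alpha}_+, \rs{\beta}_+, x$ of order at most $k$, all of which are already controlled. The reason the argument does not degenerate as $k$ grows is precisely the persistence of the contraction: the exponent of $a$ in the dominant term strictly increases with each order of differentiation, so the leading contraction factor $a^{k+3}$ actually improves with $k$.
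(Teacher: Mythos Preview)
Your approach is genuinely different from the paper's, and while it can be made to work, there is a real gap in the argument as stated.

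\textbf{What the paper does.} The paper does not re-iterate. It works directly with the established $C^2$ solution and shows, by induction on the order $m$, that all $m$-th order derivatives of $\ls{\alpha}_+$, $\rs{\beta}_+$, $x$ are bounded on the \emph{fixed} domain $T_\varepsilon$. Differentiating \eqref{eq:67}, \eqref{eq:68}, \eqref{eq:96}, \eqref{eq:99} $m$ times, one obtains closed inequalities for the suprema of the top-order derivatives in terms of themselves plus lower-order terms (denoted $\lot$). The crucial observation is that the coefficients multiplying the top-order quantities --- called $\overline{C}$ in the paper --- are independent of $m$, because they arise only from the undifferentiated factors $\ls{\gamma}$, $\rs{\gamma}$, $1/\ls{\Gamma}$, $\rs{\Gamma}$, $\ls{F}$, $\rs{F}$ and the fixed domain geometry. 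All $m$-dependence is shunted into the additive constant $C$. Hence the smallness conditions on $\varepsilon$ (e.g.\ $a+\overline{C}\varepsilon<1$) are the same at every order, and one obtains $C^\infty$ on a single $T_\varepsilon$.

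\textbf{The gap in your proposal.} You hedge with ``$\varepsilon$ sufficiently small (possibly depending on $k$)''. If $\varepsilon=\varepsilon_k\to 0$, you only conclude $C^\infty$ at the interaction point, not on any fixed $T_\varepsilon$, so the theorem as stated would not follow. Your final paragraph argues that the leading contraction factor $a^{k+3}$ improves with $k$, but this addresses only the spectral radius of the top-order block; it says nothing about the lower-order Leibniz terms, whose constants $C_k$ grow with $k$. In a re-iteration scheme these constants feed into the choice of ball radii $A^{(k)}, B^{(k)}, N_0^{(k)}$, and in the existence proof $\varepsilon$ \emph{is} chosen small depending on those radii (cf.\ the passage from $(a+Cu)N_0$ to $C+aN_0$ after \eqref{eq:311}). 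To close your argument you would have to verify that every $\varepsilon$-smallness condition in the higher-order iteration involves only order-independent constants --- which is exactly the bookkeeping the paper's direct approach is designed to make transparent. Alternatively, you could accept $\varepsilon_k\to 0$ and add a separate propagation-of-regularity argument along characteristics to extend $C^{k+1}$ from a neighborhood of the origin to all of $T_\varepsilon$; but that step is absent from your proposal.
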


\begin{proof}
We show that all derivatives of the functions $\alpha$, $\beta$, $t$, $x$ are bounded. This can be done by induction, i.e.~once it is assumed that the $m$'th order derivatives are bounded it can be shown that the $m+1$'th order derivatives are bounded. The base case of this induction is given by the solution being in $C^2(T_{\varepsilon})$, i.e.~it is already shown that the solution is two times continuously differentiable. Instead of showing the inductive step $m\mapsto m+1$ we only show that the third order derivatives are bounded, the general inductive step being completely analogous. We restrict to this case in order to simplify and shorten the presentation of the argument. However, it is important to note that the encountered smallness conditions on $\varepsilon$ in the process are independent of the order of derivatives studied, i.e.~also for the general inductive step, no other smallness conditions would be necessary. This will become apparent during the argument to follow.

Through the characteristic equations \eqref{eq:27}, the derivatives of $t$ can be expressed in terms of the derivatives of $x$ and functions of $\alpha$, $\beta$ and derivatives thereof. From this together with
\begin{align}
  \label{eq:484}
  \alpha(u,v)=\ls{\alpha}_+(u),\qquad \beta(u,v)=\rs{\beta}_+(v),
\end{align}
we see that it suffices to establish bounds on the derivatives of the functions $x(u,v)$, $\ls{\alpha}_+(u)$, $\rs{\beta}_+(v)$.

We first consider the function $\ls{\alpha}_+(u)$. We recall \eqref{eq:67}:
\begin{align}
  \label{eq:485}
  \ls{\alpha}_+(u)=\ls{H}(\ls{\beta}_+(u),\ls{\alpha}_-(u),\ls{\beta}_-(u)).
\end{align}
Here $\ls{H}$ is a smooth function of its arguments and
\begin{align}
  \label{eq:486}
  \ls{\alpha}_-(u)=\ls{\alpha}^\ast(\ls{t}_+(u),\ls{x}_+(u)),\qquad\ls{\beta}_-(u)=\ls{\beta}^\ast(\ls{t}_+(u),\ls{x}_+(u)),
\end{align}
where
\begin{align}
  \label{eq:487}
  \ls{t}_+(u)=t(u,u),\qquad \ls{x}_+(u)=x(u,u).
\end{align}
Taking the third derivative of \eqref{eq:485} we obtain
\begin{align}
  \label{eq:488}
  \frac{d^3\ls{\alpha}_+}{du^3}(u)&=\ls{F}\Big(\ls{\beta}_+(u),\ls{\alpha}_-(u),\ls{\beta}_-(u)\Big)\frac{d^3\ls{\beta}_+}{du^3}(u)\nonumber\\
                                  &\quad +\ls{M}_1\Big(\ls{\beta}_+(u),\ls{\alpha}_-(u),\ls{\beta}_-(u)\Big)\frac{d^3\ls{\alpha}_-}{du^3}(u)\nonumber\\
  &\quad +\ls{M}_2\Big(\ls{\beta}_+(u),\ls{\alpha}_-(u),\ls{\beta}_-(u)\Big)\frac{d^3\ls{\beta}_-}{du^3}(u)+\lot
\end{align}
Here and in the following we denote by $\lot$ terms that are of lower order. Since we are studying the third order derivatives this means we denote by $\lot$ terms which can involve the functions $\alpha$, $\beta$, $t$, $x$, derivatives thereof and second derivatives thereof. Terms of lower order are bounded in absolute value. The functions $\ls{F}$, $\ls{M}_1$, $\ls{M}_2$ correspond to the partial derivatives of $\ls{H}$ and are therefore smooth functions of their arguments. They already show up in \eqref{eq:69}.

Taking the third derivative of $\ls{\alpha}_-(u)=\ls{\alpha}^\ast(t_+(u),x_+(u))$, we obtain
\begin{align}
  \label{eq:489}
  \frac{d^3\ls{\alpha}_-}{du^3}(u)=\pp{\ls{\alpha}^\ast}{t}(\ls{t}_+(u),\ls{x}_+(u))\frac{d^3\ls{t}_+}{du^3}(u)+\pp{\ls{\alpha}^\ast}{x}(\ls{t}_+(u),\ls{x}_+(u))\frac{d^3\ls{x}_+}{du^3}(u)+\lot.
\end{align}
The third derivative of $\ls{x}_+(u)=x(u,u)$ is given by
\begin{align}
  \label{eq:490}
  \frac{d^3\ls{x}_+}{du^3}(u)=\frac{\partial^3x}{\partial u^3}(u,u)+3\frac{\partial^3x}{\partial u^2\partial v}(u,u)+3\frac{\partial^3 x}{\partial u\partial v^2}(u,u)+\frac{\partial^3 x}{\partial v^3}(u,u).
\end{align}
The same equation holds with $t$ in the role of $x$. In view of \eqref{eq:102}, the partial derivatives of mixed type can be expressed by lower order derivatives, hence
\begin{align}
  \label{eq:491}
  \frac{d^3\ls{x}_+}{du^3}(u)=\frac{\partial^3x}{\partial u^3}(u,u)+\frac{\partial^3 x}{\partial v^3}(u,u)+\lot.
\end{align}
In view of the characteristic equations \eqref{eq:27} we have
\begin{align}
  \label{eq:492}
  \frac{\partial^3t}{\partial u^3}(u,v)&=\frac{1}{\cin(u,v)}\frac{\partial^3x}{\partial u^3}(u,v)+\lot,\\
  \label{eq:493}
  \frac{\partial^3t}{\partial v^3}(u,v)&=\frac{1}{\cout(u,v)}\frac{\partial^3x}{\partial u^3}(u,v)+\lot,
\end{align}
where we use the short notation
\begin{align}
  \label{eq:494}
 \cin(u,v)=\cin(\alpha(u,v),\beta(u,v)),\qquad \cout(u,v)=\cout(\alpha(u,v),\beta(u,v)).
\end{align}
The partial derivatives of $t(u,v)$ of mixed type can be expressed by partial derivatives of $x(u,v)$ of mixed type. Therefore,
\begin{align}
  \label{eq:495}
    \frac{d^3\ls{t}_+}{du^3}(u)=\frac{1}{\cin(u,u)}\frac{\partial^3x}{\partial u^3}(u,u)+\frac{1}{\cout(u,u)}\frac{\partial^3 x}{\partial v^3}(u,u)+\lot.
\end{align}
Substituting \eqref{eq:491}, \eqref{eq:495} into \eqref{eq:489} we obtain
\begin{align}
  \label{eq:496}
  \bigg|\frac{d^3\ls{\alpha}_-}{du^3}(u)\bigg|,\bigg|\frac{d^3\ls{\beta}_-}{du^3}(u)\bigg|\leq C+\overline{C}_1\sup_{[0,a\varepsilon]}\bigg|\frac{\partial^3x}{\partial u^3}(u,u)\bigg|+\overline{C}_2\sup_{[0,a\varepsilon]}\bigg|\frac{\partial^3x}{\partial v^3}(u,u)\bigg|.
\end{align}
where the statement for the third derivative of $\ls{\beta}_-(u)$ is obtained analogously. Here and in the following we denote by $C$ and $\overline{C}$ (possibly with an index) generic numerical constants whose values may change from line to line, in agreement with standard notation. However, by $\overline{C}$ we denote numerical constants whose values are the same no matter what order of derivative is studied. For example if we would estimate the fourth order derivatives (assuming third order derivatives are bounded) equation \eqref{eq:496} would look the same, just with fourth order derivatives instead of third order. But the constant $C$ would possess a different numerical value while the constants $\overline{C}_1$, $\overline{C}_2$ would possess the same numerical values as in \eqref{eq:496}.

For the function $\ls{F}$ in \eqref{eq:488} we have the estimate
\begin{align}
  \label{eq:497}
\bigg|\ls{F}\Big(\ls{\beta}_+(u),\ls{\alpha}_-(u),\ls{\beta}_-(u)\Big)\bigg|\leq \ls{F}_0+\overline{C}u.
\end{align}
Similar estimates hold for the functions $\ls{M}_1$, $\ls{M}_2$ in \eqref{eq:488}. Using this together with \eqref{eq:496} and $\ls{\beta}_+(u)=\rs{\beta}_+(u)$ (see \eqref{eq:184}) in \eqref{eq:488} we obtain
\begin{align}
  \label{eq:498}
  \bigg|\frac{d^3\ls{\alpha}_+}{du^3}(u)\bigg|&\leq C+(\ls{F}_0+\overline{C}_1u)\bigg|\frac{d^3\rs{\beta}_+}{dv^3}(u)\bigg|\nonumber\\
  &\quad+\overline{C}_2\sup_{[0,a\varepsilon]}\bigg|\frac{\partial^3x}{\partial u^3}(u,u)\bigg|+\overline{C}_3\sup_{[0,a\varepsilon]}\bigg|\frac{\partial^3x}{\partial v^3}(u,u)\bigg|.
\end{align}
Analogously, but using as a starting point equation \eqref{eq:68}:
\begin{align}
  \label{eq:499}
  \rs{\beta}_+(v)&=\rs{H}(\rs{\alpha}_+(v),\rs{\alpha}_-(v),\rs{\beta}_-(v))
\end{align}
and recalling $\rs{\alpha}_+(v)=\ls{\alpha}_+(av)$, we obtain
\begin{align}
  \label{eq:500}
  \bigg|\frac{d^3\rs{\beta}_+}{dv^3}(v)\bigg|&\leq C+(\rs{F}_0+\overline{C}_1v)a^3\bigg|\frac{d^3\ls{\alpha}_+}{du^3}(av)\bigg|\nonumber\\
  &\quad+\overline{C}_2\sup_{[0,\varepsilon]}\bigg|\frac{\partial^3x}{\partial u^3}(av,v)\bigg|+\overline{C}_3\sup_{[0,\varepsilon]}\bigg|\frac{\partial^3x}{\partial v^3}(av,v)\bigg|.
\end{align}
Putting \eqref{eq:500} into \eqref{eq:498} yields
\begin{align}
  \label{eq:501}
  \bigg|\frac{d^3\ls{\alpha}_+}{du^3}(u)\bigg|&\leq C+(\ls{F}_0+\overline{C}_1u)(\rs{F}_0+\overline{C}_2u)a^3\bigg|\frac{d^3\ls{\alpha}_+}{du^3}(au)\bigg|\nonumber\\
  &\quad+\overline{C}_3\sup_{T_\varepsilon}\bigg|\frac{\partial^3x}{\partial u^3}\bigg|+\overline{C}_4\sup_{T_\varepsilon}\bigg|\frac{\partial^3x}{\partial v^3}\bigg|.
\end{align}
Taking the supremum of this equation in $[0,a\varepsilon]$, recalling that $\ls{F}_0\rs{F}_0=a^2<1$ and choosing $\varepsilon$ sufficiently small, we deduce
\begin{align}
  \label{eq:502}
  \sup_{[0,a\varepsilon]}\bigg|\frac{d^3\ls{\alpha}_+}{du^3}\bigg|\leq C+\overline{C}_1\sup_{T_\varepsilon}\bigg|\frac{\partial^3x}{\partial u^3}\bigg|+\overline{C}_2\sup_{T_\varepsilon}\bigg|\frac{\partial^3x}{\partial v^3}\bigg|.
\end{align}
Using this in the right hand side of \eqref{eq:500} and taking the supremum of the resulting equation in $[0,\varepsilon]$, we obtain
\begin{align}
  \label{eq:503}
  \sup_{[0,\varepsilon]}\bigg|\frac{d^3\rs{\beta}_+}{dv^3}\bigg|&\leq C+\overline{C}_1\sup_{T_\varepsilon}\bigg|\frac{\partial^3x}{\partial u^3}\bigg|+\overline{C}_2\sup_{T_\varepsilon}\bigg|\frac{\partial^3x}{\partial v^3}\bigg|.
\end{align}

We now consider the function $x(u,v)$. We recall \eqref{eq:96}:
\begin{align}
  \label{eq:504}
  \pp{x}{u}(u,v)&=\ls{\gamma}(u)\,\pp{x}{u}(au,u)\nonumber\\
  &\quad+\frac{1}{\ls{\Gamma}(u)}\int_{au}^uM(u',u)du'+\int_u^vM(u,v')dv',
\end{align}
where (see \eqref{eq:33}, \eqref{eq:148})
\begin{align}
  \ls{\gamma}(u)&=\frac{\rs{\Gamma}(u)}{\ls{\Gamma}(u)},\label{eq:505}\\
  \ls{\Gamma}&=\frac{\ls{\cout}_{+}}{\ls{\cin}_{+}}\frac{\ls{V}-\ls{\cin}_{+}}{\ls{\cout}_{+}-\ls{V}},\qquad \rs{\Gamma}=a\frac{\rs{\cout}_{+}}{\rs{\cin}_{+}}\frac{\rs{V}-\rs{\cin}_{+}}{\rs{\cout}_{+}-\rs{V}}  \label{eq:506}
\end{align}
and $\is{V}$, $i=1,2$ satisfies
\begin{align}
  \label{eq:507}
  \is{V}=\frac{\jump{\is{\rho}\is{w}}}{\jump{\is{\rho}}},\qquad i=1,2,  
\end{align}
where $\ls{\rho}_+(u)=\rho(\ls{\alpha}_+(u),\ls{\beta}_+(u))$, $\ls{\rho}_-(u)=\rho(\ls{\alpha}_-(u),\ls{\beta}_-(u))$ and analogous for $\ls{w}_\pm(u)$, $\rs{\rho}_\pm(v)$, $\rs{w}_\pm(v)$. We have
\begin{align}
  \label{eq:508}
  \ls{\Gamma}(u),\rs{\Gamma}(v),\frac{d\ls{\Gamma}}{du}(u),\frac{d\rs{\Gamma}}{dv}(v),\frac{d^2\ls{\Gamma}}{du^2}(u),\frac{d^2\rs{\Gamma}}{dv^2}(v)=\lot.
\end{align}

The function $M(u,v)$ depends on first derivatives of $\alpha(u,v)$ with respect to $u$ and first derivatives of $\beta(u,v)$ with respect to $v$. Therefore,
\begin{align}
  \label{eq:509}
  M(u,v),\pp{M}{u}(u,v),\pp{M}{v}(u,v)=\lot.
\end{align}

Using \eqref{eq:508}, \eqref{eq:509} and taking two partial derivatives with respect to $u$ of \eqref{eq:504} we obtain
\begin{align}
  \label{eq:510}
  \frac{\partial^3x}{\partial u^3}(u,v)&=\ls{\gamma}(u)a^2\frac{\partial^3x}{\partial u^3}(au,u)\nonumber\\
  &\quad+\frac{1}{\ls{\Gamma}(u)}\int_{au}^u\ppp{M}{v}(u',u)du'+\int_u^v\ppp{M}{u}(u,v')dv'+\lot.
\end{align}

By taking second order derivatives of \eqref{eq:431}, (see also \eqref{eq:429}, \eqref{eq:430}), we obtain
\begin{align}
  \label{eq:511}
  \bigg|\ppp{M}{v}(u,v)\bigg|&\leq C+\overline{C}_1\bigg|\frac{d^3\rs{\beta}_+}{dv^3}(v)\bigg|+\overline{C}_2\bigg|\frac{\partial^3x}{\partial v^3}(u,v)\bigg|,\\
    \bigg|\ppp{M}{u}(u,v)\bigg|&\leq C+\overline{C}_1\bigg|\frac{d^3\ls{\alpha}_+}{du^3}(u)\bigg|+\overline{C}_2\bigg|\frac{\partial^3x}{\partial u^3}(u,v)\bigg|.\label{eq:512}
\end{align}

Putting these into \eqref{eq:510} and taking $\varepsilon$ sufficiently small, such that $|\ls{\gamma}(u)a|\leq 1$ (recall that $\ls{\gamma}(0)=1$), we obtain
\begin{align}
  \label{eq:513}
  \bigg|\frac{\partial^3x}{\partial u^3}(u,v)\bigg|&\leq C+a\bigg|\frac{\partial^3x}{\partial u^3}(au,u)\bigg|+\overline{C}_1u\sup_{[0,\varepsilon]}\bigg|\frac{d^3\rs{\beta}_+}{dv^3}\bigg|+\overline{C}_2v\sup_{[0,a\varepsilon]}\bigg|\frac{d^3\ls{\alpha}_+}{du^3}\bigg|\nonumber\\
  &\quad +\overline{C}_3\int_{au}^u\bigg|\frac{\partial^3x}{\partial v^3}(u',u)\bigg|du'+\overline{C}_4\int_u^v\bigg|\frac{\partial^3x}{\partial u^3}(u,v')\bigg|dv'.
\end{align}
Analogous to the way we arrived at this equation, we find, based on
\begin{align}
  \label{eq:514}
    \pp{x}{v}(u,v)&=\rs{\gamma}(v)\,\pp{x}{v}(av,av)\nonumber\\
  &\quad+\rs{\Gamma}(v)\int_{av}^vM(av,v')dv'+\int_{av}^uM(u',v)du',
\end{align}
the estimate
\begin{align}
  \label{eq:515}
  \bigg|\frac{\partial^3x}{\partial v^3}(u,v)\bigg|&\leq C+a\bigg|\frac{\partial^3x}{\partial v^3}(av,av)\bigg|+\overline{C}_1v\sup_{[0,a\varepsilon]}\bigg|\frac{d^3\ls{\alpha}_+}{du^3}\bigg|+\overline{C}_2u\sup_{[0,\varepsilon]}\bigg|\frac{d^3\rs{\beta}_+}{dv^3}\bigg|\nonumber\\
  &\quad +\overline{C}_3\int_{av}^v\bigg|\frac{\partial^3x}{\partial u^3}(av,v')\bigg|dv'+\overline{C}_4\int_{av}^u\bigg|\frac{\partial^3x}{\partial v^3}(u',v)\bigg|du'.
\end{align}
Using \eqref{eq:502}, \eqref{eq:503} we rewrite \eqref{eq:513}, \eqref{eq:515} as
\begin{align}
  \label{eq:516}
  \bigg|\frac{\partial^3x}{\partial u^3}(u,v)\bigg|&\leq C+(a+\overline{C}_1\varepsilon)\sup_{T_\varepsilon}\bigg|\frac{\partial^3x}{\partial u^3}\bigg|+\overline{C}_2\varepsilon\sup_{T_\varepsilon}\bigg|\frac{\partial^3x}{\partial v^3}\bigg|,\\
  \bigg|\frac{\partial^3x}{\partial v^3}(u,v)\bigg|&\leq C+(a+\overline{C}_1\varepsilon)\sup_{T_\varepsilon}\bigg|\frac{\partial^3x}{\partial v^3}\bigg|+\overline{C}_2\varepsilon\sup_{T_\varepsilon}\bigg|\frac{\partial^3x}{\partial u^3}\bigg|.\label{eq:517}
\end{align}
Taking the supremum of \eqref{eq:516} in $T_\varepsilon$ and choosing $\varepsilon$ sufficiently small, such that (recall that $a<1$)
\begin{align}
  \label{eq:518}
  1-(a+\overline{C}_1\varepsilon)>0,
\end{align}
where $\overline{C}_1$ is the constant showing up in \eqref{eq:516}, we deduce
\begin{align}
  \label{eq:519}
  \sup_{T_\varepsilon}\bigg|\frac{\partial^3x}{\partial u^3}\bigg|&\leq C+\overline{C}\varepsilon\sup_{T_\varepsilon}\bigg|\frac{\partial^3x}{\partial v^3}\bigg|.
\end{align}
Similarly, we obtain from \eqref{eq:517}
\begin{align}
  \label{eq:520}
  \sup_{T_\varepsilon}\bigg|\frac{\partial^3x}{\partial v^3}\bigg|&\leq C+\overline{C}\varepsilon\sup_{T_\varepsilon}\bigg|\frac{\partial^3x}{\partial u^3}\bigg|.
\end{align}
Using \eqref{eq:520} in \eqref{eq:519} yields
\begin{align}
  \label{eq:521}
  \sup_{T_\varepsilon}\bigg|\frac{\partial^3x}{\partial u^3}\bigg|&\leq C+\overline{C}\varepsilon\sup_{T_\varepsilon}\bigg|\frac{\partial^3x}{\partial u^3}\bigg|.
\end{align}
Choosing $\varepsilon$ sufficiently small, we obtain
\begin{align}
  \label{eq:522}
  \sup_{T_\varepsilon}\bigg|\frac{\partial^3x}{\partial u^3}\bigg|&\leq C,
\end{align}
which, through \eqref{eq:520}, implies
\begin{align}
  \label{eq:523}
  \sup_{T_\varepsilon}\bigg|\frac{\partial^3x}{\partial v^3}\bigg|&\leq C.
\end{align}
Using the bounds \eqref{eq:522}, \eqref{eq:523} in \eqref{eq:502}, \eqref{eq:503} we obtain
\begin{align}
  \label{eq:524}
\sup_{[0,a\varepsilon]}\bigg|\frac{d^3\ls{\alpha}_+}{du^3}\bigg|,\sup_{[0,\varepsilon]}\bigg|\frac{d^3\rs{\beta}_+}{dv^3}\bigg|\leq C,
\end{align}
i.e.~we have shown that the third order derivative of $\ls{\alpha}_+(u)$, $\rs{\beta}_+(v)$, $x(u,v)$ are bounded. As mentioned above, by $\alpha(u,v)=\ls{\alpha}_+(u)$, $\beta(u,v)=\rs{\beta}_+(v)$ and the fact that the derivatives of $t(u,v)$ can, through the characteristic equations, be expressed in terms of the derivatives of $x(u,v)$, all derivatives of third order of $\alpha$, $\beta$, $t$, $x$ are bounded.

We note again that all the smallness conditions on $\varepsilon$ made in the argument above depend only on constants of the type $\overline{C}$, i.e.~do not depend on the order of derivative studied.
\end{proof}

\section*{Acknowledgments}
The author thanks Anne Franzen for many stimulating discussions on the subject.

\bibliography{lisibach}
\bibliographystyle{amsplain}

\end{document}